\newtheorem{theorem}{Theorem}[section]
\newtheorem{lemma}[theorem]{Lemma}
\newtheorem{proposition}[theorem]{Proposition}
\newtheorem{corollary}[theorem]{Corollary}
\theoremstyle{definition}
\newtheorem{definition}[theorem]{Definition}
\newtheorem{example}[theorem]{Example}
\theoremstyle{remark}
\newtheorem{remark}[theorem]{Remark}
\numberwithin{equation}{section}
\newcommand{\Z}{\mathbb{Z}}
\newcommand{\Q}{\mathbb{Q}}
\newcommand{\C}{\mathcal{C}}
\newcommand{\I}{\mathcal{I}}
\newcommand{\K}{\mathcal{K}}
\newcommand{\M}{\mathcal{M}}
\newcommand{\cc}{\mathfrak{c}}
\renewcommand{\ss}{\mathfrak{s}}
\renewcommand{\S}{\mathfrak{S}}
\newcommand{\A}{\mathcal{A}}
\newcommand{\F}{\mathcal{F}}
\newcommand{\Ztilde}{\widetilde{Z}}
\newcommand{\zz}{\bar{\bar{z}}}
\newcommand{\ZZ}{\overline{\overline{Z}}}
\renewcommand{\sl}{\mathfrak{sl}}
\newcommand{\bu}{\mathfrak{bu}}
\newcommand{\Lk}{\operatorname{Lk}}
\newcommand{\id}{\mathrm{id}}
\newcommand{\Id}{\mathrm{Id}}
\newcommand{\pr}{\mathrm{pr}}
\newcommand{\Sp}{\mathrm{Sp}}
\newcommand{\LCob}{\mathcal{LC}\mathit{ob}}
\newcommand{\tsA}{{}^\mathit{ts}\!\!\mathcal{A}}
\newcommand{\Mag}{\mathrm{Mag}}
\newcommand{\rank}{\operatorname{rank}}
\renewcommand{\Im}{\operatorname{Im}}
\newcommand{\Ker}{\operatorname{Ker}}
\newcommand{\tor}{\operatorname{tor}}
\newcommand{\ideg}{\operatorname{i-deg}}
\newcommand{\ang}[1]{\langle #1 \rangle}
\newcommand{\Ygraph}[4]{%
\begin{tikzpicture}[scale=#1, baseline={(0,-0.1)}, densely dashed]
  \coordinate (origin) at (0,0);
  \draw (origin) -- (-1,1) node[at end, anchor=south] {\Small $#2$};
  \draw (origin) -- (1,1) node[at end, anchor=south] {\Small $#3$};
  \draw (origin) -- (0,-1) node[at end, anchor=north] {\Small $#4$};
\end{tikzpicture}%
}
\newcommand{\Ygraphref}[4]{%
\begin{tikzpicture}[scale=#1, baseline={(0,-0.1)}, densely dashed]
  \coordinate (origin) at (0,0);
  \draw (origin) -- (-1,-1) node[at end, anchor=north] {\Small $#3$};
  \draw (origin) -- (1,-1) node[at end, anchor=north] {\Small $#4$};
  \draw (origin) -- (0,1) node[at end, anchor=south] {\Small $#2$};
\end{tikzpicture}%
}
\newcommand{\lambdagraph}[4]{%
  \begin{tikzpicture}[scale=#1, baseline={(0,-0.1)}, densely dashed]
    \coordinate (origin) at (0,0);
    \draw (origin) -- (-1,-1) node[at end, anchor=north] {\Small $#3$};
    \draw (origin) -- (1,-1) node[at end, anchor=north] {\Small $#4$};
    \draw (origin) -- (0,1) node[at end, anchor=south] {\Small $#2$};
  \end{tikzpicture}%
}
\newcommand{\buYref}[4]{%
  \begin{tikzpicture}[scale=#1, baseline={(0,-0.1)}, densely dashed]
    \coordinate (origin) at (0,0);
    \draw (origin) -- (210:2) node[at end, anchor=north] {\Small $#3$} ;
    \draw (origin) -- (-30:2) node[at end, anchor=north] {\Small $#4$};
    \draw (origin) -- (90:2) node[at end, anchor=south] {\Small $#2$};
    \draw[fill=white] (origin) circle [radius=0.8];
  \end{tikzpicture}%
}
\newcommand{\Hgraph}[4]{%
\begin{tikzpicture}[scale=0.4, baseline={(0,-0.1)}, densely dashed]
  \draw (-1,1) -- (-1,-1) node[at start, anchor=south] {\Small $#1$} node[at end, anchor=north] {\Small $#3$};
  \draw (1,1) -- (1,-1) node[at start, anchor=south] {\Small $#2$} node[at end, anchor=north] {\Small $#4$};
  \draw (-1,0) -- (1,0) ;
\end{tikzpicture}%
}
\newcommand{\YYgraph}[4]{%
\begin{tikzpicture}[scale=0.3, baseline={(0,0.6)}, densely dashed]
  \draw (1,1) -- (4,4) node[at start, anchor=north] {\Small $#4$} node[at end, anchor=south] {\Small $#3$};
  \draw (2,2) -- (0,4) node[anchor=south] {\Small $#1$};
  \draw (3,3) -- (2,4) node[anchor=south] {\Small $#2$};
\end{tikzpicture}%
}
\newcommand{\YYgraphref}[4]{%
\begin{tikzpicture}[scale=0.3, baseline={(0,0.6)}, densely dashed]
  \draw (3,1) -- (0,4) node[at start, anchor=north] {\Small $#4$} node[at end, anchor=south] {\Small $#1$};
  \draw (2,2) -- (4,4) node[anchor=south] {\Small $#3$};
  \draw (1,3) -- (2,4) node[anchor=south] {\Small $#2$};
\end{tikzpicture}%
}
\newcommand{\lamlam}[4]{%
\begin{tikzpicture}[scale=0.3, baseline={(0,0.4)}, densely dashed]
  \draw (1,3) -- (4,0) node[at start, anchor=south] {\Small $#1$} node[at end, anchor=north] {\Small $#4$};
  \draw (2,2) -- (0,0) node[anchor=north] {\Small $#2$};
  \draw (3,1) -- (2,0) node[anchor=north] {\Small $#3$};
\end{tikzpicture}%
}
\newcommand{\lamlamref}[4]{%
\begin{tikzpicture}[scale=0.3, baseline={(0,0.4)}, densely dashed]
  \draw (3,3) -- (0,0) node[at start, anchor=south] {\Small $#1$} node[at end, anchor=north] {\Small $#2$};
  \draw (1,1) -- (2,0) node[anchor=north] {\Small $#3$};
  \draw (2,2) -- (4,0) node[anchor=north] {\Small $#4$};
\end{tikzpicture}%
}
\newcommand{\dumbbell}[2]{%
\begin{tikzpicture}[scale=0.3, baseline={(0,-0.3)}, densely dashed]
  \draw (0,0) circle [radius=1];
  \draw (-1,0) .. controls +(-1,0) and +(0,1) .. (-2,-2) node[anchor=north] {\Small $#1$};
  \draw (1,0) .. controls +(1,0) and +(0,1) .. (2,-2) node[anchor=north] {\Small $#2$};
\end{tikzpicture}%
}
\newcommand{\Hbottom}[4]{%
\begin{tikzpicture}[scale=0.5, baseline={(0,0.4)}, densely dashed]
  \draw (-1,0) arc (180:0:1) node[at start, anchor=north] {\Small $#2$} node[at end, anchor=north] {\Small $#3$};
  \draw (-2,0) arc (180:0:2) node[at start, anchor=north] {\Small $#1$} node[at end, anchor=north] {\Small $#4$};
  \draw (0,1) -- (0,2);
\end{tikzpicture}%
}
\newcommand{\Ybottom}[3]{%
\begin{tikzpicture}[scale=0.5, baseline={(0,0.2)}, densely dashed]
  \draw (-1,0) arc (180:0:1) node[at start, anchor=north] {\Small $#1$} node[at end, anchor=north] {\Small $#3$};
  \draw (0,1) -- (0,0) node[anchor=north] {\Small $#2$};
\end{tikzpicture}%
}
\newcommand{\comb}[4]{%
\begin{tikzpicture}[scale=0.5, baseline={(0,0.4)}, densely dashed]
  \draw (0,0) arc (180:0:1.5) node[at start, anchor=north] {\Small $#1$} node[at end, anchor=north] {\Small $#4$};
  \draw (1,1.4) -- (1,0) node[anchor=north] {\Small $#2$};
  \draw (2,1.4) -- (2,0) node[anchor=north] {\Small $#3$};
\end{tikzpicture}%
}
\newcommand{\strutbottom}[2]{%
\begin{tikzpicture}[scale=0.3, baseline={(0,0.2)}, densely dashed]
  \draw (0,0) arc (180:0:1) node[at start, anchor=north] {\Small $#1$} node[at end, anchor=north] {\Small $#2$};
\end{tikzpicture}%
}
\newcommand{\strutgraph}[2]{%
\begin{tikzpicture}[scale=0.7, baseline={(0,0.3)}, densely dashed]
  \draw (0,1) -- (0,0) node[at start, anchor=south] {\Small $#1$} node[at end, anchor=north] {\Small $#2$};
\end{tikzpicture}%
}
\newcommand{\strutt}[2]{%
\begin{tikzpicture}[scale=0.4, baseline={(0,0.4)}, densely dashed]
  \draw (0,0) -- (0,2) node[anchor=south] {\Small $#1$};
  \draw (2,0) -- (2,2) node[anchor=south] {\Small $#2$};
\end{tikzpicture}%
}
\newcommand{\struttt}[3]{%
\begin{tikzpicture}[scale=0.4, baseline={(0,0.4)}, densely dashed]
  \draw (0,0) -- (0,2) node[anchor=south] {\Small $#1$};
  \draw (2,0) -- (2,2) node[anchor=south] {\Small $#2$};
  \draw (4,0) -- (4,2) node[anchor=south] {\Small $#3$};
\end{tikzpicture}%
}
\newcommand{\yokostrut}[1]{%
\begin{tikzpicture}[scale=0.4, baseline={(0,0)}, densely dashed]
  \draw (0,0) -- (2,0) node[at start, anchor=south] {\Small $#1$};
\end{tikzpicture}%
}
\newcommand{\Tref}[1]{%
\begin{tikzpicture}[scale=0.4, baseline={(0,0.4)}, densely dashed]
  \draw (0,0) -- (0,2) node[anchor=south] {\Small $#1$};
  \draw (-1,0) -- (1,0);
\end{tikzpicture}%
}
\newcommand{\Ytop}[3]{%
\begin{tikzpicture}[scale=0.5, baseline={(0,-0.3)}, densely dashed]
  \draw (-1,0) arc (180:360:1) node[at start, anchor=south] {\Small $#1$} node[at end, anchor=south] {\Small $#3$};
  \draw (0,-1) -- (0,0) node[anchor=south] {\Small $#2$};
\end{tikzpicture}%
}
\newcommand{\Htop}[4]{%
\begin{tikzpicture}[scale=0.3, baseline={(0,-0.4)}, densely dashed]
  \draw (2,0) arc (180:360:1) node[at start, anchor=south] {\Small $#2$} node[at end, anchor=south] {\Small $#3$};
  \draw (0,0) arc (180:360:3) node[at start, anchor=south] {\Small $#1$} node[at end, anchor=south] {\Small $#4$};
  \draw (3,-1) -- (3,-3);
\end{tikzpicture}%
}
\newcommand{\buYtop}[3]{%
  \begin{tikzpicture}[scale=0.3, baseline={(0,0.1)}, densely dashed]
    \draw (0,0) -- (-2,2) node[at end, anchor=south] {\Small $#1$} ;
    \draw (0,0) -- (0,2) node[at end, anchor=south] {\Small $#2$};
    \draw (0,0) -- (2,2) node[at end, anchor=south] {\Small $#3$};
    \draw[fill=white] (0,0) circle [radius=1];
  \end{tikzpicture}%
}
\newcommand{\HHtop}[5]{%
\begin{tikzpicture}[scale=0.5, baseline={(0,-0.5)}, densely dashed]
  \draw (0,-1) -- (0,0) node[anchor=south] {\Small $#1$};
  \draw (1,-1) -- (1,0) node[anchor=south] {\Small $#2$};
  \draw (2,-1) -- (2,0) node[anchor=south] {\Small $#3$};
  \draw (3,-1) -- (3,0) node[anchor=south] {\Small $#4$};
  \draw (4,-1) -- (4,0) node[anchor=south] {\Small $#5$};
  \draw (0,-1) -- (2,-1);
  \draw (0,-1) .. controls +(0,-2) and +(0,-2) .. (4,-1);
  \draw (2,-1) .. controls +(0,-1) and +(0,-1) .. (3,-1);
\end{tikzpicture}%
}
\newcommand{\phigraph}[2]{%
\begin{tikzpicture}[scale=0.3, baseline={(0,-0.1)}, densely dashed]
  \draw (0,0) circle [radius=1];
  \draw (0,1) -- (0,2) node[anchor=south] {\Small $#1$};
  \draw (0,-1) -- (0,-2) node[anchor=north] {\Small $#2$};
\end{tikzpicture}%
}
\newcommand{\Igraph}[2]{%
\begin{tikzpicture}[scale=0.3, baseline={(0,0.4)}, densely dashed]
  \draw (0,0) -- (1,1);
  \draw (2,0) -- (1,1);
  \draw (1,1) -- (1,3);
  \draw (-0.5,3) -- (2.5,3) node[at start, anchor=south] {\Small $#1$} node[at end, anchor=south] {\Small $#2$};
\end{tikzpicture}%
}
\newcommand{\Pone}[2]{%
\begin{tikzpicture}[scale=0.25, baseline={(0,-0.2)}, densely dashed]
 \draw (0,0) circle [radius=4];
 \draw (0,-1)--(0,1);
 \draw (0,1)--(45:4);
 \draw (0,1)--(135:4);
 \draw (0,-1)--(-45:4);
 \draw (0,-1)--(-135:4);
 \draw (0,4)--(0,6) node[anchor=west] {\Small $#1$};
 \draw (1.5,2)--(2.5,0.5) node[anchor=north] {\Small $#2$};
\end{tikzpicture}
}
\newcommand{\Ptwo}[2]{%
\begin{tikzpicture}[scale=0.25, baseline={(0,-0.2)}, densely dashed]
 \draw (0,0) circle [radius=4];
 \draw (0,-1)--(0,1);
 \draw (0,1)--(45:4);
 \draw (0,1)--(135:4);
 \draw (0,-1)--(-45:4);
 \draw (0,-1)--(-135:4);
 \draw (0,4)--(0,6) node[anchor=west] {\Small $#1$};
 \draw (0,-4)--(0,-6) node[anchor=west] {\Small $#2$};
\end{tikzpicture}
}
\newcommand{\Pthree}[2]{%
\begin{tikzpicture}[scale=0.25, baseline={(0,-0.2)}, densely dashed]
 \draw (0,0) circle [radius=4];
 \draw (0,-1)--(0,1);
 \draw (0,1)--(45:4);
 \draw (0,1)--(135:4);
 \draw (0,-1)--(-45:4);
 \draw (0,-1)--(-135:4);
 \draw (0,4)--(0,6) node[anchor=west] {\Small $#1$};
 \draw (1.5,-2)--(2.5,-0.5) node[anchor=south] {\Small $#2$};
\end{tikzpicture}
}
\newcommand{\Pfour}[2]{%
\begin{tikzpicture}[scale=0.25, baseline={(0,-0.2)}, densely dashed]
 \draw (0,0) circle [radius=4];
 \draw (0,-1)--(0,1);
 \draw (0,1)--(45:4);
 \draw (0,1)--(135:4);
 \draw (0,-1)--(-45:4);
 \draw (0,-1)--(-135:4);
 \draw (0,4)--(0,6) node[anchor=west] {\Small $#1$};
 \draw (0,0)--(2,0) node[anchor=west] {\Small $#2$};
\end{tikzpicture}
}
\title[]{Torsion elements in the associated graded of the $Y$-filtration of the monoid of homology cylinders}
\author{Yuta Nozaki}
\address{
Faculty of Environment and Information Sciences, Yokohama National University \\
79-7 Tokiwadai, Hodogaya-ku, Yokohama, 240-8501 \\
Japan\vspace{-0.6em}}
\address{
WPI-SKCM$^2$, Hiroshima University \\
1-3-1 Kagamiyama, Higashi-Hiroshima, Hiroshima, 739-8526 \\
Japan}
\email{nozaki-yuta-vn@ynu.ac.jp}
\author{Masatoshi Sato}
\address{
Department of Mathematics and Data Science\\
Tokyo Denki University \\
5 Senjuasahi-cho, Adachi-ku, Tokyo 120-8551 \\
Japan}
\email{msato@mail.dendai.ac.jp}
\author{Masaaki Suzuki}
\address{Department of Frontier Media Science, Meiji University \\
4-21-1 Nakano, Nakano-ku, Tokyo, 164-8525 \\
Japan}
\email{mackysuzuki@meiji.ac.jp}
\subjclass[2020]{Primary 57K16, 57K20, Secondary 57K31}
\keywords{Torelli group, homology cylinder, LMO functor, clasper surgery}
\begin{document}
\maketitle

\begin{abstract}
Clasper surgery induces the $Y$-filtration $\{Y_n\mathcal{IC}\}_n$ over the monoid of homology cylinders, which serves as a $3$-dimensional analogue of the lower central series of the Torelli group of a surface.
In this paper, we investigate the torsion submodules of the associated graded modules of these filtrations.
To detect torsion elements, we introduce a homomorphism on $Y_n\mathcal{IC}/Y_{n+1}$ induced by the degree $n+2$ part of the LMO functor.
Additionally, we provide a formula that computes this homomorphism under clasper surgery, and use it to demonstrate that every non-trivial torsion element in $Y_6\mathcal{IC}/Y_7$ has order $3$.
\end{abstract}

\setcounter{tocdepth}{1}
\tableofcontents

\section{Introduction}
\label{sec:Introduction}
Let $\Sigma_{g,1}$ be a connected oriented compact surface of genus $g$ with one boundary component and let $\M=\M_{g,1}$ denote the mapping class group of $\Sigma_{g,1}$.
The mapping class group naturally acts on the first homology group $H_1(\Sigma_{g,1};\Z)$ and its kernel $\I=\I_{g,1}$ is called the Torelli group, which plays a central role in the study of $\M$ and the associated graded module $\bigoplus_{n=1}^\infty (\I(n)/\I(n+1))\otimes_{\Z}\Q$ is of particular interest.
Here, $\{\I(n)\}_n$ denotes the lower central series defined by $\I(n)=[\I(n-1),\I]$ and $\I(1)=\I$.

In \cite[Theorem~3]{Joh85III}, Johnson determined the abelianization $\I/\I(2)$ of $\I$ as an $\Sp(2g,\Z)$-module for $g\geq 3$.
Let $\tau_n\colon \I(n)/\I(n+1)\to H\otimes L_{n+1}$ denote the $n$th Johnson homomorphism, where $L_n$ denotes the degree $n$ part of the free Lie algebra generated by $H=H_1(\Sigma_{g,1};\Z)$.
In \cite[Theorem~10.1]{Hai97}, Hain determined $(\I(2)/\I(3))\otimes \Q$ for $g\geq 3$ and showed that the kernel of the induced homomorphism
\[
\tau_2\otimes\id_\Q\colon (\I(2)/\I(3))\otimes \Q\to (H\otimes L_3)\otimes \Q
\]
is of rank $1$, which is detected by the Casson invariant as explained in \cite{Mor89}.
He also gave a presentation of the associated graded Lie algebra $\bigoplus_{n=1}^\infty\I(n)/\I(n+1)\otimes\Q$ in \cite[Theorem~11.1]{Hai97}.
For $g\geq 6$, the $\Sp(2g,\Q)$-module $\I(3)/\I(4)\otimes\Q$ was determined by Morita~\cite[Proposition~6.3]{Mor99}.
Furthermore, Morita, Sakasai, and the third author~\cite[Theorem~1.2]{MSS20} proved that $\tau_n\otimes\id_\Q$ is an isomorphism when $n=4,5,6$ and $g$ is large enough.
Kupers and Randal-Williams~\cite[Theorem~B]{KuRW23} recently showed that the kernel of
\[
\tau_n\otimes\id_\Q\colon (\I(n)/\I(n+1))\otimes \Q\to (H\otimes L_{n+1})\otimes \Q
\]
is a trivial $\Sp(2g,\Q)$-module when $g\geq 3n$.
When $n\le 6$, it can also be proven by comparing the irreducible decompositions of the Torelli Lie algebra as an $\Sp(2g,\Q)$-representation in \cite[Section~7]{GaGe17} and of the images of the Johnson homomorphisms in \cite[Table~1]{MSS15AM}.

We next turn our attention to the torsion subgroup $\tor(\I(n)/\I(n+1))$.
As is well known, there are torsion elements of order $2$ in the abelianization $\I(1)/\I(2)$ detected by the Birman-Craggs homomorphisms.
On the other hand, $\I(2)/\I(3)$ was recently shown to be torsion-free in \cite{FMS24}.
Therefore, the existence of torsion elements in $\I(n)/\I(n+1)$ is a subtle problem.
In \cite{NSS22GT}, the authors proved that $\tor(\I(n)/\I(n+1))$ is non-trivial if $n=3,5$ and $g\geq n$.
Combining an argument in \cite{NSS22GT} with \cite[Theorem~B]{KuRW23} mentioned above, we prove the following stronger result in Section~\ref{subsec:Torelli}.

\begin{theorem}
\label{thm:tor_gr_Torelli}
When $n$ is odd and $g\ge3n$,
$\tor(\I(n)/\I(n+1))$ is non-trivial.
\end{theorem}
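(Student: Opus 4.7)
The plan is to extend the construction of \cite{NSS22GT}, which produced non-trivial torsion in $\I(n)/\I(n+1)$ for $n=3,5$, to all odd $n$ with $g\ge 3n$ by combining it with an $\Sp$-equivariance argument based on \cite[Theorem~B]{KuRW23}. Concretely, the goal is to exhibit an element $\phi_n\in\I(n)$ with $\tau_n(\phi_n)=0$ and a symplectic transformation $\sigma\in\Sp(2g,\Z)$ such that $\sigma\cdot[\phi_n]-[\phi_n]\ne 0$ in $\I(n)/\I(n+1)$; the Kupers--Randal-Williams rigidity then automatically forces this difference to be torsion.

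First, I will construct $\phi_n$ by clasper surgery along an explicit symmetric graph of degree $n$, arranged so that $\tau_n(\phi_n)=0$ essentially by degree. Next, I will produce $\sigma$ and a $\Z/p$-valued invariant $\nu$ on $\I(n)/\I(n+1)$ (obtained by reducing the degree $n+2$ part of the LMO functor modulo a prime $p$, as introduced later in this paper) with $\nu(\sigma\cdot[\phi_n])\ne\nu([\phi_n])$, so that $\sigma\cdot[\phi_n]-[\phi_n]$ is non-zero in $\I(n)/\I(n+1)$. For $n=3,5$, such a triple is essentially what is built in \cite{NSS22GT}; for general odd $n$, the explicit clasper-surgery formula for the degree $n+2$ LMO functor developed in this paper provides the tool to verify the non-vanishing uniformly in $n$. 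Finally, by \cite[Theorem~B]{KuRW23}, the kernel $\ker(\tau_n\otimes\id_\Q)$ is a trivial $\Sp(2g,\Q)$-module for $g\ge 3n$; since $[\phi_n]\otimes\id_\Q$ lies in this kernel, the action of $\sigma$ fixes it, so $(\sigma\cdot[\phi_n]-[\phi_n])\otimes\id_\Q=0$ in $(\I(n)/\I(n+1))\otimes\Q$. Combined with its non-vanishing from the previous step, $\sigma\cdot[\phi_n]-[\phi_n]$ is a non-trivial element of $\tor(\I(n)/\I(n+1))$.

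The main obstacle is the uniform construction of the triple $(\phi_n,\sigma,\nu)$ for all odd $n$: one has to choose a clasper graph of degree $n$, a symplectic transformation $\sigma$, and a Jacobi-diagrammatic invariant at degree $n+2$ whose interaction is controlled well enough to produce the required non-vanishing $\nu(\sigma\cdot[\phi_n]-[\phi_n])\ne 0$. This is exactly the detection step that restricted \cite{NSS22GT} to $n=3,5$; here it is unlocked for all odd $n$ by the clasper-surgery formula for the degree $n+2$ LMO functor, while the rational torsion conclusion is handed over to \cite[Theorem~B]{KuRW23} via the $\Sp$-equivariance argument above.
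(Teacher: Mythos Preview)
Your overall architecture is exactly right and matches the paper: produce $\phi\in\I(n)$ with $\tau_n(\phi)=0$, act by some $\sigma$ coming from $\Sp(2g,\Z)$, detect $\sigma\cdot[\phi]-[\phi]\ne0$ with an invariant coming from the LMO functor, and then invoke \cite[Theorem~B]{KuRW23} to force this difference to be rationally zero, hence torsion.

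Where you go astray is in identifying the detection tool and, relatedly, in diagnosing why \cite{NSS22GT} was limited to $n=3,5$. The paper does \emph{not} use the degree $n+2$ map $\zz_{n+2}$ here; it uses the degree $n+1$ map $\bar z_{n+1}$ already constructed in \cite{NSS22GT}. Crucially, the non-vanishing you need was already established there for \emph{every} odd $n$: by \cite[Theorem~1.2]{NSS22GT} the composition $\bar z_{2n}\circ\cc_{2n-1}$ is non-trivial, and the explicit formula \cite[Theorem~1.1]{NSS22GT} applied to $x=O(1^+,2^+,\ldots,n^+,\ldots,2^+,1^+)$ and its cyclic shift $y$ gives $\bar z_{2n}(\ss_{2n-1}(x))\ne\bar z_{2n}(\ss_{2n-1}(y))$. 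So the detection step was never the bottleneck. What restricted \cite{NSS22GT} to $n=3,5$ was the \emph{torsion} conclusion, i.e.\ knowing that $\Ker(\tau_n\otimes\id_\Q)$ is $\Sp$-trivial; that is precisely the input now supplied by \cite{KuRW23} for $g\ge3n$.

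Consequently, your plan to ``unlock'' general odd $n$ via the new clasper formula for $\zz_{n+2}$ is both unnecessary and, as written, incomplete: you never actually verify the required non-vanishing at degree $n+2$, and the map $\zz_{n+2}$ takes values in $\A^c_{n+2}\otimes\Q/\tfrac12\Z$, not in $\Z/p\Z$ as you describe. Replace the proposed $\nu$ by $\bar z_{n+1}$, take $\phi=\varphi$ with $\cc_{n}(\varphi)=\ss_n(x)$ (which exists by \cite[Lemma~6.2]{NSS22GT} and lies in $\Ker\tau_n$), and let $\sigma$ be conjugation by a mapping class cyclically shifting the $\beta_i$'s; then the detection is already done in \cite{NSS22GT}, and your KuRW step finishes the proof exactly as the paper does.
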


The key idea of \cite{NSS22GT} is to consider the monoid $\I\C=\I\C_{g,1}$ of homology cylinders over $\Sigma_{g,1}$.
A homology cylinder is a certain $3$-manifold with boundary and $\I\C$ can be regarded as a $3$-dimensional analogue of the Torelli group via a natural injective monoid homomorphism $\cc\colon \I\hookrightarrow \I\C$.
Goussarov~\cite{Gou99} and Habiro~\cite{Hab00C} independently introduced clasper surgery to study finite-type invariants of links and $3$-manifolds.
In particular, they introduced the $Y_n$-equivalence relation among homology cylinders and defined $Y_n\I\C$ as the submonoid of $\I\C$ consisting of homology cylinders being $Y_n$-equivalent to the trivial one.
Then we have the $Y$-filtration $\{Y_n\I\C\}_n$ on $\I\C$, which plays the role of the lower central series of $\I$.
More precisely, $\cc$ restricts to $\I(n) \to Y_n\I\C$ and induces a homomorphism $\cc_n\colon\I(n)/\I(n+1) \to Y_n\I\C/Y_{n+1}$ between abelian groups.

Goussarov and Habiro also observed that there is a surjective homomorphism $\ss_n\colon \A^c_n\to Y_n\I\C/Y_{n+1}$ induced by clasper surgery when $n\geq 2$.
Here, $\A^c_n$ is a $\Z$-module of connected Jacobi diagrams with $n$ trivalent vertices.
Since $\A^c_n$ is a purely combinatorial object, it suffices to determine the kernel of $\ss_n$ to reveal the group structure of $Y_n\I\C/Y_{n+1}$.
This strategy works well for small $n$.
In fact, $Y_n\I\C/Y_{n+1}$ is determined for $n=1,2$ by Massuyeau and Meilhan~\cite{MaMe03,MaMe13} and for $n=3,4$ by the authors~\cite{NSS22GT,NSS22JT}.
As a corollary, the Goussarov-Habiro conjecture is true for the $Y_{n+1}$-equivalence when $n\leq 4$, and therefore $Y_n\I\C/Y_{n+1}$ attracts considerable attention.
We refer the reader to \cite[Section~3.5]{Mas21} and \cite{HaMa12} for a survey.
In this paper, we partially investigate $Y_n\I\C/Y_{n+1}$ for $n=5,6,7$ in Section~\ref{sec:Yn/Yn+1}.

Cheptea, Habiro, and Massuyeau~\cite{CHM08} constructed the LMO functor as an extension of the Le-Murakami-Ohtsuki invariant~\cite{LMO98} of closed $3$-manifolds to certain $3$-dimensional cobordisms.
As an application, they proved that the surgery map $\ss_n$ is an isomorphism over $\Q$ for $n\geq 1$, while $\ss_n$ itself is not necessarily injective.
This implies that the kernel $\Ker \ss_n$ is contained in the torsion subgroup $\tor \A^c_n$, and thus it seems to be difficult to detect non-trivial elements of $\Ker \ss_n$, let alone determine $\Ker \ss_n$ for large $n$.
Conant, Schneiderman, and Teichner~\cite{CST16} studied the homology cobordism group of homology cylinders, and as a consequence, they revealed that $Y_n\I\C/Y_{n+1}$ has torsion elements of order $2$ when $n$ is odd.
The authors also found torsion elements of order $2$ in \cite{NSS22GT,NSS22JT}.
The key ingredient of \cite{NSS22GT,NSS22JT} is a homomorphism $\bar{z}_{n+1}\colon Y_n\I\C/Y_{n+1}\to \A^c_{n+1}\otimes\Q/\Z$ induced by the degree $n+1$ term of the LMO functor.
A formula of $\bar{z}_{n+1}$ for clasper surgery is also given in \cite{NSS22GT}, which enables us to detect torsion elements of order $2$.

In this paper, we introduce a homomorphism
\[
\zz_{n+2}\colon Y_n\I\C/Y_{n+1}\to \A^c_{n+2}\otimes\Q/\tfrac{1}{2}\Z
\]
induced by the degree $n+2$ term of the LMO functor and give a formula of $\zz_{n+2}$ for clasper surgery in Theorem~\ref{thm:formula}.
As an application, we can find torsion elements with completely different properties from those previously found.
Recall here that the non-triviality of $\tor(Y_n\I\C/Y_{n+1})$ is known only for odd integers $n\geq1$ and that the orders of torsion elements are even. 
Then, it is natural to ask about the existence of torsion elements of odd order and the existence of torsions in $Y_n\I\C/Y_{n+1}$ with $n$ even.
The next consequence of Theorem~\ref{thm:formula} answers both of the questions affirmatively.

\begin{theorem}
\label{thm:Y6/Y7}
The abelian group $\tor(Y_6\I\C/Y_7)$ is isomorphic to $(\Z/3\Z)^r$, where $g\geq 0$ and $\binom{2g}{2}\leq r\leq 4g^2$.
\end{theorem}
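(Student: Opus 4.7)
The plan is to analyze $\tor(Y_6\I\C/Y_7)$ through the surgery map. Since $\ss_6 \colon \A^c_6 \twoheadrightarrow Y_6\I\C/Y_7$ is surjective and rationally an isomorphism by \cite{CHM08}, its kernel $K$ lies in $\tor\A^c_6$, so
\[
\tor(Y_6\I\C/Y_7) \;\cong\; \tor\A^c_6 / K.
\]
Writing $\tor\A^c_6 = T_2\oplus T_3\oplus\cdots$ for the primary decomposition, the problem therefore splits into (i) computing each $T_p$ and (ii) identifying the intersection $K_p = K\cap T_p$.

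For the upper bound $r\le 4g^2$, I would classify $\tor\A^c_6$ by a direct computation in the diagram algebra, enumerating the connected degree-$6$ Jacobi diagrams modulo AS and IHX. One expects only the primes $2$ and $3$ to appear, with the $3$-primary part $T_3$ elementary abelian of rank at most $4g^2$, generated by diagrams carrying an order-$3$ automorphism labeled by pairs of symplectic basis elements of $H=H_1(\Sigma_{g,1};\Z)$. To discard the $2$-primary contribution, I would show $T_2\subseteq K$: the $2$-torsion constructions of \cite{CST16,NSS22GT,NSS22JT} rely on $n$ being odd, and one should verify directly, using the homomorphism $\bar{z}_7$ of \cite{NSS22GT} and the known surgery formula for it, that $\bar{z}_7$ vanishes on the generators of $T_2$, so every $2$-primary class of $\A^c_6$ is killed by $\ss_6$. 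Combined with the elementary-abelian structure of $T_3$ this gives $\tor(Y_6\I\C/Y_7)\cong (\Z/3\Z)^r$ with $r\le 4g^2$.

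For the lower bound $r\ge \binom{2g}{2}$, I would use the homomorphism $\zz_8 \colon Y_6\I\C/Y_7 \to \A^c_8 \otimes \Q/\tfrac{1}{2}\Z$ together with the clasper-surgery formula of Theorem~\ref{thm:formula}. Fix a symplectic basis $\{a_1,b_1,\ldots,a_g,b_g\}$ of $H$ and, for each pair $1\le i<j\le 2g$, construct an explicit degree-$6$ connected Jacobi diagram $D_{ij}$ whose leaves are labeled by these basis elements. The surgery formula expresses $\zz_8(\ss_6(D_{ij}))$ as a $\frac{1}{3}\Z$-linear combination of degree-$8$ diagrams in $\A^c_8\otimes\Q/\tfrac{1}{2}\Z$; choosing the family $\{D_{ij}\}$ so that these combinations are linearly independent modulo $3$ would show that the $\ss_6(D_{ij})$ are $\binom{2g}{2}$ independent elements of order exactly $3$ in $Y_6\I\C/Y_7$.

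The principal obstacle will be the evaluation of the surgery formula on the test family $\{D_{ij}\}$: Theorem~\ref{thm:formula} produces a potentially large sum of degree-$8$ diagrams, and a judicious choice of $D_{ij}$ combined with substantial IHX/AS reductions is needed to ensure the resulting images stay independent in $\A^c_8 \otimes \Q/\tfrac{1}{2}\Z$. The complementary upper-bound step, namely bounding $\dim T_3 \le 4g^2$ and checking the vanishing of $\bar{z}_7$ on $T_2$, is a finite but intricate diagram computation that I expect to carry out by decomposing $\A^c_6$ into the $\Sp(2g,\Z)$-orbits of its underlying uncolored graphs and treating each orbit separately.
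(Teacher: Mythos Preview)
Your lower-bound strategy is essentially the paper's: feed explicit $3$-torsion generators of $\A^c_{6,2}$ (the paper uses $\theta(a,b;a;a)$ for distinct $a,b$) through $\zz_{8}$, project to $\A^c_{8,4}(a,b)$, and check non-vanishing there using the formula. The paper makes this tractable by first proving $\A^c_{8,4}(a,b)\cong\Z^2$ with an explicit basis, so the image $\frac{1}{6}(P_1(a,b)-P_2(a,b))$ is visibly non-zero in $\Q/\tfrac12\Z$; you will want a comparable structural result on the target rather than raw IHX/AS manipulation.

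The upper-bound argument has a genuine gap. You propose to eliminate a putative $2$-primary summand $T_2\subset\tor\A^c_6$ by checking that $\bar z_7$ vanishes on its generators and concluding ``every $2$-primary class of $\A^c_6$ is killed by $\ss_6$.'' That implication is false: $\bar z_7$ is only one homomorphism out of $Y_6\I\C/Y_7$, and $\bar z_7(\ss_6(x))=0$ says nothing more than that $\ss_6(x)$ lies in $\Ker\bar z_7$. To place $x$ in $\Ker\ss_6$ you would need either an explicit $Y_7$-equivalence at the level of claspers or a complete set of invariants for $Y_6\I\C/Y_7$, neither of which is available. The same objection applies to any attempt to bound $T_3$ from above via invariants alone.

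The paper sidesteps this entirely: it computes $\tor\A^c_6$ directly and finds $T_2=0$. Concretely, $\A^c_{6,l}$ is shown to be free for $l\neq 2$ (using \cite{CST12L} for trees, \cite{NSS22GT} for $l=1$, and short spine arguments plus a weight-system check for $l=3,4$), and a presentation/Smith-normal-form computation on $\A^c_{6,2}$ yields $\tor\A^c_{6,2}\cong(\Z/3\Z)^{4g^2}$, generated by the $\theta(a,b;a;a)$. Thus $\tor\A^c_6$ is already elementary abelian $3$-torsion of rank $4g^2$, giving the bound $r\le 4g^2$ without ever needing to show anything lies in $\Ker\ss_6$. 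Your plan should replace the $\bar z_7$ step with this direct computation.
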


We also investigate the structure of the kernel $\Ker\ss_n$ of the surgery map $\ss_n$.
To study $\Ker\ss_n$, it is convenient to use the decomposition $\A^c_n = \bigoplus_{l\geq 0}\A^c_{n,l}$ with respect to the first Betti number $l$ of Jacobi diagrams.
For instance, in \cite{NSS22GT,NSS22JT}, it works very well for small $n$.
Indeed, the inclusion $\bigoplus_{l\geq 0}\Ker\ss_{n,l} \subset \Ker\ss_{n}$ is an equality if $n\leq 4$.
On the other hand, we show that the above decomposition is not enough to study $\Ker\ss_n$.

\begin{theorem}
\label{thm:deg-by-deg}
When $g\geq 1$, the inclusion $\bigoplus_{l\geq 0}\Ker\ss_{7,l} \subset \Ker\ss_{7}$ is strict.
In fact,
for distinct $a,b\in \{1^\pm, \dots, g^\pm\}$,
\[
O(a,a,a,b,a,a,a)+O(b,a,a,a,a,a,b)+\theta(a;a;a,b,a)+\theta(a,a,a;a;b)
\]
lies in the gap, where $O(a_1,a_2,a_3,\ldots, a_n)$ and $\theta(a_1,\dots,a_p;b_1,\dots,b_q;c_1,\dots,c_r)$ are respectively Jacobi diagrams
\[
\begin{tikzpicture}[scale=0.25, baseline={(0,0.1)}, densely dashed]
 \draw (0,0) circle [radius=2];
 \draw (90:2)--(90:4) node[anchor=south] {$a_1$};
 \draw (50:2)--(50:4) node[anchor=south west] {$a_2$};
 \draw (10:2)--(10:4) node[anchor=west] {$a_3$};
 \draw[loosely dotted, very thick] (-20:4) arc (-20:-50:4);
 \draw (130:2)--(130:4) node[anchor=south east] {$a_n$};
 \draw[loosely dotted, very thick] (160:4) arc (160:190:4);
\end{tikzpicture}
\qquad and \qquad
\begin{tikzpicture}[scale=0.3, baseline={(0,-0.2)}, densely dashed]
 \draw (0,0) circle [radius=3];
 \draw (-2.8,-1) -- (2.8,-1);
 \draw (135:3) -- (135:4) node[anchor=south] {$a_1$};
 \node at (0,4) {$\cdots$};
 \draw (45:3) -- (45:4) node[anchor=south] {$a_p$};
 \draw (-1.5,-1) -- (-1.5,0) node[anchor=south] {$b_1$};
 \node at (0,0) {$\cdots$};
 \draw (1.5,-1) -- (1.5,0) node[anchor=south] {$b_q$};
 \draw (-135:3) -- (-135:4) node[anchor=north] {$c_1$};
 \node at (0,-4) {$\cdots$};
 \draw (-45:3) -- (-45:4) node[anchor=north] {$c_r$};
\end{tikzpicture}
\]
for $a_i, b_j, c_k \in \{1^\pm,\ldots,g^\pm\}$.
\end{theorem}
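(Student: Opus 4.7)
The plan is to exhibit the element
\[
E = O(a,a,a,b,a,a,a) + O(b,a,a,a,a,a,b) + \theta(a;a;a,b,a) + \theta(a,a,a;a;b)
\]
as a witness to the strict inclusion. Writing $E = w + t$ with $w = O(a,a,a,b,a,a,a) + O(b,a,a,a,a,a,b) \in \A^c_{7,1}$ and $t = \theta(a;a;a,b,a) + \theta(a,a,a;a;b) \in \A^c_{7,2}$, it suffices to prove $\ss_7(E) = 0$ in $Y_7\I\C/Y_8$ together with $\ss_{7,1}(w) \neq 0$; the first identity then automatically forces $\ss_{7,2}(t) = -\ss_{7,1}(w) \neq 0$ as well.

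For $\ss_7(E) = 0$ I would realize each Jacobi diagram in $E$ as a connected clasper on the trivial homology cylinder over $\Sigma_{g,1}$ with leaves representing the homology classes $a$ and $b$. The aim is to produce a single clasper configuration---exploiting the heavy repetition of the label $a$ by a leaf doubling or a leaf-permutation move---that is $Y_8$-equivalent to the trivial cylinder, and whose expansion via leaf-splitting and the Goussarov--Habiro moves collapses, after reducing with the AS and IHX relations in $\A^c_7$, to exactly $E$. Since the expansion inevitably generates both wheel-type and theta-type contributions, the resulting identity genuinely mixes loop degrees; the correctness of the coefficients and the cancellation of all other terms would be certified by applying the clasper-surgery formula of Theorem~\ref{thm:formula} to $\zz_9$.

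For $\ss_{7,1}(w) \neq 0$ a torsion-detecting invariant is required, since both wheels appearing in $w$ have palindromic label sequences and the AS relation in odd degree $7$ therefore forces $2w = 0$ already in $\A^c_{7,1}$, so $w$ is annihilated by the rational isomorphism $\ss_7 \otimes \id_\Q$. I would apply $\zz_9\colon Y_7\I\C/Y_8 \to \A^c_9 \otimes \Q/\tfrac12\Z$ and compute $\zz_9(\ss_{7,1}(w))$ by Theorem~\ref{thm:formula}. The output is an explicit combination of degree-$9$ Jacobi diagrams with coefficients in $\Q/\tfrac12\Z$, whose non-triviality is then read off from the distinctness of the two basis labels $a, b \in \{1^\pm,\dots,g^\pm\}$.

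The main obstacle is the first step---locating a $Y_8$-trivial clasper configuration whose expansion produces exactly $w + t$ and no other surviving contributions. Any identity coming from AS/IHX/STU-type moves preserves loop degree and would therefore live inside a single $\Ker \ss_{7,l}$, so the required identity must be genuinely cross-stratum; this is precisely the new phenomenon compared with the lower-degree analyses of \cite{NSS22GT,NSS22JT}. Once the correct clasper configuration is pinned down, the remaining verification reduces to bookkeeping controlled by Theorem~\ref{thm:formula}.
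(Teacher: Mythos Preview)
Your overall decomposition $E=w+t$ and the logic ``$\ss_7(E)=0$ plus $\ss_{7,1}(w)\neq 0$'' is exactly right, but both steps have genuine problems as written.

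For the first step you do not actually prove anything: you say you would look for a $Y_8$-trivial clasper configuration whose expansion is $E$, acknowledge this as the ``main obstacle'', and then propose to ``certify'' the cancellation by evaluating $\zz_9$ via Theorem~\ref{thm:formula}. That last idea is backwards. The invariant $\zz_9$ (or any homomorphism out of $Y_7\I\C/Y_8$) can only witness that an element is \emph{non}-zero; it can never certify that $\ss_7(E)=0$. What the paper does instead is simply invoke \cite[Corollary~3.17]{NSS22JT}, which already furnishes the needed relation in $\Ker\ss_7$. So the hard clasper search you are anticipating is not needed here---the relevant cross-loop relation was established in the previous paper.

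For the second step your choice of invariant is off. You reach for the new degree-$(n{+}2)$ map $\zz_9\colon Y_7\I\C/Y_8\to \A^c_9\otimes\Q/\tfrac12\Z$, but the paper uses the older and simpler degree-$(n{+}1)$ map $\bar z_{8}\colon Y_7\I\C/Y_8\to \A^c_8\otimes\Q/\Z$ from \cite{NSS22GT}, projected to the $2$-loop part $\A^c_{8,2}(a,a,a,a,a,b)$. The computation there is short: by \cite[Theorem~1.1]{NSS22GT} one gets $\bar z_{8,2}(\ss_7(w))=\tfrac12\,\theta(a;a,a;a,b,a)$ after a few AS/IHX simplifications, and then Lemma~\ref{lem:A_8_2} (primitivity of $\theta(a;a,a;a,b,a)$) shows this is non-zero in $\A^c_{8,2}\otimes\Q/\Z$. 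Using $\zz_9$ instead would in principle be possible, but Theorem~\ref{thm:formula} produces many more terms and the non-vanishing check would land in $\A^c_9$, which is considerably harder to control than the rank-two module $\A^c_{8,2}(a,a,a,a,a,b)$.
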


Theorem~\ref{thm:deg-by-deg} means that there exists a non-trivial relation between claspers with the same degree but with different first Betti numbers, which seems to be new and interesting.
Note that the STU relation (cf.~\cite[Figure~45]{Hab00C}) is a relation between claspers with different degrees.

\subsection*{Organization of this paper}
In Section~\ref{sec:Preliminaries}, we will review the basic definitions concerning the LMO functor and prove Theorem~\ref{thm:tor_gr_Torelli}.
Section~\ref{sec:homomorphisms} is devoted to the proof of Theorem~\ref{thm:formula} which is our main result.
As an application, we obtain Theorem~\ref{thm:Y6/Y7}.
In Section~\ref{sec:Yn/Yn+1}, we will observe $Y_7\I\C/Y_8$ and show Theorem~\ref{thm:deg-by-deg}.

\subsection*{Acknowledgments}
The authors would like to thank Nariya Kawazumi for his question that encouraged us to pursue the degree $n+2$ term of the LMO functor.
They also thank Katsumi Ishikawa for informing us of the existence of $3$-torsions in a module of Jacobi diagrams (cf.~Remark~\ref{rem:Ishikawa}).
This study was supported in part by JSPS KAKENHI Grant Numbers JP20K14317, JP23K12974, JP22K03298, and 20K03596.

\section{Preliminaries}
\label{sec:Preliminaries}
In this section, we review the basic definitions concerning the LMO functor.
We refer the reader to \cite{CHM08} and \cite[Section~2]{NSS22GT} for more details about the LMO functor.
In Section~\ref{subsec:Torelli}, the proof of Theorem~\ref{thm:tor_gr_Torelli} will be given.

\subsection{Homology cylinders}
Let $M$ be a connected oriented compact $3$-manifold with boundary and let $m\colon \partial(\Sigma_{g,1}\times[-1,1])\to \partial M$ be an orientation-preserving homeomorphism.
We write $m_{+}$ and $m_{-}$ for the restrictions of $m$ to $\Sigma_{g,1}\times\{1\}$ and $\Sigma_{g,1}\times\{-1\}$, respectively.
A pair $(M,m)$ is called a \emph{homology cylinder} over $\Sigma_{g,1}$ if the induced maps $(m_{\pm})_\ast\colon H_\ast(\Sigma_{g,1};\Z)\to H_\ast(M;\Z)$ are the same isomorphism.
Two pairs $(M,m)$ and $(M',m')$ are equivalent if there exists an orientation-preserving homeomorphism $\phi\colon M\to M'$ such that $\phi\circ m=m'$.
Let $\I\C=\I\C_{g,1}$ denote the monoid of equivalent classes of homology cylinders over $\Sigma_{g,1}$.
Here the product of $(M,m)$ and $(M',m')$ is defined by stacking $(M',m')$ on $(M,m)$, that is, $(M\cup_{m_+=m'_-} M', m_-\cup m'_+)$.

A homology cylinder is a special case of a Lagrangian cobordism which is a 3-manifold whose boundary consists of $\Sigma_{g_{+},1}$, $\Sigma_{g_{-},1}$ and annulus satisfying some homological condition (see \cite[Definition~2.2]{CHM08} for the precise definition).

\subsection{Bottom-top tangles}
\label{subsec:bottom-top_tangle}
For a positive integers $g$, fix $g$ pairs of points $(p_1,q_1),\dots,(p_g,q_g)$ in $[-1,1]^2$ uniformly along the first coordinate.
We call a homology cylinder over $[-1,1]^2$ a \emph{homology cube}.
Let $B=(B,m)$ be a homology cube and identify $\partial B$ with $\partial[-1,1]^3$ via $m$.
For non-negative integers $g_+$ and $g_-$, let $\gamma=(\gamma^+,\gamma^-)$ be a framed oriented tangle in $B$ with $g_+$ top components $\gamma^{+}_1,\dots, \gamma^{+}_{g_+}$ and $g_-$ bottom components $\gamma^{-}_1,\dots, \gamma^{-}_{g_-}$ such that each $\gamma_j^{-}$ runs from $q_j\times\{-1\}$ to $p_j\times\{-1\}$ and each $\gamma_j^{+}$ runs from $p_j\times\{1\}$ to $q_j\times\{1\}$.
A pair $(B,\gamma)$ is called a \emph{bottom-top tangle} of type $(g,h)$ in $B$.
In Figure~\ref{fig:bt_tangles}, we give examples of bottom-top tangles in $[-1,1]^3$.
Note here that we use the blackboard framing convention throughout this paper.

\begin{figure}[h]
 \centering
 \includegraphics[width=\textwidth]{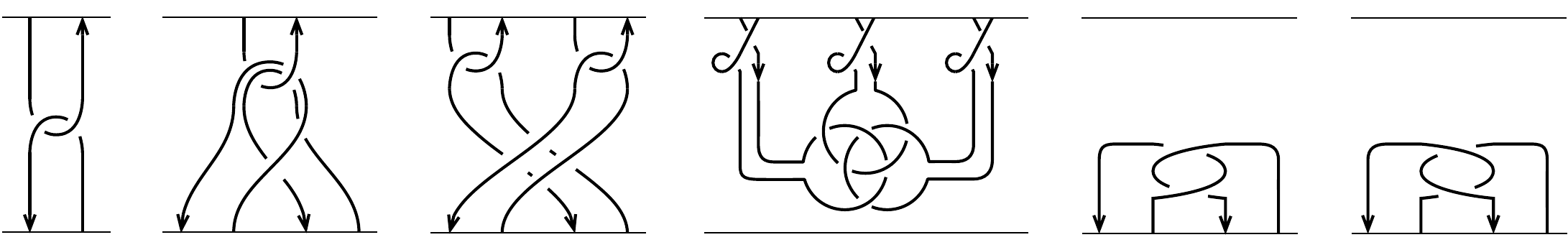}
 \caption{Bottom-top tangles $\Id_1$, $\mu$, $\psi$, $Y$, $c$, and $c'$.}
 \label{fig:bt_tangles}
\end{figure}

Let $(B,\gamma)$ be a bottom-top tangle of type $(g_+,g_-)$ in a homology cube $B$.
Then we obtain a cobordism $(M,m)$ from $\Sigma_{g_+,1}$ to $\Sigma_{g_-,1}$ by digging $B$ along the tangle $\gamma$.
Here the homeomorphism $m\colon \Sigma_{g_+,1} \cup (S^1\times[-1,1]) \cup \Sigma_{g_-,1} \to \partial M$ is uniquely determined (up to isotopy) by the framing of $\gamma$.
See \cite[Theorem~2.10]{CHM08} for details.
Assume that $g_+=g_-=g$ and that the linking matrix $\Lk_B(\gamma)$ of $\gamma$ in $B$ is
\[
\begin{pmatrix}
O_g&I_g\\
I_g&O_g
\end{pmatrix},
\]
where $O_g$ and $I_g$ are the zero matrix and identity matrix of size $g$, respectively.
In this case, we obtain a homology cylinder over $\Sigma_{g,1}$ as mentioned in \cite[Section~8.1]{CHM08}.

\begin{figure}[h]
 \centering
 \includegraphics[width=0.6\textwidth]{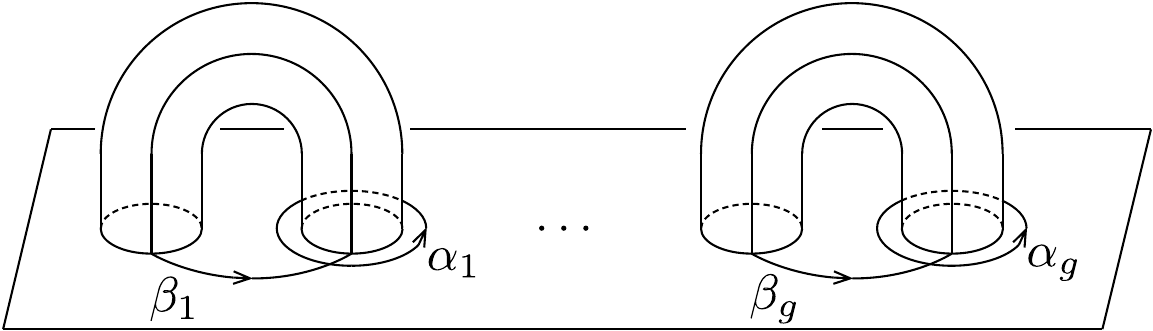}
 \caption{Oriented simple closed curves $\alpha_i$ and $\beta_i$ on $\Sigma_{g,1}$.}
 \label{fig:Sigma_g1}
\end{figure}

Conversely, let $M$ be a cobordism from $\Sigma_{g_+,1}$ to $\Sigma_{g_-,1}$ satisfying some homological condition.
Then we obtain a homology cube $B$ by attaching $3$-dimensional $2$-handles to the boundary of $M$ along each of $\beta_1,\ldots,\beta_{g_+}$ in the top surface and $\alpha_1,\dots,\alpha_{g_-}$ in the bottom surface.
Here, $\alpha_1, \beta_1,\ldots, \alpha_g,\beta_g$ are the oriented simple closed curves in Figure~\ref{fig:Sigma_g1}.
Moreover, letting $\gamma$ be the co-cores of these 2-handles, we obtain a bottom-top tangle $(B,\gamma)$.

Under this correspondence, the composition of cobordisms $M$ from $\Sigma_{g,1}$ to $\Sigma_{f,1}$ and $M'$ from $\Sigma_{h,1}$ to $\Sigma_{g,1}$ induces a composition of bottom-top tangles $\gamma$ of type $(g,f)$ and $\gamma'$ of type $(h,g)$ as described in \cite[Section~2.3]{CHM08}.
We denote the composition by $\gamma\circ\gamma'$, which is of type $(h,f)$, and note that the composition is not just concatenation.

\subsection{Jacobi diagrams}
\label{subsec:Jacobi}
Let $X$ be a (possibly disconnected) oriented compact $1$-manifold and let $C$ be a finite set (of colors or labels).
A \emph{Jacobi diagram} based on $(X,C)$ is a uni-trivalent graph such that each univalent vertex is attached to $X$ or colored by an element of $C$, and for each trivalent vertex $v$ a cyclic order of the half-edges incident to $v$ is equipped.
We use dashed lines for uni-trivalent graphs and solid lines for $X$ as in \cite{CHM08}.
Let $\A(X,C)$ denote the $\Z$-module generated by Jacobi diagrams subject to the AS, IHX, STU, and self-loop relations:
\begin{gather*}
\Ygraph{0.5}{}{}{}
+\!\!
\begin{tikzpicture}[scale=0.5, baseline={(0,-0.1)}, densely dashed]
  \coordinate (origin) at (0,0);
  \draw (origin) .. controls +(1,0.5) and +(1,-0.5) .. (-1,1) node[at end, anchor=south east] {};
  \draw (origin) .. controls +(-1,0.5) and +(-1,-0.5) .. (1,1) node[at end, anchor=south west] {};
  \draw (origin) -- (0,-1) node[at end, anchor=north] {};
\end{tikzpicture}%
\!\!=0,
\qquad
\begin{tikzpicture}[scale=0.5, baseline={(0,-0.1)}, densely dashed]
  \draw (-1,1) -- (1,1) ;
  \draw (-1,-1) -- (1,-1) ;
  \draw (0,1) -- (0,-1) ;
\end{tikzpicture}%
-\ 
\begin{tikzpicture}[scale=0.5, baseline={(0,-0.1)}, densely dashed]
  \draw (-1,1) -- (-1,-1) ;
  \draw (1,1) -- (1,-1) ;
  \draw (-1,0) -- (1,0) ;
\end{tikzpicture}%
\ +
\begin{tikzpicture}[scale=0.5, baseline={(0,-0.1)}, densely dashed]
  \draw (-1,1) -- (1,-1);
  \draw (-1,-1) -- (1,1);
  \draw (-0.5,-0.5) -- (0.5,-0.5);
\end{tikzpicture}%
=0,
\\
\begin{tikzpicture}[scale=0.5, baseline={(0,0.3)}, densely dashed]
\draw[->,solid] (0,0) -- (4,0);
\draw (2,0) -- (2,1) ;
\draw (2,1) -- (1,2) ;
\draw (2,1) -- (3,2) ;
\end{tikzpicture}%
\ -\ 
\begin{tikzpicture}[scale=0.5, baseline={(0,0.3)}, densely dashed]
\draw[->,solid] (0,0) -- (4,0);
\draw (1,0) -- (1,2) ;
\draw (3,0) -- (3,2) ;
\end{tikzpicture}%
\ +\ 
\begin{tikzpicture}[scale=0.5, baseline={(0,0.3)}, densely dashed]
\draw[->,solid] (0,0) -- (4,0);
\draw (1,0) -- (3,2) ;
\draw (3,0) -- (1,2) ;
\end{tikzpicture}%
=0,
\qquad
\begin{tikzpicture}[scale=0.5, baseline={(0,-0.1)}, densely dashed]
  \draw (0,0) circle [radius=1];
  \draw (1,0) -- (2,0) node {};
\end{tikzpicture}%
=0,
\end{gather*}
where the rest of the diagrams are the same in each relation.
For a Jacobi diagram $J$, we define the \emph{degree} $\deg J$ to be half the number of vertices and the \emph{internal degree} $\ideg J$ by the number of trivalent vertices.
Note that the degree is preserved by the relations in general and the internal degree is preserved if $X$ is empty.
When $X=\emptyset$, we simply write $\A(C)$ for $\A(\emptyset,C)$ and we have $\A(C) = \bigoplus_{i\geq 0} \A_i(C)$, where $\A_i(C)$ denotes the submodule generated by Jacobi diagrams of $\ideg=i$.
Let $\widehat{\A}(C)_\Q$ denote the completion of $\A(C)_\Q=\A(C)\otimes \Q$ with respect to $\ideg$, that is, $\widehat{\A}(C)_\Q = \prod_{i\geq 0} \A_i(C)_\Q$.
It is known that $\widehat{\A}(C)_\Q$ has a structure of a complete Hopf algebra (see \cite[Section~3.1]{CHM08}), and the primitive elements coincides with the submodule $\widehat{\A}^c(C)_\Q$ generated by connected Jacobi diagrams.
Then, the maps $\exp=\exp_\sqcup$ and $\log=\log_\sqcup$ with respect to the disjoint union $\sqcup$ are defined in the usual manner.

A connected Jacobi diagram without trivalent vertices is called a \emph{strut} and let $\A^Y(C)$ denote the quotient of $\A(C)$ by declaring any diagram containing a strut to be zero.
The image of $x \in \A(C)$ under the projection $\A(C)\twoheadrightarrow \A^Y(C)$ is denoted by $x^Y$.
Let $J\in \A(\{1^+,\dots,q^+,1^-,\dots,p^-\}))$ and $J'\in \A(\{1^+,\dots,r^+,1^-,\dots,q^-\}))$ be Jacobi diagrams.
The composition $J\circ J'\in \A(\{1^+,\dots,r^+,1^-,\dots,p^-\}))$ is defined to be the sum of all ways of gluing the $i^+$-colored vertices of $J$ to the $i^-$-colored vertices of $J'$ for all $i\in \{1,\dots,q\}$.
We refer the reader to \cite[Section~4.2]{CHM08} or \cite[Section~2.6]{NSS22GT} for details.
Moreover, the linear extension of this composition is defined among \emph{top-substantial} Jacobi diagrams, that is, Jacobi diagrams without struts both of whose vertices are colored by $\{1^+,2^+,\dots\}$.

In this paper, we mainly consider the case $(X,C)=(\emptyset, \{1^\pm,\dots,g^\pm\})$, so we simply write $\A$ for $\A(\emptyset, \{1^\pm,\dots,g^\pm\})$.

\subsection{The LMO functor}
Cheptea, Habiro, and Massuyeau introduced the LMO functor as a functorial extension of the LMO invariant.
The LMO functor $\Ztilde\colon \LCob_q\to \tsA$ is a functor from a certain category of cobordisms to a certain category of Jacobi diagrams, which can be used as an invariant of cobordisms.
Let us first recall these two categories following \cite[Section~4]{CHM08}.
We write $\Mag(\bullet)$ for the free magma generated by a letter $\bullet$, for example, $(\bullet(\bullet\bullet))(\bullet\bullet) \in \Mag(\bullet)$.
A Lagrangian $q$-cobordism is a Lagrangian cobordism from $\Sigma_{g_+,1}$ to $\Sigma_{g_-,1}$ together with $w_+, w_-\in \Mag(\bullet)$ with $|w_\pm|=g_\pm$, where $|w|$ denotes the length $w\in \Mag(\bullet)$.
Let $\LCob_q$ denote the category whose objects are elements of $\Mag(\bullet)$ and whose morphisms from $w_+$ to $w_-$ are Lagrangian $q$-cobordisms from $\Sigma_{|w_+|,1}$ to $\Sigma_{|w_-|,1}$.
In this paper, we regard homology cylinders as Lagrangian $q$-cobordisms with $w_{+}=w_{-}=(\cdots((\bullet\bullet)\bullet) \cdots \bullet) \in \Mag(\bullet)$.
Let $\tsA$ denote the category whose objects are non-negative integers and whose morphisms from $n_+$ to $n_-$ are infinite sums of top-substantial Jacobi diagrams, where the composition is given by gluing univalent vertices colored by $i^+$ and $i^-$ for each $i$.
See \cite[Section~4.2]{CHM08} for the precise definition.

Next, we briefly recall the definition of the LMO functor.
For an object $w\in \Mag(\bullet)$, we define $\Ztilde(w)=|w|$.
Let $(M,m)$ be a Lagrangian $q$-cobordism from $w_+$ to $w_-$.
As in Section~\ref{subsec:bottom-top_tangle}, we obtain a bottom-top tangle $(B,\gamma)$ together with $w_+$ and $w_-$, which is called a \emph{bottom-top $q$-tangle}.
Since $B$ is a homology cube, it is homeomorphic to the $3$-manifold $[-1,1]^3_L$ obtained by Dehn surgery along some framed link $L$ in $[-1,1]^3$, and the tangle in $[-1,1]^3$ corresponding to $\gamma\subset B$ is again denoted by $\gamma$.
Now, by choosing an associator, the Kontsevich invariant of the framed tangle $\gamma\cup L$ in $[-1,1]^3$ is defined.
Throughout this paper, we mainly use an (even) rational associator following \cite{CHM08}.
Applying the Aarhus integral to the resulting value and normalizing it suitably, we obtain a series of top-substantial Jacobi diagrams that is independent of the choice of $L$.
This procedure defines $\Ztilde$ at the level of morphisms.
In particular, $\Ztilde$ induces the LMO homomorphism $\I\C\to \widehat{\A}_\Q$ on the monoid $\I\C$ of homology cylinders over $\Sigma_{g,1}$.

Massuyeau \cite{Mas12} proved that the tree part of the LMO functor corresponds to the total Johnson homomorphism.
The authors \cite{NSS23} showed that the $1$-loop part is related to a non-commutative Reidemeister-Turaev torsion.

\subsection{Claspers}
A graph clasper in $M$ is an embedded surface consisting of annuli, disks, and bands such that each disk is connected with three bands and each annulus is connected with one band.
We can obtain a framed link from a graph clasper $G$ according to \cite{Hab00C} and perform Dehn surgery along it.
This procedure is called \emph{clasper surgery} and the resulting $3$-manifold is denoted by $M_G$.
For a graph clasper $G$, its \emph{degree} $\deg G$ is defined to be the number of disks of $G$.
Two homology cylinders $M$ and $M'$ are said to be \emph{$Y_n$-equivalent} if there exist disjoint graph claspers $G_1,\dots,G_k$ of degree $n$ in $M$ satisfying $M_{G_1\sqcup\cdots\sqcup G_k} = M'$.
Let $Y_n\I\C$ denote the submonoid consisting of homology cylinders over $\Sigma_{g,1}$ being $Y_n$-equivalent to the trivial one $\Sigma_{g,1}\times[-1,1]$.
Then we have a descending series $\I\C = Y_1\I\C \supset Y_2\I\C \supset \cdots$ of submonoids, which is called the \emph{$Y$-filtration} on $\I\C$.
The quotient $Y_n\I\C/Y_{n+1}$ of $Y_n\I\C$ by the $Y_{n+1}$-equivalence is known to be a finitely generated abelian group (see \cite[Section~8.5]{Hab00C}).

For a Jacobi diagram $J$ in $\A^c_n$, we obtain a graph clasper $G(J)$ of degree $n$ in $\Sigma_{g,1}\times[-1,1]$ as follows.
First, replace univalent vertices, trivalent vertices, and edges of $J$ with annuli, disks, and bands, respectively.
Next, embed the resulting surface according to labels of univalent vertices of $J$.
See \cite{Hab00C} or \cite{NSS22GT} for details.
It is shown that $(\Sigma_{g,1}\times[-1,1])_{G(J)}$ is well-defined up to $Y_{n+1}$-equivalence, and thus we have a homomorphism $\ss_n\colon \A^c_n\to Y_n\I\C/Y_{n+1}$.

For a (possibly disconnected) graph clasper $G$ in $M$, define $[M,G] \in \Z\I\C$ by
$
[M,G]=\sum_{G'\subset G} (-1)^{|G'|}M_{G'},
$
where $G'$ runs over unions of connected components of $G$ and $|G'|$ denotes the number of connected components of $G'$.
Let $\F_n\I\C$ denote the submodule of $\Z\I\C$ generated by elements $[M,G]$ for $M\in \I\C$ and graph claspers $G$ of degree $n$.
This gives a descending series $\Z\I\C \supset \F_1\I\C \supset \F_2\I\C \supset \cdots$.
We then have the homomorphism $\S_n\colon \A^Y_n\to \F_n\I\C/\F_{n+1}\I\C$ defined by $\S_n(J)=[\Sigma_{g,1}\times[-1,1], G(J)]$.

The homomorphisms $\ss_n$ and $\S_n$ are known to be surjective if $n\geq 2$.
Furthermore, $\ss_n\otimes\id_\Q$ and $\S_n\otimes\id_\Q$ are isomorphisms for $n\geq 1$.
In fact, the degree $n$ part of the LMO functor induces homomorphisms $Y_n\I\C/Y_{n+1}\to \A^c_n\otimes\Q$ and $\F_n\I\C/\F_{n+1}\I\C\to \A^Y_n\otimes\Q$, which give the inverses up to sign \cite[Theorem~7.11]{CHM08}.

\subsection{Torsion elements of $\I(n)/\I(n+1)$}
\label{subsec:Torelli}
In \cite{NSS22GT}, the authors constructed a homomorphism $\bar{z}_{n+1}\colon Y_n\I\C/Y_{n+1}\to \A^c_{n+1}\otimes \Q/\Z$ induced by $\Ztilde_{n+1}$ and gave a formula for clasper surgery in terms of Jacobi diagrams.
As an application of $\bar{z}_{n+1}$ and \cite{KuRW23}, we here prove Theorem~\ref{thm:tor_gr_Torelli}.

\begin{proof}[Proof of Theorem~\ref{thm:tor_gr_Torelli}]
The authors showed in \cite[Theorem~1.2]{NSS22GT} that the composition of
\[
\bar{z}_{2n}=(\log \Ztilde^Y)_{2n}\colon Y_{2n-1}\I\C/Y_{2n}\to \A_{2n}^c\otimes \Q/\Z
\]
and the natural homomorphism 
\[
\mathfrak{c}_{2n-1}\colon\I(2n-1)/\I(2n) \to Y_{2n-1}\I\C/Y_{2n}
\]
is non-trivial.
It is also non-trivial when restricted to the kernel $\Ker \tau_{2n-1}\subset \I(2n-1)/\I(2n)$ of the $(2n-1)$st Johnson homomorphism $\tau_{2n-1}$.
For example, let $x=O(1^+,2^+, \ldots, n^+, \ldots, 2^+,1^+)\in\A_{2n-1}^c$. 
By \cite[Lemma~6.2]{NSS22GT}, there exists $\varphi\in\I(2n-1)$ such that $\mathfrak{c}_{2n-1}(\varphi)=\ss_{2n-1}(x)\in Y_{2n-1}\I\C/Y_{2n}$.
Moreover, as in the paragraph just after \cite[Proof of Theorem~1.2]{NSS22GT}, we have $\varphi\in\Ker\tau_{2n-1}$.
Let $\psi$ be the mapping class which sends $\beta_i$ to $\beta_{i+1}$ for $1\le i\le n$,
where $\{\alpha_i,\beta_i\}_{i=1}^g$ denotes the basis of $\pi_1\Sigma_{g,1}$ in Figure~\ref{fig:Sigma_g1}
and $\beta_{g+1}=\beta_1$.
Setting $y=O(2^+,3^+, \ldots, (n+1)^+, \ldots, 3^+,2^+)$, we have
\[
\mathfrak{c}_{2n-1}(\psi)\circ\ss_{2n-1}(x)\circ\mathfrak{c}_{2n-1}(\psi^{-1})=\ss_{2n-1}(y)\in Y_{2n-1}\I\C/Y_{2n}.
\]
In \cite[Theorem~1.1]{NSS22GT}, we describe the composition
\[
\bar{z}_{2n}\circ\ss_{2n-1}\colon  \A_{2n-1}^c\to \A_{2n}^c\otimes \Q/\Z
\]
explicitly in terms of an operation on Jacobi diagrams.
In particular, we have $\bar{z}_{2n}(\ss_{2n-1}(y))\ne\bar{z}_{2n}(\ss_{2n-1}(x))$.
Thus, we obtain
\[
\bar{z}_{2n}(\mathfrak{c}_{2n-1}(\psi\circ\varphi\circ\psi^{-1}))\ne \bar{z}_{2n}(\mathfrak{c}_{2n-1}(\varphi)).
\]
As explained in Section~\ref{sec:Introduction},
$\Ker(\tau_n\otimes\id_\Q)\subset \I(n)/\I(n+1)\otimes\Q$
is a trivial $\Sp(2g,\Q)$-module when $3n\le g$ as shown in  \cite[Theorem~B]{KuRW23}.
Since $\varphi\in \Ker\tau_{2n-1}$ and the $\Sp(2g,\Q)$-action on $\I(n)/\I(n+1)$ is induced by the conjugacy action of $\M$ on $\I(n)$, we have
\[
\psi\circ\varphi\circ\psi^{-1}=\varphi\in\I(2n-1)/\I(2n)\otimes \Q.
\]
Thus, 
the commutator $[\psi,\varphi]\in\I(2n-1)/\I(2n)$
is a non-trivial torsion element.
\end{proof}

\begin{remark}
In \cite{FaMa24}, Faes and Massuyeau constructed a homomorphism $\mathcal{R}$ from $\K$ to some torsion module which factors through $\K/\I(4)$, and constructed an element $\varphi'\in\I(3)/\I(4)$ such that $\mathcal{R}(\varphi')\ne0$.
Using \cite[Theorem~1.2]{MSS20}, it is shown to be a torsion element by an argument similar to the proof of Theorem~\ref{thm:tor_gr_Torelli}.
\end{remark}

\section{Homomorphisms induced by the LMO functor}
\label{sec:homomorphisms}
In this section, we introduce two homomorphisms $\ZZ_{n+2}$ and $\zz_{n+2}$ via the LMO functor and investigate their properties, which play a crucial role in this paper.

\subsection{Definitions of $\ZZ_{n+2}$ and $\zz_{n+2}$}

\begin{definition}
For a positive integer $n$, define a homomorphism 
\[
\ZZ_{n+2}\colon \F_n\I\C/\F_{n+1}\I\C \to \A^Y_{n+2}\otimes_\Z \Q \twoheadrightarrow \A^Y_{n+2}\otimes_\Z \Q/\tfrac{1}{2}\Z
\]
by $\ZZ_{n+2}([x])=\Ztilde^Y_{n+2}(x)$.
Also, define a homomorphism
\[
\zz_{n+2}\colon Y_n\I\C/Y_{n+1} \to \A^c_{n+2}\otimes_\Z \Q \twoheadrightarrow \A^c_{n+2}\otimes_\Z \Q/\tfrac{1}{2}\Z
\]
by $\zz_{n+2}([M])=(\log\Ztilde^Y(M))_{n+2}$, where $\log=\log_\sqcup$ as in Section~\ref{subsec:Jacobi}.
\end{definition}

The previous result \cite[Theorem~1.1]{NSS22GT} and the surjectivity of the map $\S_{n+1}$ induced by clasper surgery imply $\Ztilde^Y_{n+2}(\F_{n+1}\I\C) \subset \Im\iota_{n+2}$ for $n\geq 1$, where $\iota_{n}$ is the induced homomorphism appearing in the exact sequence
\[
\A^Y_{n}\otimes\tfrac{1}{2}\Z \xrightarrow{\iota_{n}} \A^Y_{n}\otimes\Q \to \A^Y_{n}\otimes\Q/\tfrac{1}{2}\Z \to 0.
\]
Hence, the map $\ZZ_{n+2}$ is well-defined.
To see the well-definedness of $\zz_{n+2}$, it suffices to show
\[
(\log\Ztilde^Y(M))_{n+2}\equiv (\log\Ztilde^Y(M_G))_{n+2} \mod \tfrac{1}{2}\Z
\]
for $M\in Y_n\I\C$ and a connected graph clasper $G$ of degree $n+1$.
Let $x_d = (\log\Ztilde^Y(M))_{d}$ and $y_d = (\log\Ztilde^Y(M_G))_{d}$.
Since $M-M_G=[M,G]\in \F_{n+1}\I\C$ and
\[
\Ztilde^Y_{d}(\F_{n+1}\I\C)
\begin{cases}
=\{0\} & \text{if $1\leq d\leq n$,}\\
\subset \Im(\A^Y_{n} \to \A^Y_{n}\otimes\Q) & \text{if $d=n+1$,}\\
\subset \Im\iota_{n+2} & \text{if $d=n+2$,}
\end{cases}
\]
we have 
\[
x_d
\begin{cases}
= y_d & \text{if $1\leq d\leq n$,}\\
\equiv y_d \mod \Z & \text{if $d=n+1$.}
\end{cases}
\]
It follows that
\begin{align*}
\Ztilde^Y_{n+2}([M,G])
&=\bigl(\Ztilde^Y(M)-\Ztilde^Y(M_G)\bigr)_{n+2}\\
&=\bigl(\exp(x_1+\cdots+x_{n+1}+x_{n+2}+\cdots)-\exp(y_1+\cdots+y_{n+1}+y_{n+2}+\cdots)\bigr)_{n+2}\\
&\equiv x_{n+2}-y_{n+2}\bmod\Z.
\end{align*}
Thus, we obtain the desired equality modulo $\frac{1}{2}\Z$.

\begin{remark}
\label{rem:logZ}
For $M\in Y_n\I\C$, noting that $\Ztilde_k^Y(M)=0$ for $1\le k<n$, we have
\[
(\log\Ztilde^Y(M))_{n+2}=
\begin{cases}
 \Ztilde^Y_{n+2}(M) & \text{if $n\geq 3$,} \\
 \Ztilde^Y_{4}(M)-\frac{1}{2}\Ztilde^Y_{2}(M)\sqcup\Ztilde^Y_{2}(M) & \text{if $n=2$,} \\
 \Ztilde^Y_{3}(M)-\Ztilde^Y_{1}(M)\sqcup\Ztilde^Y_{2}(M) +\frac{1}{3}\Ztilde^Y_{1}(M)^{\sqcup 3} & \text{if $n=1$.}
\end{cases}
\]
Since the coefficients of $\Ztilde^Y_{2}(M)$ lie in $\tfrac{1}{2}\Z$ if $n=1$ and in $\Z$ if $n=2$, one obtains the following equality in $\A^Y_{n+2}\otimes\Q/\tfrac{1}{2}\Z$:
\[
\zz_{n+2}([M])=
\begin{cases}
 \Ztilde^Y_{n+2}(M) & \text{if $n\geq 2$,} \\
 \Ztilde^Y_{3}(M) +\frac{1}{3}\Ztilde^Y_{1}(M)^{\sqcup 3} & \text{if $n=1$.}
\end{cases}
\]
\end{remark}

\begin{remark}
In \cite{NSS22GT}, we construct a homomorphism
\[
\bar{z}_{n+1}\colon Y_n\I\C/Y_{n+1}\to \A_{n+1}^c\otimes \Q/\Z
\]
which does not depend on the choice of an even rational associator $\Phi$. 
The homomorphism $\zz_{n+2}$ is also independent of such a $\Phi$ since the $\deg\leq 3$ part of $\Phi$ is uniquely determined.
The authors do not know whether they can construct a non-trivial homomorphism $Y_n\I\C/Y_{n+1}\to \A_{n+k}^c\otimes \Q/A$ for $k\geq 3$ in the same way, where $A$ is some $\Z$-submodule of $\Q$.
\end{remark}

\subsection{Computation of the LMO functor}
This subsection is devoted to the computation of the LMO functor for some bottom-top $q$-tangles up to internal degree $3$, which will be used in the proof of Theorem~\ref{thm:formula}.
We sometimes use identities among bottom-top tangles which fail as bottom-top $q$-tangles, but this difference does not affect the computation of lower-degree terms of the LMO functor due to the next lemma.
Let $P_{u,v,w}$ be the $q$-tangle defined in \cite[Section~5.1]{CHM08},
that is, the identity element $\Id_g$ equipped with the words $(u(vw))$ and $((uv)w)$ at the top and the bottom, respectively.
Here, $u, v, w\in \Mag(\bullet)$ satisfies $g=|u|+|v|+|w|$.

\begin{lemma}
\label{lem:associator}
For any associator,
\[
(\log\Ztilde^Y(P_{u,v,w}))_{\leq 3}=0.
\]
\end{lemma}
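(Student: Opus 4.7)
The plan is to reduce $\Ztilde(P_{u,v,w})$ to an insertion of the chosen associator $\Phi$, and then to show that the low-degree part of $\log\Phi$ projects to zero under the $Y$-projection.

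First, I would use that $P_{u,v,w}$ represents the trivial cobordism $\Sigma_{g,1}\times[-1,1]$ with $g=|u|+|v|+|w|$, and differs from the identity $q$-tangle $\Id_g$ only by a change of parenthesization at the top and bottom. The construction of the LMO functor in \cite[Section~5]{CHM08} then expresses $\Ztilde(P_{u,v,w})$ as the image of $\Phi$, substituted into three blocks of strands of sizes $|u|,|v|,|w|$, under the conversion from chord diagrams on strands to Jacobi diagrams on colored vertices. Under this conversion each infinitesimal chord $t_{ij}$ becomes a strut between a vertex of the $i$-th block and a vertex of the $j$-th block.

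Next, I would invoke the universal low-degree structure of Drinfeld associators: $(\log\Phi)_1=0$ because $\Phi=1\bmod\text{degree }2$; the degree $2$ part is universally $(\log\Phi)_2=\tfrac{1}{24}[t_{12},t_{23}]$; and for an even rational associator $(\log\Phi)_3=0$ by parity. Hence the degree $2$ contribution to $\Ztilde^Y(P_{u,v,w})$ is a linear combination of Jacobi diagrams consisting only of struts, which vanishes in $\A^Y$. Together with $(\log\Phi)_1=(\log\Phi)_3=0$, this yields the vanishing claimed.

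The main obstacle will be the first step: making the identification $\Ztilde(P_{u,v,w})\leftrightarrow\Phi$ precise through all the normalizations of the LMO functor (the $\nu$-factors at endpoints of strands, the effect of the Aarhus integration with empty surgery link, and the parenthesization-dependent conversions). Once this identification is clean, the remaining argument is a routine degree-by-degree verification, and one also gains for free the stronger statement that the result is independent of the particular even rational associator chosen, matching the remark after the definition of $\zz_{n+2}$.
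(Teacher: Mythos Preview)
Your proposal has two genuine gaps, and misses the mechanism that actually makes the lemma work.

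First, the lemma is stated \emph{for any associator}, whereas your argument for $(\log\Phi)_3=0$ invokes parity, which is only valid for even associators. For a general associator the degree~$3$ part of $\log\Phi$ is nonzero (it involves a $\zeta(3)$ term in the KZ case), so your degree-by-degree vanishing breaks down precisely where the lemma claims something for all $\Phi$.

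Second, your claim that the degree~$2$ contribution ``consists only of struts'' is incorrect. The commutator $[t_{12},t_{23}]$ is not a sum of struts: by the STU relation it is a $Y$-tree (one trivalent vertex, three legs), hence has $\ideg=1$ and does \emph{not} die under the $Y$-projection. Your sentence ``each infinitesimal chord $t_{ij}$ becomes a strut'' is fine, but it does not imply that a commutator of chords becomes a difference of disjoint unions of struts; the conversion via $\chi^{-1}$ does not send the stacking product to $\sqcup$. So even in degree~$2$ you would still need a reason why this $Y$-tree contributes zero to $\Ztilde^Y(P_{u,v,w})$, and you have not given one.

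The paper's proof takes a completely different route and avoids both problems. It writes $Z(L)=\Delta^{+++}_{u',v',w'}(\Phi)$ where each letter $\bullet$ is cabled to $(+-)$, and then uses a sign cancellation: any leg sitting on the $(2i{-}1)$st line is paired with the same diagram having that leg moved to the $(2i)$th line, with opposite sign. When the top endpoints of these paired lines are connected, the two contributions cancel. Hence the only surviving terms of $\Ztilde^Y(P_{u,v,w})-\emptyset$ must pick up $\deg\geq 1$ pieces of the normalization struts on at least three distinct components of $\widehat{L}$ (since every nontrivial term of $\log\Phi$ touches all three blocks and has $\deg\geq 2$), forcing total $\deg\geq 5$. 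This argument uses only the group-likeness of $\Phi$ and the cabling structure, so it works for any associator. Your ``main obstacle''---making the identification with $\Phi$ precise through the LMO normalizations---is exactly where this cancellation lives; skipping it is what causes your argument to fail.
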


\begin{proof}
Set $P_{u,v,w}$ in the form of \cite[Lemma~5.5]{CHM08}.
More precisely, let $w_1=\cdots =w_g=+$ and let $L$ be a disjoint union of $2g$ straight lines in $[-1,1]^3$ endowed with non-associative words $(u(vw)/\bullet\mapsto (+-))$ and $((uv)w/\bullet\mapsto (+-))$ at the top and the bottom, respectively.
As in \cite[Section~3.4]{CHM08},
we have
\[
Z(L)=\Delta_{u',v',w'}^{+++}(\Phi)\in\A(\downarrow^{u'v'w'}),
\]
where $u'=(u/\bullet\mapsto (+-))$, $v'=(v/\bullet\mapsto (+-))$, $w'=(w/\bullet\mapsto (+-))$, respectively.
Let $J$ be a Jacobi diagram appearing in a non-trivial term of $Z(L)$.
Assume that a leg $e$ of $J$ is attached to the $(2i-1)$st line for some $i$.
By the definition of $\Delta_{u',v',w'}^{+++}$,
there also exists another term with opposite sign and with the Jacobi diagram which differs from $J$ only at the point that the leg $e$ is attached to $(2i)$th line.
Hence, $\Delta_{u',v',w'}^{+++}(\Phi)$ vanishes if we connect the top endpoints of the $(2i-1)$st and $(2i)$th lines for all $1\le i\le g$.

Let $\widehat{L}$ be the $1$-manifold consisting of $g$ connected components of the $q$-tangle $P_{u,v,w}$ whose endpoints lie in the bottom $[-1,1]^2\times\{-1\}$.
As we saw above,
it suffices to consider only the terms of $\Ztilde^Y(P_{u,v,w})$ coming from $\deg\geq 1$ parts of exponentials of struts at components of $\widehat{L}$ to which the legs of $J$ attach.
Since $\Phi$ is group-like, $\Phi$ is written as an exponential of an infinite series of connected Jacobi diagrams, and the legs of each diagram are attached to all the three lines.
Hence, we may assume that $\deg J\geq 2$ and the legs of $J$ are also attached to at least three different components of $\widehat{L}$.
Thus, the non-trivial terms of $\Ztilde^Y(P_{u,v,w})-\emptyset$ have $\deg\geq 3+2$.
Therefore, the $\deg\leq 4$ part of $\log\Ztilde^Y(P_{u,v,w})$ is $0$.
Since a connected Jacobi diagram of $\ideg\leq 3$ is of $\deg\leq 4$, the $\ideg\leq 3$ part of $\log\Ztilde^Y(P_{u,v,w})$ is also $0$.
\end{proof}

\begin{figure}[h]
 \centering
 \includegraphics[width=0.7\textwidth]{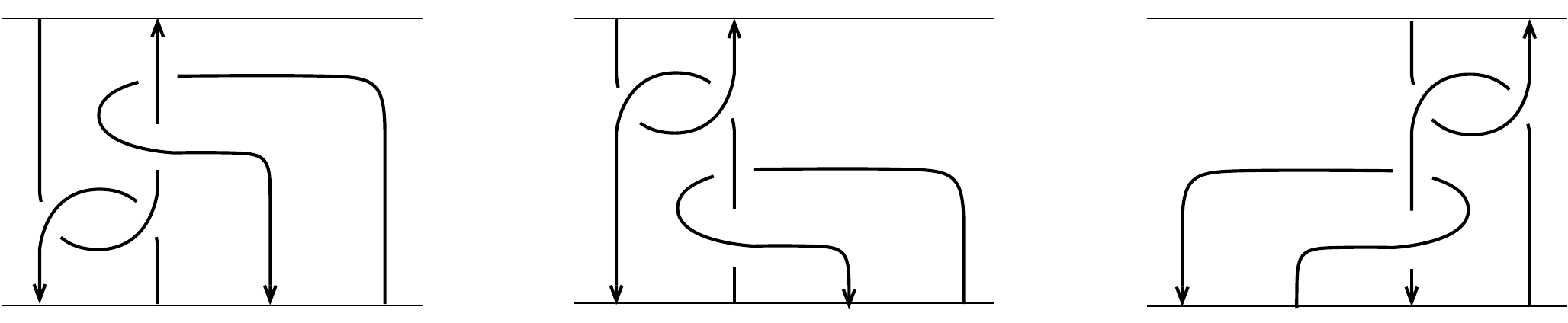}
 \caption{Bottom-top $q$-tangles $\Delta_t$, $\Delta_b$, and ${}_b\Delta$.}
 \label{fig:Deltas}
\end{figure}

Let $\Delta_t$, $\Delta_b$, and ${}_b\Delta$ be bottom-top $q$-tangles in Figure~\ref{fig:Deltas}.
Define $\Delta_t^m$ and $\Delta_b^m$ inductively by $\Delta_t^m=(\Delta_t^{m-1}\otimes \Id_1)\circ \Delta_t$
and $\Delta_b^m=(\Delta_b^{m-1}\otimes \Id_1)\circ \Delta_b$.
For convenience, we also define $\Delta_t^0 = \Delta_b^0 = \Id_1$.

\begin{lemma}
For the bottom-top $q$-tangle ${}_b\Delta$,
\begin{align*}
 (\log\Ztilde({}_b\Delta))_{\leq 2} 
&= \strutgraph{1^+}{2^-} -\strutbottom{1^-}{2^-} +\frac{1}{2}\lambdagraph{0.4}{1^+}{1^-}{2^-} -\frac{1}{12}\Hgraph{1^+}{1^+}{1^-}{2^-} \\
&+\frac{1}{12}\lamlam{1^+}{1^-}{1^-}{2^-} +\frac{1}{8}\Hbottom{1^-}{1^-}{2^-}{2^-} +\frac{1}{8}\dumbbell{1^-}{2^-}.
\end{align*}
\end{lemma}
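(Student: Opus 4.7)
The plan is to compute $\Ztilde({}_b\Delta)$ directly from the Kontsevich invariant of the underlying $q$-tangle and then take the logarithm, working throughout in $\tsA$ truncated at internal degree $\leq 2$.

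First I would realize ${}_b\Delta$ as a composition of elementary bottom-top $q$-tangles whose LMO values are already available. The underlying $3$-manifold of ${}_b\Delta$ is a standard cube, so no Aarhus integration is needed; the LMO functor reduces to (a suitable normalization of) the Kontsevich invariant of the framed $q$-tangle $\gamma$ obtained from Figure~\ref{fig:Deltas}. I would express ${}_b\Delta$ as a concatenation of pieces chosen from the generators $\Id_1,\mu,\psi,Y,c,c'$ of Figure~\ref{fig:bt_tangles}, bottom/top doublings $\Delta_b$ and $\Delta_t$, and reassociators $P_{u,v,w}$ inserted to match non-associative words. Lemma~\ref{lem:associator} guarantees that all $P_{u,v,w}$ insertions contribute nothing to $\log \Ztilde^Y$ up to internal degree $3$, so they can be ignored in the computation.

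Next I would apply functoriality of $\Ztilde$ and the composition law in $\tsA$ (gluing $i^+$-colored vertices to $i^-$-colored vertices) to assemble the value piece-by-piece. The internal degree $1$ part is forced by the linking matrix of $\gamma$: between the top component and the bottom components this produces the strut $\strutgraph{1^+}{2^-}$, while the bottom-bottom linking yields $-\strutbottom{1^-}{2^-}$, matching the first two terms. The internal degree $2$ terms then arise from three sources: (i) direct degree-$2$ contributions of the elementary pieces such as $Y$, which produce the $\tfrac{1}{2}$-coefficient tree $\lambdagraph{0.4}{1^+}{1^-}{2^-}$; (ii) degree-$2$ pieces of the Kontsevich associator $\Phi$, which after normalization by $\nu$ contribute the $\pm\tfrac{1}{12}$-coefficients on the $H$-shaped and $\lambda\lambda$-shaped diagrams; and (iii) the passage from $\Ztilde^Y$ to $\log\Ztilde^Y$, where subtracting $\tfrac{1}{2}(\Ztilde^Y_1)^{\sqcup 2}$ from $\Ztilde^Y_2$ converts the squared strut contributions (at internal degree $0$, but total degree $2$, coming from products of the degree-$1$ struts with themselves) into the $\tfrac{1}{8}$-coefficient $\Hbottom{1^-}{1^-}{2^-}{2^-}$ and $\dumbbell{1^-}{2^-}$ terms. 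Throughout I would work modulo Jacobi diagrams of internal degree $\geq 3$.

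The main obstacle is the bookkeeping: correctly tracking the cyclic orientations at trivalent vertices so that the AS and IHX relations are applied consistently with the conventions of \cite{CHM08}, correctly handling the composition in $\tsA$ (which is a sum over all pairings of colored legs, not concatenation), and recovering the specific rational coefficients. The signs and $\tfrac{1}{12}, \tfrac{1}{8}$ factors are the most delicate, because they mix the contribution of the even rational associator with the quadratic correction from $\log_{\sqcup}$. As a sanity check, I would verify that applying the projection $\A\twoheadrightarrow \A^Y$ (killing struts) to the claimed formula returns a top-substantial Jacobi diagram of internal degree $\leq 2$, and would test the formula against the known values of $\Ztilde$ on the closely related tangles $\Delta_t$ and $\Delta_b$ of Figure~\ref{fig:Deltas} by composing ${}_b\Delta$ with elementary generators and comparing.
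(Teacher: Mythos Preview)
Your overall strategy --- decompose ${}_b\Delta$ into elementary bottom-top tangles and use functoriality of $\Ztilde$ --- is exactly what the paper does, but the paper's execution is much shorter and your heuristic attribution of the individual terms contains real errors.

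The paper simply writes ${}_b\Delta = (\Id_1\otimes\mu)\circ(c\otimes\Id_1)$ and reads off $(\log\Ztilde)_{\le 2}$ of each factor from \cite[Table~5.2]{CHM08}, then composes in $\tsA$. No direct Kontsevich-invariant computation or associator analysis is needed; the values are already tabulated.

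Your term-by-term explanation would lead you astray if you tried to carry it out. First, struts have internal degree $0$, not $1$; the $Y$-shaped tree $\lambdagraph{0.3}{1^+}{1^-}{2^-}$ is the internal-degree-$1$ part, and the $H$-graphs and dumbbell are internal degree $2$. Second, the bottom-top tangle $Y$ of Figure~\ref{fig:bt_tangles} does not appear in any decomposition of ${}_b\Delta$; the $\tfrac{1}{2}$-coefficient tree comes from $\mu$, whose $\log\Ztilde$ contains $-\tfrac{1}{2}\Ygraph{0.3}{2^+}{3^+}{2^-}$. Third, and most importantly, your source (iii) for the $\tfrac{1}{8}$ terms is wrong: the $\sqcup$-square of struts has internal degree $0$, so the correction $-\tfrac{1}{2}(\Ztilde_1)^{\sqcup 2}$ in passing to $\log$ cannot produce an internal-degree-$2$ diagram. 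Those $\tfrac{1}{8}$ terms are already present in $(\log\Ztilde(c))_{\le 2}$ as listed in \cite[Table~5.2]{CHM08}, and survive unchanged under composition with $\Id_1\otimes\mu$ because neither leg is $+$-colored.
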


\begin{proof}
Let $c$ be the bottom-top tangle in \cite[Example~5.2]{CHM08} and recall the notation of bottom-top tangles in Figures~\ref{fig:bt_tangles} and \ref{fig:Deltas}.
By \cite[Table~5.2]{CHM08}, we have
\begin{align*}
(\log\Ztilde(\Id_1\otimes\mu))_{\le2}&= \strutgraph{1^+}{1^-}+\strutgraph{2^+}{2^-}+\strutgraph{3^+}{2^-}-\frac{1}{2}\Ygraph{0.4}{2^+}{3^+}{2^-}+\frac{1}{12}\YYgraph{2^+}{2^+}{3^+}{2^-}+\frac{1}{12}\YYgraphref{2^+}{3^+}{3^+}{2^-}, \\
(\log\Ztilde(c\otimes\Id_1))_{\le2}&= -\strutbottom{1^-}{2^-}+\strutgraph{1^+}{3^-}+\frac{1}{8}\Hbottom{1^-}{1^-}{2^-}{2^-}+\frac{1}{8}\dumbbell{1^-}{2^-}.
\end{align*}
Using the identity ${}_b\Delta=(\Id_1\otimes\mu)\circ(c\otimes\Id_1)$ as bottom-top tangles,
we can compute
$\Ztilde({}_b\Delta)=\Ztilde(\Id_1\otimes\mu)\circ\Ztilde(c\otimes\Id_1)$ and obtain the desired equality.
Alternatively, it can be computed by \cite[Lemma~5.5]{CHM08} directly.
\end{proof}

To prove the formulas for $\ZZ_{n+2}$ and $\zz_{n+2}$ in the next subsection, we here refine \cite[Lemma~4.5]{NSS22GT}.

\begin{lemma}
\label{lem:Delta}
For non-negative integers $m$, the following equalities hold.
\begin{align*}
& (\log\Ztilde(\Delta_t^m))_{\leq 2} 
= \sum_{j=1}^{m+1}\strutgraph{1^+}{j^-} +\sum_{1\leq j<k\leq m+1}\Biggl(-\frac{1}{2}\lambdagraph{0.4}{1^+}{j^-}{k^-} +\frac{1}{4}\Hgraph{1^+}{1^+}{j^-}{k^-} +\frac{1}{12}\lamlamref{1^+}{j^-}{k^-}{k^-}\Biggr) \\
& +\sum_{1\leq j<k<l\leq m+1}\frac{1}{4}\lamlamref{1^+}{j^-}{k^-}{l^-} +\sum_{1\leq j,k<l\leq m+1}\frac{1}{12}\lamlam{1^+}{j^-}{k^-}{l^-}, 
\\
& (\log\Ztilde(\Delta_b^m))_{\leq 2} 
= \strutgraph{1^+}{1^-} \\
&\hspace{-2em} +\sum_{j=2}^{m+1}\Biggl(\strutbottom{1^-}{j^-} -\frac{1}{2}\lambdagraph{0.4}{1^+}{1^-}{j^-} +\frac{1}{12}\Hgraph{1^+}{1^+}{1^-}{j^-} +\frac{1}{12}\lamlamref{1^+}{1^-}{j^-}{j^-} -\frac{1}{8}\Hbottom{1^-}{1^-}{j^-}{j^-} +\frac{1}{8}\dumbbell{1^-}{j^-}\Biggr) \\
&\hspace{-2em} +\sum_{2\leq j<k\leq m+1}\Biggl(-\frac{1}{2}\Ybottom{1^-}{j^-}{k^-} +\frac{1}{4}\lamlamref{1^+}{1^-}{j^-}{k^-} +\frac{1}{12}\lamlam{1^+}{1^-}{j^-}{k^-} -\frac{1}{12}\lamlamref{1^+}{1^-}{k^-}{j^-} +\frac{1}{12}\comb{1^-}{j^-}{j^-}{k^-} \Biggr) \\
&+\sum_{2\leq j<k<l\leq m+1}\frac{1}{4}\comb{1^-}{j^-}{k^-}{l^-} +\sum_{2\leq j<k,l\leq m+1}\frac{1}{12}\Hbottom{1^-}{j^-}{k^-}{l^-}.
\end{align*}
\end{lemma}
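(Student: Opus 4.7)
The plan is to proceed by induction on $m\geq 0$, using the recursive relations $\Delta_t^m=(\Delta_t^{m-1}\otimes \Id_1)\circ \Delta_t$ and $\Delta_b^m=(\Delta_b^{m-1}\otimes \Id_1)\circ \Delta_b$ together with the functoriality of $\Ztilde$. In the base case $m=0$, every summation range is empty and both asserted formulas collapse to $(\log\Ztilde(\Id_1))_{\leq 2}=\strutgraph{1^+}{1^-}$, which is immediate.

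The first concrete computation will be the $m=1$ case, i.e.\ explicit expressions for $(\log\Ztilde(\Delta_t))_{\leq 2}$ and $(\log\Ztilde(\Delta_b))_{\leq 2}$. Both $\Delta_t$ and $\Delta_b$ can be realized as short compositions of the elementary bottom-top tangles $\mu$, $\psi$, $Y$, $c$, $c'$ of Figure \ref{fig:bt_tangles}, in the same spirit as the decomposition ${}_b\Delta=(\Id_1\otimes\mu)\circ(c\otimes\Id_1)$ used in the preceding lemma. The LMO values of these generators up to $\ideg\leq 2$ are tabulated in \cite[Table~5.2]{CHM08}, and multiplying them in the category $\tsA$ while taking $\log_\sqcup$ yields the $m=1$ formulas.

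For the inductive step, suppose the result holds at level $m-1$. Functoriality of $\Ztilde$, combined with Lemma \ref{lem:associator} (which ensures that any associator correction $\Ztilde(P_{u,v,w})$ has trivial $\log$ in $\ideg\leq 3$), allows us to compute $\Ztilde(\Delta_t^m)$ as $\Ztilde(\Delta_t^{m-1}\otimes\Id_1)\circ\Ztilde(\Delta_t)$ modulo terms irrelevant to the $\ideg\leq 2$ part of the logarithm. The first factor is obtained from $\Ztilde(\Delta_t^{m-1})$ (known by the induction hypothesis) by shifting indices and disjoint-unioning an identity strut on the new strand. Expanding the composition in $\tsA$ as a sum over matchings of the $i^+$- and $i^-$-colored univalent vertices, extracting the $\ideg\leq 2$ part, and returning to $\log_\sqcup$ via the Baker-Campbell-Hausdorff formula (which truncates at commutator depth $2$ in this range), one identifies the contributions involving the new bottom index $(m+1)^-$ with exactly those terms of the target formula having $m+1$ as one of the summation indices. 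The case of $\Delta_b^m$ is parallel, the additional bottom-only diagrams propagating in the same fashion.

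The main obstacle is combinatorial bookkeeping. Each fractional coefficient $\tfrac{1}{2}$, $\tfrac{1}{4}$, $\tfrac{1}{8}$, $\tfrac{1}{12}$ in the target arises from summing over several matchings and collapsing the result by the AS and IHX relations. The delicate part will be verifying that no unnoticed matching or BCH commutator correction produces additional $\ideg\leq 2$ terms (for instance, stray $H$-shaped or dumbbell two-loop contributions from pairings of struts with $Y$-graphs), and that the index ranges in the sums $\sum_{j<k<l}$ versus $\sum_{j,k<l}$ emerge correctly from the unordered matchings.
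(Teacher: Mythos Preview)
Your proposal is correct and follows essentially the same route as the paper: the trivial base case $m=0$, an explicit computation of the $m=1$ case by decomposing $\Delta_t$ and $\Delta_b$ into elementary tangles whose LMO values are given in \cite[Table~5.2]{CHM08}, and then induction via the recursions $\Delta_t^m=(\Delta_t^{m-1}\otimes\Id_1)\circ\Delta_t$ and $\Delta_b^m=(\Delta_b^{m-1}\otimes\Id_1)\circ\Delta_b$. The paper uses the specific decompositions $\Delta_t=\psi^{-1}\circ\Delta$ and $\Delta_b=(\mu\otimes\Id_1)\circ(\Id_1\otimes c')$ (recalled from \cite[Lemma~4.5]{NSS22GT}), which require the tangles $\Delta$, $v_{\pm}$ from \cite{CHM08} rather than only those in Figure~\ref{fig:bt_tangles}, but this is a cosmetic difference; your invocation of Lemma~\ref{lem:associator} and the BCH expansion make explicit what the paper leaves implicit.
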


\begin{proof}
The case $m=0$ is obvious.
We first show the case $m=1$ using the definitions of $\Delta_t$ and $\Delta_b$.
Recall from the proof of \cite[Lemma~4.5]{NSS22GT} that $\Delta_t=\psi^{-1}\circ\Delta$ and $\Delta_b=(\mu\otimes\Id_1)\circ(\Id_1\otimes c')$, where 
\[
c'=(\mu\otimes\mu)\circ(\Id_1\otimes\Delta_t\otimes\Id_1)\circ(v_+\otimes v_-\otimes v_+).
\]
Then, by \cite[Table~5.2]{CHM08}, we have
\begin{align*}
(\log\Ztilde(\Delta_t))_{\leq 2} 
&=\strutgraph{1^+}{1^-} +\strutgraph{1^+}{2^-} -\frac{1}{2}\lambdagraph{0.4}{1^+}{1^-}{2^-} +\frac{1}{4}\Hgraph{1^+}{1^+}{1^-}{2^-} +\frac{1}{12}\lamlam{1^+}{1^-}{1^-}{2^-} +\frac{1}{12}\lamlamref{1^+}{1^-}{2^-}{2^-} ,\\
(\log\Ztilde(c'))_{\leq 2} &= \strutbottom{1^-}{2^-} -\frac{1}{8}\Hbottom{1^-}{1^-}{2^-}{2^-}+\frac{1}{8}\dumbbell{1^-}{2^-},\\
 (\log\Ztilde(\Delta_b))_{\leq 2} 
&= \strutgraph{1^+}{1^-} 
+\strutbottom{1^-}{2^-} -\frac{1}{2}\lambdagraph{0.4}{1^+}{1^-}{2^-} +\frac{1}{12}\Hgraph{1^+}{1^+}{1^-}{2^-}\\
&+\frac{1}{12}\lamlamref{1^+}{1^-}{2^-}{2^-} -\frac{1}{8}\Hbottom{1^-}{1^-}{2^-}{2^-} +\frac{1}{8}\dumbbell{1^-}{2^-}.
\end{align*}
These complete the proof for $m=1$.
For $m\ge2$, the proof is given by induction on $m$ using $\Delta_t^m=(\Delta_t^{m-1}\otimes \Id_1)\circ \Delta_t$
and $\Delta_b^m=(\Delta_b^{m-1}\otimes \Id_1)\circ \Delta_b$.
\end{proof}

\begin{figure}[h]
 \centering
 \includegraphics[width=0.7\textwidth]{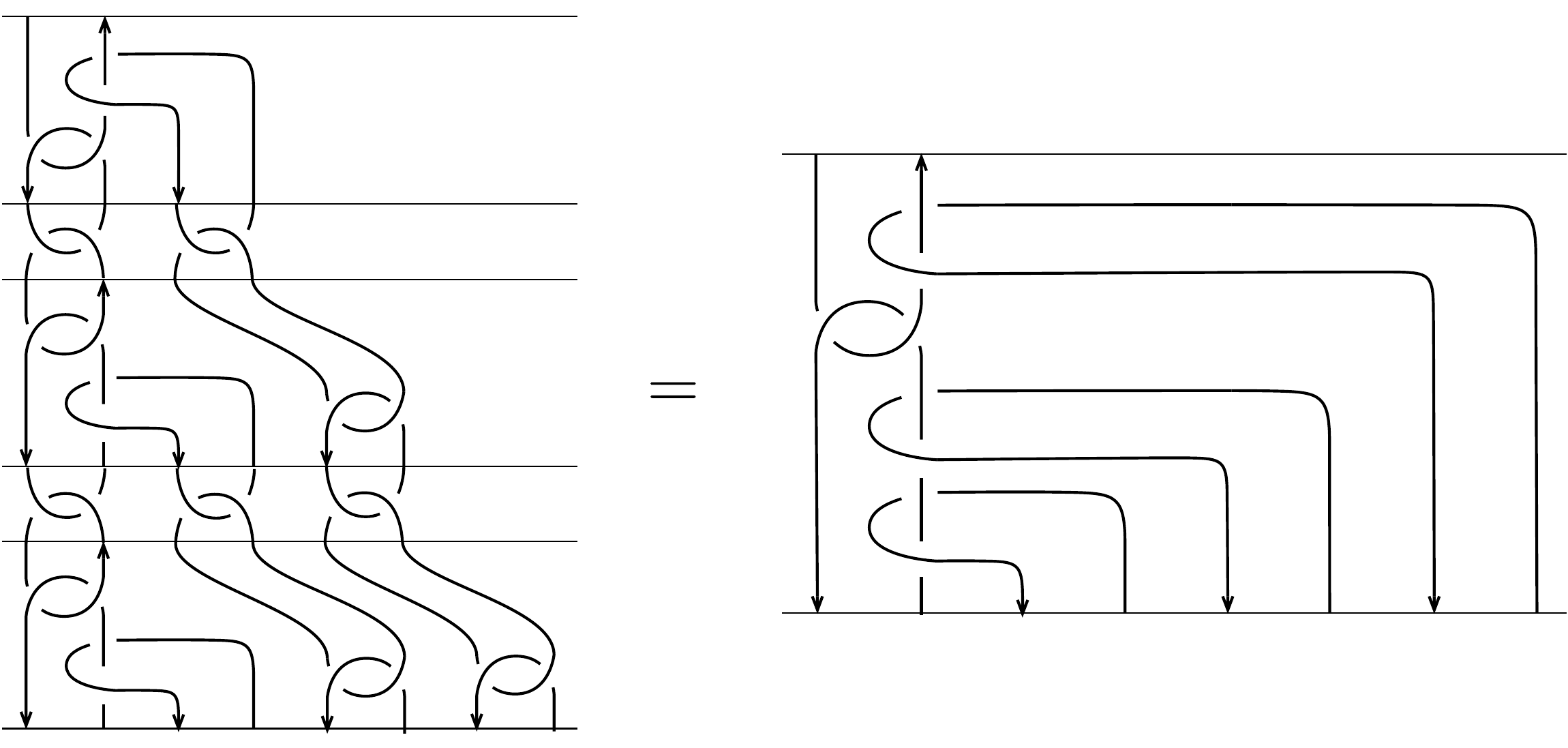}
 \caption{Bottom-top $q$-tangle $(\Delta_b^2\otimes \Id_1)\circ \Delta_t$.}
 \label{fig:tbb}
\end{figure}

The next corollary is directly deduced from Lemma~\ref{lem:Delta} and the equalities
\begin{align*}
3\lamlamref{a}{b}{c}{d} +\lamlam{a}{b}{c}{d} +\lamlam{a}{c}{b}{d} &= 2\lamlamref{a}{b}{c}{d} +2\lamlam{a}{b}{c}{d}, \\
3\lamlamref{a}{b}{c}{d} +\lamlam{a}{b}{c}{d} -\lamlamref{a}{b}{d}{c} &= 2\lamlamref{a}{b}{c}{d} +2\lamlam{a}{b}{c}{d}.
\end{align*}
The result has also been verified by a computer program written in Mathematica.
See Figure~\ref{fig:tbb} for an example of bottom-top $q$-tangles $(\Delta_b^s\otimes \Id_r)\circ\Delta_t^r$.

\begin{corollary}
\label{cor:Delta}
For non-negative integers $r$ and $s$, $(\log\Ztilde((\Delta_b^s\otimes \Id_r)\circ\Delta_t^r))_{\leq 2}$ is equal to
\begin{align*}
& \strutgraph{1^+}{1^-} +\sum_{j=s+2}^{r+s+1}\strutgraph{1^+}{j^-} +\sum_{j=2}^{s+1}\strutbottom{1^-}{j^-} 
 +\sum_{j=s+2}^{r+s+1}\frac{-1}{2}\lambdagraph{0.4}{1^+}{1^-}{j^-} \\
& +\sum_{s+2\leq j<k\leq r+s+1}\frac{-1}{2}\lambdagraph{0.4}{1^+}{j^-}{k^-} 
 +\sum_{j=2}^{s+1} \frac{-1}{2}\lambdagraph{0.4}{1^+}{1^-}{j^-}
 +\sum_{\substack{2\leq j\leq s+1\\ s+2\leq k\leq r+s+1}} \frac{1}{4}\lamlamref{1^+}{1^-}{j^-}{k^-} \\
& +\sum_{k=s+2}^{r+s+1} \biggl(\frac{1}{4}\Hgraph{1^+}{1^+}{1^-}{k^-} +\frac{1}{12}\lamlamref{1^+}{1^-}{k^-}{k^-} +\frac{1}{12}\lamlam{1^+}{1^-}{1^-}{k^-}\biggr) \\
& +\sum_{s+2\leq j<k\leq r+s+1}\biggl(\frac{1}{4}\Hgraph{1^+}{1^+}{j^-}{k^-} +\frac{1}{12}\lamlamref{1^+}{j^-}{k^-}{k^-} +\frac{1}{12}\lamlam{1^+}{j^-}{j^-}{k^-} +\frac{1}{6}\lamlamref{1^+}{1^-}{j^-}{k^-} +\frac{1}{6}\lamlam{1^+}{1^-}{j^-}{k^-} \biggr) \\
& +\sum_{s+2\leq j<k<l\leq r+s+1}\biggl(\frac{1}{6}\lamlamref{1^+}{j^-}{k^-}{l^-} +\frac{1}{6}\lamlam{1^+}{j^-}{k^-}{l^-}\biggr) \\
&+\sum_{j=2}^{s+1}\biggl(\frac{1}{12}\Hgraph{1^+}{1^+}{1^-}{j^-} +\frac{1}{12}\lamlamref{1^+}{1^-}{j^-}{j^-} -\frac{1}{8}\Hbottom{1^-}{1^-}{j^-}{j^-} +\frac{1}{8}\dumbbell{1^-}{j^-}\biggr) \\
&\hspace{-2em} +\sum_{2\leq j<k\leq s+1}\biggl(-\frac{1}{2}\Ybottom{1^-}{j^-}{k^-} 
+\frac{1}{6}\lamlamref{1^+}{1^-}{j^-}{k^-} +\frac{1}{6}\lamlam{1^+}{1^-}{j^-}{k^-} 
+\frac{1}{12}\comb{1^-}{j^-}{j^-}{k^-} +\frac{1}{12}\Hbottom{1^-}{j^-}{k^-}{k^-} \biggr) \\
& +\sum_{2\leq j<k<l\leq s+1}\biggl(\frac{1}{6}\comb{1^-}{j^-}{k^-}{l^-} +\frac{1}{6}\Hbottom{1^-}{j^-}{k^-}{l^-}\biggr).
\end{align*}
\end{corollary}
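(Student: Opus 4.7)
The plan is to proceed by direct computation from Lemma~\ref{lem:Delta}, using the functoriality and monoidality of the LMO functor. First I would apply monoidality and functoriality to write
\[
\Ztilde\bigl((\Delta_b^s\otimes\Id_r)\circ\Delta_t^r\bigr)=\bigl(\Ztilde(\Delta_b^s)\otimes\Ztilde(\Id_r)\bigr)\circ\Ztilde(\Delta_t^r),
\]
where $\Ztilde(\Id_r)$ is the exponential of the sum of struts from the identity $r$-strands. Substituting the formulas in Lemma~\ref{lem:Delta} with appropriate relabeling of colors, I would then perform the composition in $\tsA$ by gluing the bottom label $1^-$ of $\Delta_t^r$ to the top label of $\Delta_b^s$ and gluing the remaining bottom labels $2^-, \ldots, (r+1)^-$ of $\Delta_t^r$ to the top labels of $\Id_r$. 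Relabeling the surviving bottom strands so that the $\Delta_b^s$ outputs become $2^-, \ldots, (s+1)^-$ and the $\Id_r$ outputs become $(s+2)^-, \ldots, (r+s+1)^-$ brings the answer into the indexing convention used in the statement.

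Second, I would carefully pass to the $\ideg\leq 2$ part of $\log\Ztilde$ after composition. Since composition in $\tsA$ is not simply additive on logs, cross-terms arise from (i) gluings of struts of one factor with non-strut (Y- or $\lambda$-type) parts of the other, and (ii) cascaded pairs of strut-to-strut gluings. Every such contribution has internal degree at most $2$ and is directly enumerable from Lemma~\ref{lem:Delta}. The linear strut terms in the output come from strut-only contributions on each factor; the internal-degree $1$ and $2$ terms come from gluing Y-type or $\lambda$-type parts of one factor against the struts of the other, together with the contributions that survive from each factor without any new gluing. Routine but delicate bookkeeping then yields an explicit expression listing every relevant connected diagram with a rational coefficient.

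Third, the raw expression involves combinations of $\lamlam{}{}{}{}$ and $\lamlamref{}{}{}{}$ whose coefficients depend on the ordering of the glued indices. The two IHX-type identities displayed just before Corollary~\ref{cor:Delta} convert these mixed coefficient patterns into the canonical form $\tfrac{1}{6}\lamlamref{}{}{}{}+\tfrac{1}{6}\lamlam{}{}{}{}$ visible in the statement. After applying them and grouping terms by diagram type and by the indexing sets $\{j\}$, $\{j,k\}$, or $\{j,k,l\}$ (with the appropriate constraints from the ranges $2\leq\cdot\leq s+1$ or $s+2\leq\cdot\leq r+s+1$), one recovers precisely the claimed formula. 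The principal obstacle is purely combinatorial: the number of gluings is large, AS-sign conventions from cyclic orders must be tracked consistently, and symmetrization factors over multi-index sums are easy to miscount, so I would organize the calculation by the number of legs attached at each glued label and cross-check the outcome against a computer algebra implementation, as the authors indicate has been done.
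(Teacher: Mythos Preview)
Your proposal is correct and follows essentially the same approach as the paper: the paper states that the corollary is ``directly deduced from Lemma~\ref{lem:Delta}'' together with the two IHX-type identities displayed just before it, and notes a Mathematica verification. Your outline spells out exactly what this deduction amounts to---composing the two formulas from Lemma~\ref{lem:Delta} via functoriality and monoidality, tracking the cross-terms in $\log$ up to $\ideg\leq 2$, relabeling, and then normalizing the $\lamlam{}{}{}{}$/$\lamlamref{}{}{}{}$ combinations using those identities---so there is no substantive difference.
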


The next computation is a refinement of \cite[Proposition~5.8]{CHM08}.

\begin{lemma}
\label{lem:Ycob}
For the bottom-top $q$-tangle $Y$,
\begin{align*}
 (\log\Ztilde(Y))_{\leq 3} 
&= -\Ytop{1^+}{2^+}{3^+} +\frac{1}{2}\Htop{1^+}{1^+}{2^+}{3^+} +\frac{1}{2}\Htop{2^+}{2^+}{3^+}{1^+} +\frac{1}{2}\Htop{3^+}{3^+}{1^+}{2^+} \\
&\quad -\frac{1}{6}\HHtop{1^+}{1^+}{1^+}{2^+}{3^+} -\frac{1}{6}\HHtop{2^+}{2^+}{2^+}{3^+}{1^+} -\frac{1}{6}\HHtop{3^+}{3^+}{3^+}{1^+}{2^+} \\
&\quad +\frac{1}{4}\HHtop{1^+}{2^+}{1^+}{2^+}{3^+} +\frac{1}{4}\HHtop{2^+}{3^+}{2^+}{3^+}{1^+} +\frac{1}{4}\HHtop{3^+}{1^+}{3^+}{1^+}{2^+} -2\buYtop{1^+}{2^+}{3^+}.
\end{align*}
\end{lemma}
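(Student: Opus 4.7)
\emph{Proof proposal.} The plan is to extend the computation of \cite[Proposition~5.8]{CHM08} by one internal degree, following the same decomposition-and-composition strategy. I would express $Y$ as a composition of elementary bottom-top tangles whose LMO values are controlled up to $\ideg \leq 3$ by Lemma~\ref{lem:Delta}, Corollary~\ref{cor:Delta}, and \cite[Table~5.2]{CHM08}, and then assemble them via the functoriality of $\Ztilde$ and the composition rule in $\tsA$. Lemma~\ref{lem:associator} is crucial here: since $\log\Ztilde(P_{u,v,w})$ vanishes up to $\ideg \leq 3$, one may re-bracket freely, so the decomposition need only hold at the level of ordinary (unbracketed) bottom-top tangles.

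The concrete steps are as follows. First, decompose $Y$ using the elementary building blocks $\mu$, $c$, $\Delta_t$, $\Delta_b$, ${}_b\Delta$, $\psi$, and $\Id_1$ whose $\ideg \leq 3$ expansions we now control. Second, substitute the explicit series for each factor and compose them in $\tsA$ by gluing $i^\pm$-colored vertices. Since the target Jacobi diagrams carry only colors in $\{1^+, 2^+, 3^+\}$, all negatively colored vertices produced during the intermediate compositions must eventually be consumed. Third, apply $\log_\sqcup$ to the resulting group-like series and retain only the primitive (connected) part of internal degree at most $3$, then use the AS and IHX relations to bring each term into the canonical form displayed in the statement.

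A useful consistency check is the cyclic $\Z/3$-symmetry permuting the three top colors of $Y$: the target answer is manifestly invariant, so it suffices to compute a single orbit representative and apply the symmetry, cutting the workload by a factor of three. One should in addition verify that the combinatorial coefficients match the half-integer pattern visible in the statement, which would otherwise signal an arithmetic error in the Aarhus-integral normalization.

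The main obstacle is the combinatorial explosion. Each composition produces many raw terms coming from gluing strut tails onto trivalent graphs, and the $\ideg = 3$ contributions in particular involve intricate AS and IHX reductions, as well as careful tracking of which struts from Corollary~\ref{cor:Delta} feed into which legs of the degree-$2$ trivalent diagrams. As the authors have already relied on computer algebra to verify Corollary~\ref{cor:Delta}, I would delegate the bookkeeping for the composition and the AS/IHX normalization to the same tool, and hand-verify only the low-degree leading terms together with the cyclic symmetric structure of the final answer.
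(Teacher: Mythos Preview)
Your plan diverges from the paper's at its very first step, and that step contains a genuine gap. You propose to decompose $Y$ as a composition of $\mu$, $c$, $\Delta_t$, $\Delta_b$, ${}_b\Delta$, $\psi$, $\Id_1$, but $Y$ is itself one of the \emph{generators} of $\LCob_q$ in \cite[Lemma~4.4]{CHM08} and does not factor through the others: it encodes a Borromean-type clasping that none of the tangles on your list can produce. No such decomposition exists in the literature, and without one your strategy cannot get off the ground. (A direct Kontsevich-integral computation of $\Ztilde(Y)$ would be a different matter, but that is not what you described.)

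The paper proceeds indirectly instead. The $\ideg\leq 2$ part is taken from \cite[Table~5.2]{CHM08}. For $\ideg=3$, the identities $Y\circ(\eta\otimes\Id_2)=\cdots=\varepsilon\otimes\varepsilon$ force every diagram to carry all three colors, so $(\log\Ztilde(Y))_3$ is written as a combination of a basis with seven unknown coefficients $a_1,\dots,a_7$. The relation $Y\circ\psi_{2,1}\circ(\Id_2\otimes S^{2})=Y$ (your cyclic symmetry, but with the necessary antipode correction) cuts these to three. The remaining unknowns are then pinned down by an identity $M_1=M_2$ of bottom-top tangles from \cite{Mas07}: here $M_1=(\Id_1\otimes Y\otimes\Id_1)\circ M_1'$ with $M_1'$ built from pieces whose LMO values are known, while $M_2=M_2'\circ M_2''$ is computed directly via \cite[Lemma~5.5]{CHM08} without any appearance of $Y$. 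Comparing $(\log\Ztilde(M_1))_{\leq 3}$ with $(\log\Ztilde(M_2))_{\leq 3}$ gives a linear system yielding $a_1=-1/6$, $a_4=1/4$, $a_7=-2$. So the paper never decomposes $Y$; it embeds $Y$ into a larger tangle that admits an alternative, $Y$-free description.
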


\begin{proof}
We first recall that $(\log\Ztilde(Y))_{\leq 2}$ is determined in \cite[Table~5.2]{CHM08}.
By the identities 
\[
Y\circ(\eta\otimes\Id_2) = Y\circ(\Id_1\otimes\eta\otimes\Id_1) = Y\circ(\Id_2\otimes\eta) = \varepsilon\otimes\varepsilon
\]
as bottom-top tangles in \cite[Proof of Proposition~5.8]{CHM08}, each diagram in $(\log\Ztilde(Y))_{\leq 3}$ should have all $1^+$, $2^+$ and $3^+$.
We may assume $(\log\Ztilde(Y))_3$ is a linear sum of tree Jacobi diagrams of $\ideg=3$ and the 1-loop Jacobi diagram $O(1^+,2^+,3^+)$.
By the AS and STU relations, we can write $(\log\Ztilde(Y))_{3}$ of the form
\begin{align*}
& (\log\Ztilde(Y))_{3} = a_1\HHtop{1^+}{1^+}{1^+}{2^+}{3^+} +a_2\HHtop{2^+}{2^+}{2^+}{3^+}{1^+} +a_3\HHtop{3^+}{3^+}{3^+}{1^+}{2^+} \\
&\quad +a_4\HHtop{1^+}{2^+}{1^+}{2^+}{3^+} +a_5\HHtop{2^+}{3^+}{2^+}{3^+}{1^+} +a_6\HHtop{3^+}{1^+}{3^+}{1^+}{2^+} +a_7\buYtop{1^+}{2^+}{3^+}
\end{align*}
for some $a_i\in \Q$.
As in \cite[Section~5.1]{CHM08}, for $p,q\in\Z_{>0}$, let $\psi_{p,q}$ be the bottom-top tangle which represents the braiding of the monoidal category $\LCob_q$.
Explicitly, $\psi_{2,1}$ is given by $\psi_{2,1}=(\psi_{1,1}\otimes\Id_1)\circ (\Id_1\otimes \psi_{1,1})$.
Thus, we have
\[
(\log\Ztilde(\psi_{2,1}))_{\le2}=
\strutgraph{1^+}{2^-}+\strutgraph{2^+}{3^-} +\strutgraph{3^+}{1^-} 
-\frac{1}{2}\Hgraph{1^+}{3^+}{1^-}{2^-}-\frac{1}{2}\Hgraph{2^+}{3^+}{1^-}{3^-}.
\]
By \cite[Table~5.2]{CHM08}, we also have
\[
(\log\Ztilde(\Id_2\otimes S^2))_{\le2}=
\strutgraph{1^+}{1^-}+\strutgraph{2^+}{2^-} +\strutgraph{3^+}{3^-} +\frac{1}{2}\ \phigraph{3^+}{3^-} -\frac{1}{2}\Hgraph{3^+}{3^+}{3^-}{3^-}.
\]
From the identity
\[
Y\circ\psi_{2,1}\circ (\Id_2\otimes S^{2})=Y
\]
as bottom-top tangles,
we have $a_1=a_2=a_3$ and $a_4=a_5=a_6$.

To determine $a_1,a_4,a_7$, we focus on two bottom-top tangles $M_1$ and $M_2$ drawn in Figure~\ref{fig:Mas07}, which are equivalent due to \cite[Figure~4]{Mas07}.
Let us compare the values of the LMO functor.
As in Figure~\ref{fig:Mas07}, $M_1$ decompose as $(\Id_1\otimes Y\otimes\Id_1)\circ M_1'$, where
\[
M_1'=
(\Id_3\otimes\mu\otimes\Id_1)\circ(\Id_4\otimes v_+\otimes\Id_1)\circ(\Id_1\otimes\psi_{1,1}\otimes\Id_2)\circ(\Delta_b\otimes\Id_1\otimes{}_b\Delta)\circ(\Delta_t\otimes\Id_1).
\]
Then, $(\log\Ztilde(M_1'))_{\leq 2}$ is equal to
\begin{align*}
& \strutgraph{1^+}{1^-} +\strutgraph{1^+}{2^-} +\strutgraph{2^+}{5^-} +\strutbottom{1^-}{3^-} -\frac{1}{2}\strutbottom{4^-}{4^-} -\strutbottom{4^-}{5^-} \\
& -\frac{1}{2}\Ygraphref{0.4}{1^+}{1^-}{2^-} -\frac{1}{2}\Ygraphref{0.4}{1^+}{1^-}{3^-} +\frac{1}{2}\Ygraphref{0.4}{2^+}{4^-}{5^-} +\frac{1}{8}\dumbbell{1^-}{3^-} +\frac{1}{48}\dumbbell{4^-}{4^-} +\frac{5}{24}\dumbbell{4^-}{5^-} \\
& +\frac{1}{4}\Hgraph{1^+}{1^+}{1^-}{2^-} +\frac{1}{12}\Hgraph{1^+}{1^+}{1^-}{3^-} +\frac{1}{12}\lamlamref{1^+}{1^-}{2^-}{2^-} -\frac{1}{4}\lamlamref{1^+}{1^-}{3^-}{2^-} -\frac{1}{8}\Hbottom{1^-}{1^-}{3^-}{3^-} \\
& +\frac{1}{12}\lamlamref{1^+}{1^-}{3^-}{3^-} +\frac{1}{12}\lamlam{1^+}{1^-}{1^-}{2^-} +\frac{1}{12}\lamlam{2^+}{4^-}{4^-}{5^-} -\frac{1}{12}\Hgraph{2^+}{2^+}{4^-}{5^-} +\frac{1}{6}\Hbottom{4^-}{4^-}{5^-}{5^-}.
\end{align*}

Using $(\log\Ztilde(Y))_{\leq 3}$ and $(\log\Ztilde(M_1'))_{\leq 2}$, we compute $(\log\Ztilde(M_1))_{\leq 3}$ as follows:
\begin{align*}
& \strutgraph{1^+}{1^-} +\strutgraph{2^+}{2^-} -\frac{1}{2}\phigraph{1^+}{1^-} -\Ygraphref{0.4}{1^+}{1^-}{2^-} +\frac{1}{2}\Hgraph{1^+}{1^+}{1^-}{1^-} +\frac{1}{2}\lamlamref{1^+}{1^-}{2^-}{2^-} +\frac{1}{2}\Hgraph{1^+}{2^+}{1^-}{2^-} \\
& +\left(-\frac{5}{12}+5a_1-a_7\right)\buYref{0.3}{1^+}{1^-}{2^-} -\left(a_1+\frac{1}{6}\right)T(1^-,1^+,1^+,1^+,2^-) \\
& -\left(a_4+\frac{3}{4}\right)T(1^-,1^+,1^+,2^-,1^-) -\left(a_1+\frac{1}{6}\right)T(1^+,1^-,1^-,1^-,2^-) \\
& +\left(\frac{1}{4}-a_4\right)T(1^+,1^-,2^-,2^-,1^+) +\left(a_4-\frac{1}{4}\right)T(1^-,1^+,2^-,2^-,1^-) \\
& +\frac{1}{12}T(1^+,1^-,2^+,2^+,2^-) -\frac{1}{4}T(1^-,2^+,2^-,2^-,1^+) +\frac{1}{4}T(1^-,2^-,2^-,2^+,1^+) \\
& -a_1T(1^-,2^-,2^-,2^-,1^+),
\end{align*}
where
\[
T(a_1,a_2,\ldots, a_n)=
\begin{tikzpicture}[baseline=1.3ex, scale=0.25, dash pattern={on 2pt off 1pt}]
\node [left] at (0,0){$a_1$};
\node [above] at (2,2){$a_2$};
\node [below] at (4,2){$\cdots$};
\node [below] at (8,2){$\cdots$};
\node [above] at (10,2){$a_{n-1}$};
\node [right] at (12,0){$a_n$};
\draw (0,0) -- (12,0);
\draw (2,0) -- (2,2);
\draw (10,0) -- (10,2);
\end{tikzpicture}\ ,
\]
for $a_1,a_2,\ldots, a_n \in \{1^{\pm},2^{\pm},\ldots,g^{\pm}\}$.
On the other hand, $M_2$ decompose as $M_2'\circ M_2''$ as in Figure~\ref{fig:Mas07}.
Note that $M_2$ is the same as $M_1$ in \cite[Proposition~A.1]{NSS22GT}.
By \cite[Lemma~5.5]{CHM08} and computing the product by a computer program, we obtain $(\log\Ztilde(M_2'))_{\leq 3}$ as follows:
\begin{align*}
& \strutgraph{1^+}{1^-} +\strutgraph{2^+}{2^-} +\frac{1}{2}\Hgraph{1^+}{1^+}{1^-}{1^-} -\frac{1}{2}\ \phigraph{1^+}{1^-}\ .
\end{align*}
Similarly, $(\log\Ztilde(M_2''))_{\leq 3}$ is equal to
\begin{align*}
&\strutgraph{1^+}{1^-} +\strutgraph{2^+}{2^-} -\Ygraphref{0.4}{1^+}{1^-}{2^-} +\frac{1}{2}\Hgraph{1^+}{2^+}{1^-}{2^-} +\frac{1}{2}\lamlamref{1^+}{1^-}{2^-}{2^-} -\frac{1}{4}\buYref{0.3}{1^+}{1^-}{2^-} +\frac{1}{24}T(1^+,1^-,2^-,2^-,1^+) \\
&+\frac{1}{4}T(1^+,1^-,2^-,2^+,2^-) +\frac{1}{12}T(1^+,1^-,2^+,2^+,2^-) -\frac{1}{24}T(2^-,1^-,1^+,2^-,1^+) \\
&-\frac{1}{6}T(2^-,1^-,2^-,2^-,1^+) +\frac{1}{2}T(2^-,1^-,2^+,2^-,1^+) -\frac{1}{2}T(2^-,1^-,2^-,2^+,1^+).
\end{align*}
Then, we can compute $(\log\Ztilde(M_2))_{\leq 3}$ as follows:
\begin{align*}
&\strutgraph{1^+}{1^-} +\strutgraph{2^+}{2^-} -\Ygraphref{0.4}{1^+}{1^-}{2^-} +\frac{1}{2}\Hgraph{1^+}{2^+}{1^-}{2^-} +\frac{1}{2}\lamlamref{1^+}{1^-}{2^-}{2^-} +\frac{1}{2}\Hgraph{1^+}{1^+}{1^-}{1^-} -\frac{1}{2}\ \phigraph{1^+}{1^-} \\
&+\frac{3}{4}\buYref{0.3}{1^+}{1^-}{2^-} +T(1^+,1^-,1^+,2^-,1^-) +\frac{1}{12}T(1^+,1^-,2^+,2^+,2^-) \\
& -\frac{1}{4}T(1^-,2^+,2^-,2^-,1^+) +\frac{1}{4}T(1^-,2^-,2^-,2^+,1^+) -\frac{1}{6}T(2^-,1^-,2^-,2^-,1^+).
\end{align*}
Comparing $(\log\Ztilde(M_1))_{\leq 3}$ and $(\log\Ztilde(M_2))_{\leq 3}$, we have 
\[
-\frac{5}{12}+5a_1-a_7=\frac{3}{4},\ 
a_1+\frac{1}{6}=0,\ 
a_4+\frac{3}{4}=1,\ 
\frac{1}{4}-a_4=0,\ 
a_1=-\frac{1}{6},
\]
and thus $a_1=-1/6$, $a_4=1/4$, $a_7=-2$.
\end{proof}

\begin{figure}[h]
 \centering
 \includegraphics[width=\textwidth]{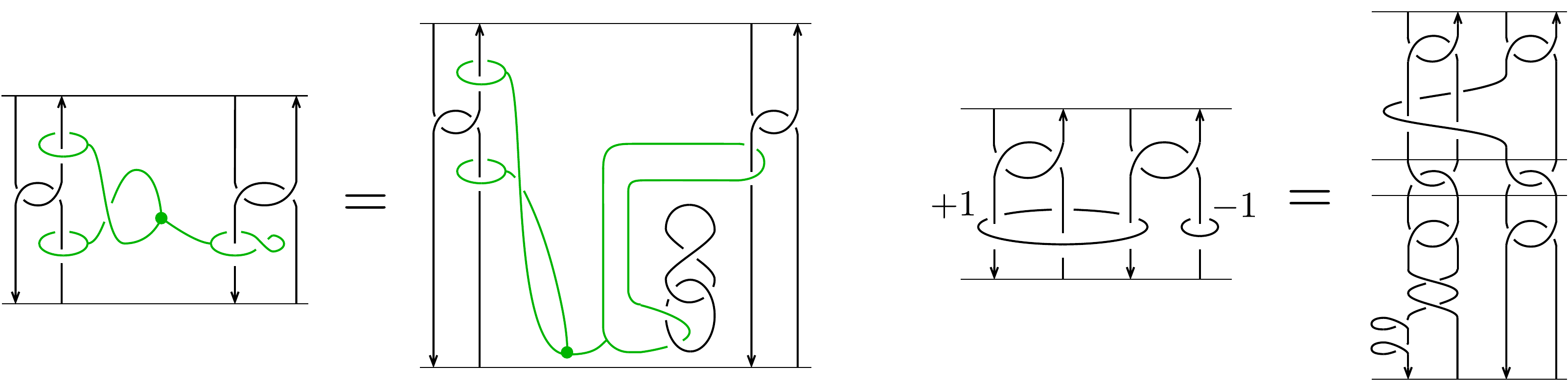}
 \caption{Two equivalent bottom-top tangles $M_1$ and $M_2=M_2'\circ M_2''$.}
 \label{fig:Mas07}
\end{figure}

\begin{remark}
Lemma~\ref{lem:Ycob} will be used in the proof of Theorem~\ref{thm:formula}.
It is worth noting that in the proof of the existence of $3$-torsion in Theorem~\ref{thm:Y6/Y7} we use a part of the formula, which is independent of Lemma~\ref{lem:Ycob}.
In fact, the homomorphism $\zz_{8,4}\circ\ss_6\colon \A^c_{6,2}\to \A^c_{8,4}\otimes\Q/\tfrac{1}{2}\Z$ used in the proof of Theorem~\ref{thm:Y6/Y7} does not depend on the $\ideg\geq 3$ part of $\log\Ztilde(Y)$.
Therefore, the existence of torsion elements of order $3$ in $Y_6\I\C/Y_7$ is shown without computer. 
\end{remark}

\subsection{Formulas of our invariants}
Recall that $\Ztilde^Y_n(\S(J))=(-1)^{n+b_0(J)+e}J$ holds for a Jacobi diagram $J\in \A^Y_n$, where $e$ is the number of internal edges of $J$ and $b_k$ denotes the $k$th Betti number (see the end of the proof of \cite[Theorem~7.11]{CHM08}).
One can easily check that $(-1)^{n+b_0(J)+e}=(-1)^{b_1(J)}$.
Let $U(J)$ denote the set of univalent vertices of $J$.
In this subsection, a pair $\{u,v\}$ for $u,v\in U(J)$ is called a \emph{leaf pair} if they are adjacent to a common vertex.
For a Jacobi diagram $J$, let $U^\pm$ denote the subset of univalent vertices colored by $i^\pm$ for some $i$, respectively.
We have $U^+\sqcup U^- =U(J)$.
Let $e(v)$ denote the edge incident to a univalent vertex $v$.

Let $J$ be a Jacobi diagram of $\ideg=n$ and, for each color $c\in\{1^\pm,\ldots,g^\pm\}$, fix a total order $\prec$ on the set of univalent vertices of $J$ colored by $c$.
We then define $\delta_0(J)$, $\delta_1(J)$, and $\delta_2(J) \in \A^Y_{n+2}\otimes\Q/\tfrac{1}{2}\Z$ by
\begin{align*}
\delta_0(J)&=
\sum_{\{u,v\}} \frac{1}{4}\delta^{Y}_{u}(\delta^{Y}_{v}(J)) 
+\sum_{v\in U^+}\Bigl(\frac{1}{4}\delta^{+}_{v}(J) +\frac{1}{12}\delta^{-}_{v}(J)\Bigr)
+\sum_{v\in U^-}\frac{1}{12}\delta^{+}_{v}(J) \\
&
+\sum_{u\in U(J),\, v\in U(\delta_u^{\shortmid\shortmid}(J))} \frac{1}{4}\delta_v^Y(\delta_u^{\shortmid\shortmid}(J)) 
+\sum_{\{u,v\}} \frac{1}{4}\delta^{\shortmid\shortmid}_{u}(\delta^{\shortmid\shortmid}_{v}(J))
+\sum_{v\in U}\frac{1}{6}\delta^{\shortmid\shortmid\shortmid}_{v}(J), 
\\
\delta_1(J)&=
\sum_{\substack{u,v,w\in U(J)\\ u\prec v}} \frac{1}{4}\lambda_{u,v}(\delta^{Y}_{w}(J)) 
+\sum_{u\prec v\in U^+} \frac{1}{4}H_{u,v}(J)
+\sum_{u\prec v\in U(J)} \frac{1}{6}H'_{u,v}(J)
\\
&+\sum_{\substack{\{u,v\}\\ \ell(u)=\ell(v)^\ast}} \frac{1}{4}H_{u,v}(J)
+\sum_{v\in U^-} \frac{1}{8}\beta_{e(v)}(J) 
+\sum_{\substack{u\in U(J),\, v,w\in U(\delta_u(J))\\ U(J)\cap \{v,w\}\neq \emptyset}}\frac{1}{4}\lambda_{v,w}(\delta^{\shortmid\shortmid}_{u}(J)), 
\\
\delta_2(J)&=
\sum_{\substack{\{\{u,v\},\{u',v'\}\}\\ u\prec v,\, u'\prec v'}} \frac{1}{4}\lambda_{u,v}(\lambda_{u',v'}(J)) 
+\sum_{u\prec v\prec w\in U} \frac{1}{6}\lambda_{u,v,w}(J), 
\end{align*}
where $u,u',v,v',w$ are distinct in each summation and $\{u,v\}$ runs over \textup{(}unordered\textup{)} pairs of univalent vertices of $J$.
Here the operations above are defined by
\begin{gather*}
\delta^Y_v\Bigl(\strutgraph{\ell(v)}{}\Bigr)=\Ygraph{0.4}{\ell(v)}{\ell(v)^\ast}{},
\quad
\delta^{+}_v(\yokostrut{\ell(v)}\ )=
\begin{tikzpicture}[scale=0.4, baseline={(0,0)}, densely dashed]
  \draw (0,0) -- (0,1) node[anchor=south] {\Small $\ell(v)^+$};
  \draw (2,0) -- (2,1) node[anchor=south] {\Small $\ell(v)^+$};
  \draw (0,0) -- (3,0);
  \draw (0,0) -- (0,-1) node[anchor=north] {\Small $\ell(v)^-$};
\end{tikzpicture},
\quad
\delta^{-}_v(\yokostrut{\ell(v)}\ )=
\begin{tikzpicture}[scale=0.4, baseline={(0,0)}, densely dashed]
  \draw (0,0) -- (0,1) node[anchor=south] {\Small $\ell(v)^+$};
  \draw (0,0) -- (3,0);
  \draw (0,0) -- (0,-1) node[anchor=north] {\Small $\ell(v)^-$};
  \draw (2,0) -- (2,-1) node[anchor=north] {\Small $\ell(v)^-$};
\end{tikzpicture},
\\
\delta^{\shortmid\shortmid}_{v}(J)\Bigl(\ \Tref{\ell(v)}\ \Bigr)=
\begin{tikzpicture}[scale=0.4, baseline={(0,0.4)}, densely dashed]
  \draw (0,0) -- (0,2) node[anchor=south] {\Small $\ell(v)$};
  \draw (2,0) -- (2,2) node[anchor=south] {\Small $\ell(v)$};
  \draw (-1,0) -- (3,0);
\end{tikzpicture}\ ,
\quad
\delta^{\shortmid\shortmid\shortmid}_{v}\Bigl(\ \Tref{\ell(v)}\ \Bigr)=
\begin{tikzpicture}[scale=0.4, baseline={(0,0.4)}, densely dashed]
  \draw (0,0) -- (0,2) node[anchor=south] {\Small $\ell(v)$};
  \draw (2,0) -- (2,2) node[anchor=south] {\Small $\ell(v)$};
  \draw (4,0) -- (4,2) node[anchor=south] {\Small $\ell(v)$};
  \draw (-1,0) -- (5,0);
\end{tikzpicture}\ ,
\\
\lambda_{u,v}\Bigl(\strutt{\ell(u)}{\ell(v)}\Bigr)=\Ygraphref{0.4}{\ell(v)}{}{}\ ,
\quad
\lambda_{u,v,w} \Bigl(\struttt{\ell(u)}{\ell(v)}{\ell(w)}\Bigr)=\lamlam{\ell(v)}{}{}{}\ +\lamlamref{\ell(v)}{}{}{}\ ,
\\
H_{u,v}\Bigl(\strutt{\ell(u)}{\ell(v)}\Bigr)=\Hgraph{\ell(u)}{\ell(v)}{}{},
\quad
H'_{u,v}\Bigl(\strutt{\ell(u)}{\ell(v)}\Bigr)=\Igraph{\ell(u)^-}{\ell(v)^+} +\Hgraph{\ell(u)^-}{\ell(v)^+}{}{},
\end{gather*}
where $(i^{\pm})^\varepsilon$ is defined to be $i^\varepsilon$ for $\varepsilon\in \{\pm 1\}$ in $\delta^{+}_{v}$, $\delta^{-}_{v}$, and $H'_{u,v}$.
Furthermore, $\beta_e(J)$ is defined by replacing an edge $e$ by $\,\phigraph{}{}\,$.
Note that $\delta_k(J)$ increases $b_1(J)$ by $k$ when $J$ is connected.

We do not use the next proposition, but it is worth stating here.

\begin{proposition}
The elements $\delta_0(J)$, $\delta_1(J)$, and $\delta_2(J)$ give rise to well-defined homomorphisms $\delta_0,\delta_1,\delta_2\colon \A^Y_{n}\to \A^Y_{n+2}\otimes\Q/\tfrac{1}{2}\Z$.
More precisely, the terms
\begin{align*}
\frac{1}{4}\delta^{Y}_{u}(\delta^{Y}_{v}(J)),\ 
\frac{1}{4}\delta^{+}_{v}(J),\ 
\frac{1}{12}\delta^{-}_{v}(J),\ 
\frac{1}{12}\delta^{+}_{v}(J),\ 
\frac{1}{4}\delta_v^Y(\delta_u^{\shortmid\shortmid}(J)),\ 
\frac{1}{4}\delta^{\shortmid\shortmid}_{u}(\delta^{\shortmid\shortmid}_{v}(J)),\ 
\frac{1}{6}\delta^{\shortmid\shortmid\shortmid}_{v}(J) 
\end{align*}
in $\delta_0(J)$,
\begin{align*}
\frac{1}{4}\lambda_{u,v}(\delta^{Y}_{w}(J)),\ 
\frac{1}{4}H_{u,v}(J),\ 
\frac{1}{6}H'_{u,v}(J),\ 
\frac{1}{4}H_{u,v}(J),\ 
\frac{1}{8}\beta_{e(v)}(J),\ 
\frac{1}{4}\lambda_{v,w}(\delta^{\shortmid\shortmid}_{u}(J)) 
\end{align*}
in $\delta_1(J)$, and
\begin{align*}
\frac{1}{4}\lambda_{u,v}(\lambda_{u',v'}(J)),\ 
\frac{1}{6}\lambda_{u,v,w}(J), 
\end{align*}
in $\delta_2(J)$ are invariant under the AS, IHX, and self-loop relations and independent of the total order $\prec$.
\end{proposition}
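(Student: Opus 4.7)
The plan is to verify, summand by summand in the three displayed expressions, that each prescribed local surgery produces a well-defined element of $\A^Y_{n+2}\otimes\Q/\tfrac12\Z$ which respects the AS, IHX, and self-loop relations on $J$ and does not depend on the total order $\prec$. Since every operation is local—a fixed combinatorial replacement at a univalent vertex, a leaf pair, an internal edge, or a small ordered tuple of univalent vertices—any AS, IHX, or self-loop move supported strictly away from the replacement site commutes with the operation on the nose. The proof therefore reduces to a case analysis of what happens when the relation is applied at or adjacent to the affected region.

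I would first dispose of the easy invariances. The self-loop relation passes through every operation, since no replacement disconnects or removes a preexisting loop, and the gadgets attached by $\delta^Y, \delta^\pm, \delta^{\shortmid\shortmid}, \delta^{\shortmid\shortmid\shortmid}, \lambda, H, H', \beta$ never create one. For the AS relation at a trivalent vertex $t$ of $J$: if $t$ is untouched by the operation, the sign change is transported unchanged to the output; if $t$ is the trivalent vertex adjacent to a leaf pair $\{u,v\}$ entering a term like $\delta^Y_u(\delta^Y_v(J))$ or $\lambda_{u,v}(J)$, one checks that the operation replaces a symmetric neighborhood of $t$ so that the induced sign on the output is again the correct one. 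Independence of $\prec$ is handled by a parity argument: transposing two comparable vertices $u,v$ inside a term such as $\lambda_{u,v}(J)$ or $H_{u,v}(J)$ flips the sign of the output by AS at the freshly created trivalent vertex, so the corresponding summand is sent to its negative and the difference between the two orderings equals twice itself; the coefficients $\tfrac14,\tfrac16,\tfrac18,\tfrac1{12}$, combined with the projection to $\Q/\tfrac12\Z$, absorb this discrepancy. The same mechanism, enhanced by the $S_3$-symmetry that is already built into $\lambda_{u,v,w}$ via its two-graph expression, covers the terms appearing in $\delta_2$ and the unordered leaf-pair sums inside $\delta_0$.

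The main obstacle is the IHX relation at an internal edge $e$ of $J$ incident to, or lying between the legs of, one of the affected univalent vertices or leaf pairs. There each of the three terms $I,H,X$ yields its own output, and one must verify that the alternating sum of the three outputs vanishes modulo the AS, IHX, and self-loop relations in $\A^Y_{n+2}$. A direct inspection shows that, after an AS-shuffle at the newly produced trivalent vertex, this alternating sum becomes a genuine IHX relation applied at the freshly created edge, or telescopes into a short combination of such relations. The verification is most delicate for the composite operations $\delta^Y_u\circ\delta^Y_v$, $\delta^Y_v\circ\delta^{\shortmid\shortmid}_u$, $\delta^{\shortmid\shortmid}_u\circ\delta^{\shortmid\shortmid}_v$, and $\lambda_{u,v}\circ\delta^Y_w$, where the IHX move may interact with both surgery sites simultaneously; here one iterates the single-operation argument, and the $\tfrac12\Z$ reduction absorbs the residual ambiguity coming from the order in which the two operations are performed. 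Once these local identities are checked term-by-term, the asserted well-definedness of $\delta_0,\delta_1,\delta_2$ follows immediately.
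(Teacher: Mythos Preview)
Your overall strategy---check each local operation for AS/IHX/self-loop invariance and for independence from $\prec$---matches the paper's, but there is a genuine gap in your $\prec$-independence argument, and you have the emphasis backwards.

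Your parity argument says: swapping $u\prec v$ changes the output by a sign via AS, so the two orderings differ by twice the term, and the coefficient together with reduction modulo $\tfrac12\Z$ kills this. That works when the coefficient is $\tfrac14$ (since $\tfrac{2}{4}=\tfrac12\equiv 0$), but it fails for the $\tfrac16$ terms: $\tfrac{2}{6}=\tfrac13\not\equiv 0$ in $\Q/\tfrac12\Z$. In particular, $\tfrac16 H'_{u,v}(J)$ is not handled by your argument. The paper deals with this term by applying IHX to the I-shaped piece of $H'_{u,v}$ and using the arithmetic identity $\tfrac13=-\tfrac16$ in $\Q/\tfrac12\Z$ to rewrite $\tfrac16 H'_{u,v}$ as $-\tfrac16$ times a sum of two $H$-diagrams with the top labels $\ell(u)^-,\ell(v)^+$ interchanged; since $\ell(u)=\ell(v)$ whenever $u\prec v$, this expression is manifestly symmetric in $u,v$. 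The term $\tfrac16\lambda_{u,v,w}$ is handled by a similar IHX-plus-arithmetic argument; your remark about ``$S_3$-symmetry built into $\lambda_{u,v,w}$'' gestures at this but does not supply it. Note also that the coefficients $\tfrac18$ and $\tfrac1{12}$ you list appear only on terms that do not involve $\prec$ at all, so invoking them in the parity argument is a red herring.

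Separately, you identify IHX at an edge adjacent to the surgery site as ``the main obstacle.'' The paper does not: since every operation is a local graft, applying IHX on $J$ and then operating yields three outputs whose alternating sum is itself an IHX relation in $\A^Y_{n+2}$, so compatibility is essentially automatic. The substantive content of the proposition is precisely the $\prec$-independence, and that is where the specific identities $-\tfrac14=\tfrac14$ and $\tfrac13=-\tfrac16$ in $\Q/\tfrac12\Z$ do the work.
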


\begin{proof}
By the AS relation and the equality $-\frac{1}{4}=\frac{1}{4}\in\Q/\frac{1}{2}\Z$, the terms
$\frac{1}{4}\delta_v^Y(\delta_u^{\shortmid\shortmid}(J))$,
$\frac{1}{4}\delta^{\shortmid\shortmid}_{u}(\delta^{\shortmid\shortmid}_{v}(J))$,
$\frac{1}{6}\delta^{\shortmid\shortmid\shortmid}_{v}(J)$,
$\frac{1}{4}H_{u,v}(J)$,
$\frac{1}{4}\lambda_{u,v}(\lambda_{u',v'}(J))$,
and $\frac{1}{4}\lambda_{v,w}(\delta^{\shortmid\shortmid}_{u}(J))$ are well-defined.
Noting that $u\prec v\in U(J)$ implies that $\ell(u)=\ell(v)$, we also see that $\frac{1}{4}\lambda_{u,v}(\delta^{Y}_{w}(J))$ is well-defined.

By the IHX relation and the equality $\frac{1}{3}=-\frac{1}{6}\in\Q/\frac{1}{2}\Z$, we have
\[
\frac{1}{6}H'_{u,v}\Bigl(\strutt{\ell(u)}{\ell(v)}\Bigr)
= \frac{1}{6}\Igraph{\ell(u)^-}{\ell(v)^+} +\frac{1}{6}\Hgraph{\ell(u)^-}{\ell(v)^+}{}{}
= -\frac{1}{6}\Hgraph{\ell(v)^+}{\ell(u)^-}{}{} -\frac{1}{6}\Hgraph{\ell(u)^-}{\ell(v)^+}{}{}
\]
in $\A_{n+2}^Y\otimes\Q/\frac{1}{2}\Z$.
Since $\ell(u)=\ell(v)$ when $u\prec v\in U(J)$, we have $\frac{1}{6}H'_{u,v}(J)=\frac{1}{6}H'_{v,u}(J)$, and $\frac{1}{6}H'_{u,v}(J)$ is well-defined.
In a similar way, we see that $\frac{1}{6}\lambda_{u,v,w}(J)$ does not depend on the choice of a total order
and is well-defined.
The rest of the terms $\frac{1}{4}\delta^{Y}_{u}(\delta^{Y}_{v}(J))$,
$\frac{1}{4}\delta^{+}_{v}(J)$,
$\frac{1}{12}\delta^{-}_{v}(J)$,
$\frac{1}{12}\delta^{+}_{v}(J)$,
and $\frac{1}{8}\beta_{e(v)}(J)$ 
are apparently well-defined.
\end{proof}

\begin{example}
Let $J=T(1^+,2^+,2^-,1^+)$.
Then, $\delta_2(J)=0$ and
\begin{align*}
\delta_1(J)
&=\frac{1}{4}O(1^+,2^+,2^-,2^+) +\frac{1}{4}O(1^+,2^-,2^+,2^-) +\frac{1}{6}O(1^+,1^-,2^+,2^-) \\
&+\frac{1}{3}O(1^+,1^-,2^-,2^+) +\frac{1}{4}O(1^+,1^+,2^+,2^-) +\frac{1}{4}O(1^+,2^+,1^+,2^-), \\
\delta_0(J)
&=\frac{1}{4}T(2^-,1^-,1^+,1^+,1^-,2^+) +\frac{1}{12}T(2^-,1^-,1^-,1^+,1^+,2^+) +\frac{1}{12}T(2^-,1^-,1^+,1^-,1^+,2^+) \\
&+\frac{1}{4}T(1^-,1^+,1^+,1^+,2^+,2^-) +\frac{1}{12}T(2^-,1^+,1^-,1^+,1^-,2^+) +\frac{1}{12}T(2^-,1^+,1^+,1^-,1^-,2^+) \\
&-\frac{1}{12}T(2^-,1^+,1^-,1^-,1^+,2^+) +\frac{1}{12}T(2^-,1^+,1^+,1^+,1^+,2^+) +\frac{1}{6}T(1^+,2^-,2^-,2^+,2^-,1^+) \\
&+\frac{1}{4}T(1^+,2^-,2^+,2^+,2^-,1^+) +\frac{1}{6}T(1^+,2^+,2^-,2^+,2^+,1^+).
\end{align*}
\end{example}

Now, we can show our main result in this paper.

\begin{theorem}
\label{thm:formula}
Let $J \in \A^c_{n}$ and, for each color $c\in\{1^\pm,\ldots,g^\pm\}$, fix a total order $\prec$ on the set of univalent vertices of $J$ colored by $c$.
Then, 
\[
(-1)^{b_1(J)+1}\zz_{n+2}(\ss_n(J)) = \delta_0(J)+\delta_1(J)+\delta_2(J) \in \A^c_{n+2}\otimes\Q/\tfrac{1}{2}\Z.
\]

Moreover, for $J \in \A^Y_{n}$, 
\begin{align*}
(-1)^{b_1(J)}\ZZ_{n+2}(\S_n(J))
&= \delta_0(J)+\delta_1(J)+\delta_2(J) \\
&\quad 
+\sum_{Y}\biggl(
\frac{1}{4}\delta^Y(J\sqcup Y)
+\frac{1}{4}\lambda(J\sqcup Y)
+\frac{1}{3!}J\sqcup Y^{\sqcup 2}
+\frac{1}{4}\delta^{\shortmid\shortmid}(J\sqcup Y)
\biggr),
\end{align*}
in $\A^Y_{n+2}\otimes\Q/\tfrac{1}{2}\Z$, where $Y$ runs over connected components of $J$ such that $\ideg(Y)=1$, and
\[
\delta^{Y}(J)=\sum_{v\in U(J)} \delta^{Y}_{v}(J),\
\lambda(J)=\sum_{u\prec v\in U(J)} \lambda_{u,v}(J),\
\delta^{\shortmid\shortmid}(J)=\sum_{v\in U(J)} \delta^{\shortmid\shortmid}_{v}(J).
\]
\end{theorem}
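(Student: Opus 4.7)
\medskip

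\noindent\textbf{Proof plan for Theorem~\ref{thm:formula}.}

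The overall strategy is a direct computation of $\Ztilde^Y$ on the homology cylinder $(\Sigma_{g,1}\times[-1,1])_{G(J)}$ through its realization as an explicit morphism in $\LCob_q$, and then extraction of the $\ideg=n+2$ part modulo $\tfrac{1}{2}\Z$. First, I would write the clasper $G(J)$ inside a standard handlebody decomposition: each leaf colored $c\in\{1^\pm,\dots,g^\pm\}$ is pushed off a canonical meridian/longitude of the corresponding handle by a suitable $\Delta_t^r$ (for distinct leaves with the same color) and each trivalent vertex is replaced by a $Y$-tangle. Concretely, the cobordism representing $\ss_n(J)$ is, up to pre- and postcomposition by associator pieces $P_{u,v,w}$, a composition of copies of $\Id$, $\Delta_t^r$, $\Delta_b^s$, and exactly $n$ copies of $Y$, one for each trivalent vertex of $J$. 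By Lemma~\ref{lem:associator} the associator pieces contribute $0$ to $\log\Ztilde^Y$ in internal degree $\le3$, so they only reshuffle labels.

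Next, I would apply Corollary~\ref{cor:Delta} to the input/output doubling pieces and Lemma~\ref{lem:Ycob} to each of the $n$ $Y$-factors. Since every connected component of $\log\Ztilde^Y(Y)$ has $\ideg\ge1$ and includes all three legs $1^+,2^+,3^+$, and every non-strut component of $\log\Ztilde^Y(\Delta_t^r\otimes\Delta_b^s)$ has $\ideg\ge1$, the contributions of $\ideg=n+2$ to $(\log\Ztilde^Y((\Sigma_{g,1}\times[-1,1])_{G(J)}))_{n+2}$ split naturally by how the $\ideg\ge1$ pieces are distributed among the atoms:
\begin{itemize}
\item[(a)] $n$ atoms of $\ideg=1$ from the $Y$-factors (the $-\Ytop{1^+}{2^+}{3^+}$ term) together with one $\ideg=2$ atom from a single doubling/$Y$-factor; these produce $\delta_0(J)$;
\item[(b)] $n$ atoms of $\ideg=1$ from the $Y$-factors together with two $\ideg=1$ atoms from doubling/$Y$-factors; these produce $\delta_1(J)$;
\item[(c)] $n+2$ atoms of $\ideg=1$ (only possible when the extra two come from strut-to-non-strut contractions creating loops); these produce $\delta_2(J)$.
\end{itemize}
Each term in the definitions of $\delta_0,\delta_1,\delta_2$ is then matched with a single kind of contribution: the operations $\delta_v^Y,\delta_v^\pm,\delta_v^{\shortmid\shortmid},\delta_v^{\shortmid\shortmid\shortmid}$ correspond exactly to the $\ideg=2$ connected pieces of $\log\Ztilde^Y(\Delta_b)$ with one, two, or three legs on a common handle, the $\lambda$- and $H$-operations come from pairs of $\lambda$-shaped pieces of $\log\Ztilde^Y(\Delta_t)$ or $\log\Ztilde^Y(\Delta_b)$, and $\beta_e$ corresponds to the $\frac{1}{8}\,\phigraph{}{}\,$ term. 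The coefficients $\frac{1}{4},\frac{1}{12},\frac{1}{6},\frac{1}{8}$ can be read off directly from Corollary~\ref{cor:Delta} and Lemma~\ref{lem:Ycob} after the appropriate IHX/AS/self-loop reductions, and the overall sign $(-1)^{b_1(J)+1}$ comes from $\Ztilde^Y_n(\S(J))=(-1)^{b_1(J)}J$ combined with the sign in Remark~\ref{rem:logZ}.

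For the second formula, $\S_n(J)$ may involve $J$ with $\ideg=1$ connected components $Y$, and then $\Ztilde^Y_1$ is non-trivial; consequently $(\log\Ztilde^Y)_{n+2}$ is no longer equal to $\Ztilde^Y_{n+2}$ and additional "coproduct correction" terms $\tfrac{1}{2}\Ztilde^Y_1\sqcup\Ztilde^Y_{n+1}$ etc. appear. These are precisely the four summands over $Y$ in the second formula: $\delta^Y$, $\lambda$, $\delta^{\shortmid\shortmid}$ from pairing a single $Y$-strut with one $\ideg=2$ atom, and the $\frac{1}{3!}J\sqcup Y^{\sqcup 2}$ from pairing $Y$ with itself. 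I would derive these by first computing $(\log\Ztilde^Y)_{n+2}$ via Remark~\ref{rem:logZ} applied in the generality of $\F_n\I\C/\F_{n+1}\I\C$ (where $\Ztilde^Y_k$ may be nonzero for $k<n$) and then combining with the first formula.

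The main obstacle is the combinatorial bookkeeping: one must carefully sum over the many ways that the $\ideg\ge1$ pieces of the atoms can be contracted with the $n$ tree-pieces $-\Ytop{1^+}{2^+}{3^+}$ coming from the $Y$-factors, track which contractions collapse to Jacobi diagrams on $\{1^\pm,\dots,g^\pm\}$ matching each term of $\delta_0,\delta_1,\delta_2$, and verify that the apparent choices (total order $\prec$, reduction modulo $\tfrac12\Z$, assignment of a leaf pair to a single operation) are well-defined. The $\tfrac12\Z$ quotient is essential: many individual coefficients differ by $\tfrac12$ depending on combinatorial conventions, and one must check that all such ambiguities collapse. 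Careful use of the AS/IHX/self-loop identities — as already illustrated in the well-definedness proposition preceding the theorem — should handle this.
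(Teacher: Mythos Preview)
Your global strategy—realize $G(J)$ as a composition of $\Id$, $\Delta_t^r$, $\Delta_b^s$, associators, and $n$ copies of $Y$, then feed in Lemma~\ref{lem:associator}, Corollary~\ref{cor:Delta}, and Lemma~\ref{lem:Ycob}—is exactly the paper's approach, but two parts of your plan would derail the computation. First, the decomposition is incomplete: internal edges of $J$ (edges joining two trivalent vertices) are realized by copies of the tangle $c$, so the bottom-top tangle is $(\Id_g\otimes Y^{\otimes n})\circ\Psi\circ\bigl((\bigotimes_i(\Delta_b^{s_i}\otimes\Id_{r_i})\circ\Delta_t^{r_i})\otimes c^{\otimes\#E}\bigr)$; the $c$-factors supply the struts that link the $Y$'s along internal edges, the sign $(-1)^{\#E}$, and the $\pm\frac18\beta_e(J)$ contributions. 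More seriously, your (a)/(b)/(c) bookkeeping does not correspond to $\delta_0/\delta_1/\delta_2$. The $\delta_k$ are indexed by the increase in $b_1$, whereas the natural splitting in the computation is by $d\in\{0,1,2\}$ in $(\Id_g\otimes(\emptyset-\Ztilde(Y))^{\otimes n}_{n+d})\circ\Ztilde_{2-d}(\Psi\circ\gamma)$, i.e., by how the two extra units of $\ideg$ are distributed between the $Y^{\otimes n}$ part and the rest. These two gradings are transversal: for instance $\frac14\lambda_{u,v}(\lambda_{u',v'}(J))$ and $\frac{1}{6}\lambda_{u,v,w}(J)$, both in $\delta_2(J)$, arise at $d=0$ (from products of two $\ideg=1$ pieces of $\log\Ztilde(\Psi\circ\gamma)$), while $\frac{1}{6}\delta^{\shortmid\shortmid\shortmid}_v(J)\in\delta_0(J)$ arises at $d=2$ (one $Y$-factor contributing its $\ideg=3$ part from Lemma~\ref{lem:Ycob}), and $\frac18\beta_{e(v)}(J)\in\delta_1(J)$ comes from a single $\ideg=2$ dumbbell at $d=0$. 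Your scheme would misplace all of these.

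Second, the logical order between the two formulas is reversed. The paper proves the $\ZZ_{n+2}$ formula first (for possibly disconnected $J$) and then deduces the $\zz_{n+2}$ formula from it via Remark~\ref{rem:logZ}. The extra sum over $\ideg=1$ components $Y$ in the $\ZZ_{n+2}$ formula is \emph{not} a ``$\log$ correction'': by definition $\ZZ_{n+2}([x])=\Ztilde^Y_{n+2}(x)$ involves no logarithm. Those extra terms arise because $(\emptyset-\Ztilde(Y))^{\otimes n}$ contains disconnected contributions coming from $\exp_\sqcup$ of $\log\Ztilde(Y)$; when $J$ has an $\ideg=1$ component these survive in the $d=1$ and $d=2$ pieces and yield precisely the four summands $\frac14\delta^Y(J\sqcup Y)$, $\frac14\lambda(J\sqcup Y)$, $\frac14\delta^{\shortmid\shortmid}(J\sqcup Y)$, $\frac{1}{3!}J\sqcup Y^{\sqcup2}$. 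Trying to recover them as corrections to a connected-case formula will not work.
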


\begin{proof}
The following argument is a refinement of the proof of \cite[Theorem~1.1]{NSS22GT}.
If we prove the formula for $\ZZ_{n+2}$, then that for $\zz_{n+2}$ is a direct consequence.
Indeed, for a connected Jacobi diagram $J \in \A^c_n$, if $n\geq 2$, we would have
\begin{align*}
(-1)^{b_1(J)+1}\zz_{n+2}(\ss(J)) 
&= (-1)^{b_1(J)+1}\Ztilde^Y_{n+2}(\ss(J)) \\
&= (-1)^{b_1(J)}\Ztilde^Y_{n+2}(\S(J)) \\
&= \delta_0(J)+\delta_1(J)+\delta_2(J),
\end{align*}
where the first equality comes from Remark~\ref{rem:logZ} and the last one from the fact that $J$ has no connected component of $\ideg=1$.
In the case $n=1$, the formula for $\ZZ_{3}$ gives
\begin{align*}
(-1)^{0+1}\zz_{1+2}(\ss(J)) 
&= -\Ztilde^Y_{3}(\ss(J)) -\frac{1}{3}J^{\sqcup 3} \\
&= (-1)^{0}\Ztilde^Y_{3}(\S(J))+\frac{1}{3!}J^{\sqcup 3} \\
&= \delta_0(J)+\delta_1(J)+\delta_2(J)
\end{align*}
since $-\frac{1}{3}=\frac{1}{6}$ in $\Q/\tfrac{1}{2}\Z$ and $\frac{1}{4}\delta^Y(J\sqcup J)=0$, $\frac{1}{4}\delta^{\shortmid\shortmid}(J\sqcup J)=0$ in $\A^Y_{3}\otimes\Q/\tfrac{1}{2}\Z$.

Let us prove the formula for $\ZZ_{n+2}$.
Let $J \in \A^Y_n$ be a Jacobi diagram and we draw $J$ as in Figure~\ref{fig:Jacobi} according to $\prec$.
Let $e_{g+1},e_{g+2},\dots,e_{g+3n}$ denote the half-edges incident to the trivalent vertices of $J$.
Let $N=\{g+1,g+2,\dots,g+3n\}$.
Define $V$, $E$, $L^t_i$ and $L^b_i$ for $i=1,\dots,g$ by
\begin{align*}
V&= \left\{(j,k,l)\in N^3\Bigm| \parbox{17em}{$e_j$, $e_k$, and $e_l$ are the three half-edges incident to a trivalent vertex clockwise}\right\}\Big/\text{cyclic permutation}, \\
E&= \left\{(j,k)\in N^2\Bigm| \parbox{17em}{$e_j$ and $e_k$ are the two half-edges of an edge connecting two trivalent vertices}\right\}\Big/\text{permutation}, \\
L^t_i&= \{j\in N\mid \text{the univalent vertex of the edge containing $e_j$ is colored with $i^+$}\}, \\
L^b_i&= \{j\in N\mid \text{the univalent vertex of the edge containing $e_j$ is colored with $i^-$}\}.
\end{align*}
Let $r_i= \# L^t_i$ and $s_i= \# L^b_i$.
For $j,k \in L^t_i$ (or $j,k \in L^b_i$), we write $j\prec k$ if $v(e_j)\prec v(e_k)$, where $v(e_j)$ is the univalent vertex incident to the edge containing the half-edge $e_j$.

\begin{figure}[h]
 \centering
 \includegraphics[width=0.6\textwidth]{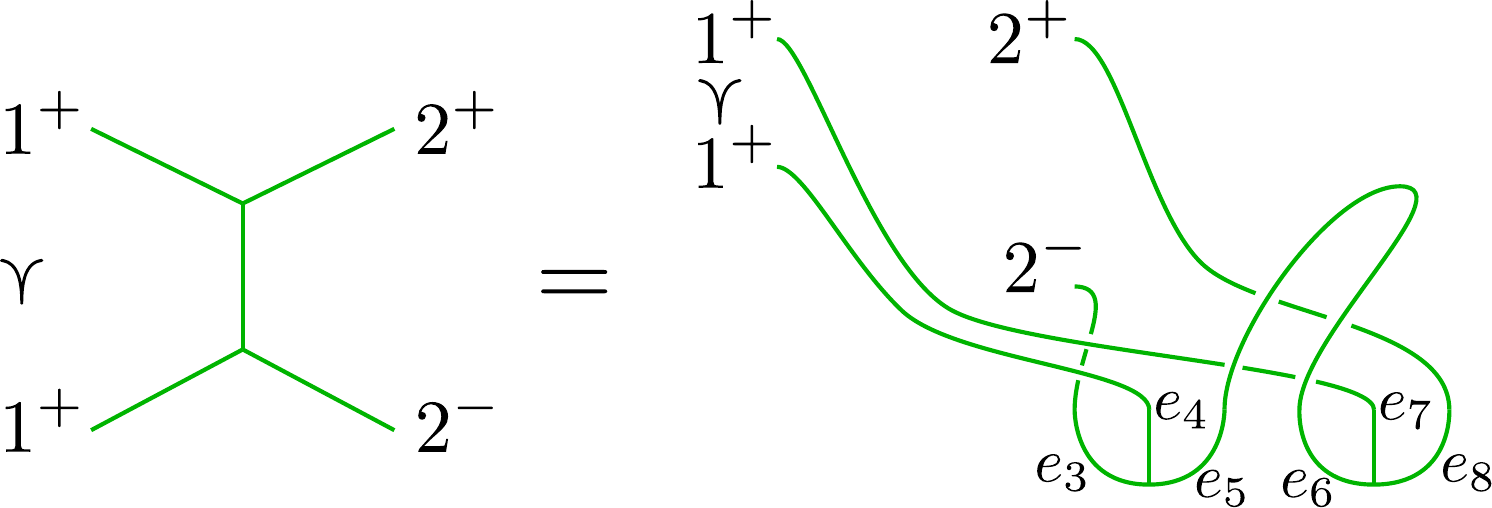}
 \caption{A specific drawing of a Jacobi diagram with $V=\{(3,4,5),(6,7,8)\}$, $E=\{(5,6)\}$, $L^t_1=\{4,7\}$, and $4\prec 7$.}
 \label{fig:Jacobi}
\end{figure}

\begin{figure}[h]
 \centering
 \includegraphics[width=0.9\textwidth]{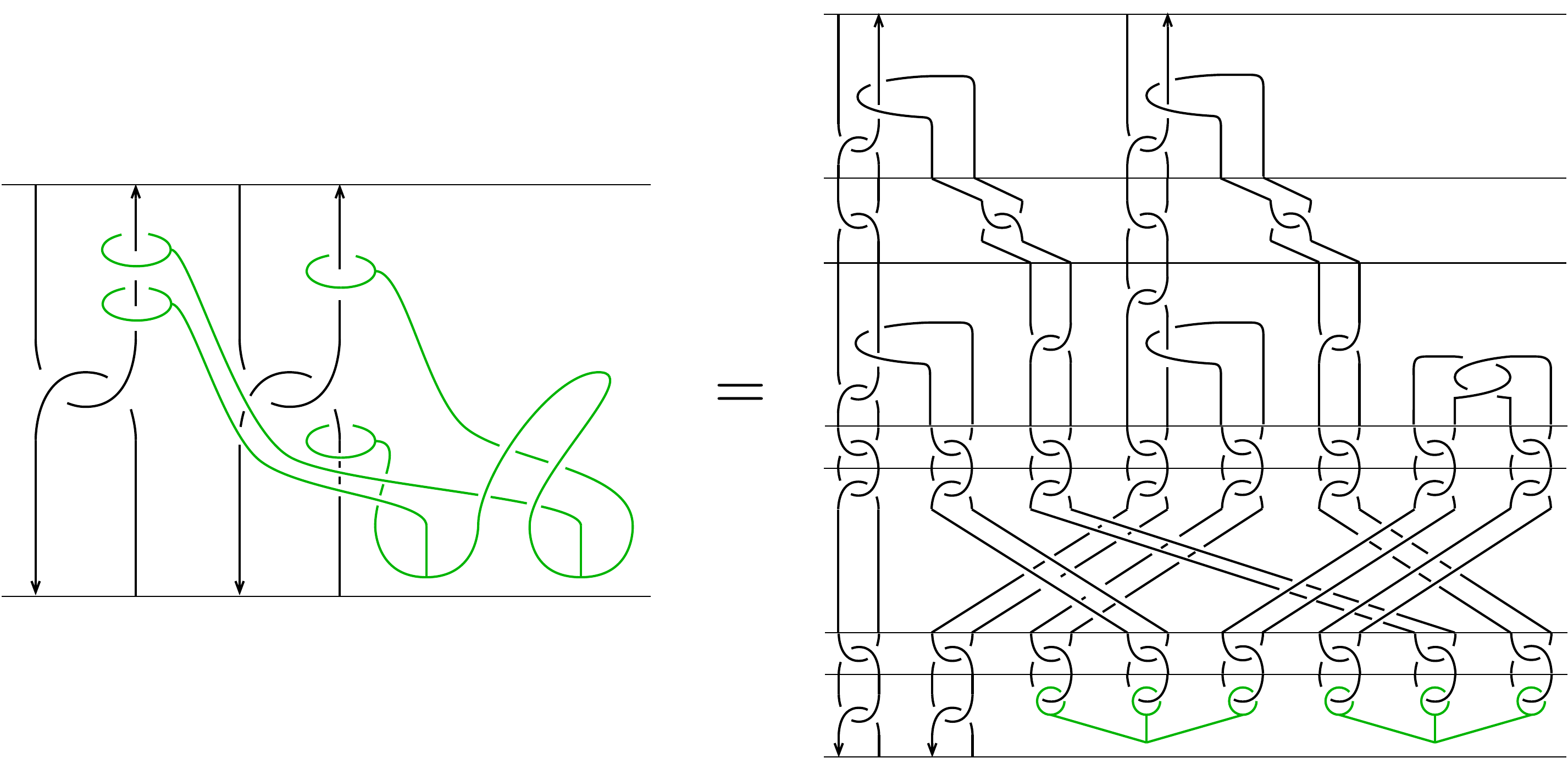}
 \caption{The decomposition corresponding to Figure~\ref{fig:Jacobi}.}
 \label{fig:decomposition}
\end{figure}

Let $G$ be a graph clasper realizing $J$.
By the well-definedness of $\S$, we may assume that $G$ is obtained from a specific drawing of $J$ as in Figure~\ref{fig:Jacobi}.
Then the corresponding bottom-top tangle $([-1,1]^3,\gamma_g)_G$ decomposes as
\[
(\Id_g\otimes Y^{\otimes n})\circ \Psi\circ \left(\biggl(\bigotimes_{i=1}^{g}((\Delta_b^{s_i}\otimes\Id_{r_i})\circ \Delta_t^{r_i})\biggr)\otimes c^{\otimes\# E}\right),
\]
where $\Psi$ consists of $\psi_{1,1}^{\pm 1}$, $P_{u,v,w}^{\pm 1}$, and $\Id_m$ in \cite{CHM08}.
See Figure~\ref{fig:decomposition} for an example of the decomposition.
We write $\gamma$ for the third factor of the decomposition.

By the definition of $\S$ and \cite[Proof of Theorem~7.11]{CHM08}, one has
\[
\Ztilde(\S(J)) = \sum_{G'\subset G}(-1)^{|G'|}\Ztilde((\Sigma_{g,1}\times[-1,1])_{G'})
 = (-1)^{n+|G|}\left(\Id_g\otimes(\emptyset - \Ztilde(Y))^{\otimes n}\right)\circ \Ztilde(\Psi\circ \gamma).
\]
It follows from $\ideg(\emptyset - \Ztilde(Y))\geq 1$ that
\[
(-1)^{b_1(J)}\Ztilde_{n+2}(\S(J)) = (-1)^{\# E}\sum_{d=0}^{2}\left(\Id_g\otimes(\emptyset - \Ztilde(Y))^{\otimes n}_{n+d}\right)\circ \Ztilde_{2-d}(\Psi\circ \gamma).
\]

Since $(\log\Ztilde(\Psi))_{\leq 2}$ is a sum of $H$-graphs with coefficients $\pm\frac{1}{2}$ and struts, the composition of $(\log\Ztilde(\Psi))_2$ and struts with integral coefficients is zero in $\A^c_{2}\otimes\Q/\tfrac{1}{2}\Z$.
Now, in $\A^c_{\leq 1}\otimes\Q\oplus\A^c_{2}\otimes\Q/\tfrac{1}{2}\Z$, Corollary~\ref{cor:Delta} shows that
\begin{align*}
&(\log\Ztilde(\Psi\circ \gamma))_{\leq 2} \\
&= \sum_{i=1}^{g} \Biggl( \strutgraph{i^+}{i^-} +\sum_{j\in L^t_i}\strutgraph{i^+}{j^-} \\
&+\sum_{k\in L^t_i}\biggl(-\frac{1}{2}\lambdagraph{0.4}{i^+}{i^-}{k^-} +\frac{1}{4}\Hgraph{i^+}{i^+}{i^-}{k^-} +\frac{1}{12}\lamlamref{i^+}{i^-}{k^-}{k^-} +\frac{1}{12}\lamlam{i^+}{i^-}{i^-}{k^-}\biggr) \\
&\hspace{-2em} +\sum_{j\prec k\in L^t_i}\biggl(-\frac{1}{2}\lambdagraph{0.4}{i^+}{j^-}{k^-} +\frac{1}{4}\Hgraph{i^+}{i^+}{j^-}{k^-} +\frac{1}{12}\lamlamref{i^+}{j^-}{k^-}{k^-} +\frac{1}{12}\lamlam{i^+}{j^-}{j^-}{k^-} +\frac{1}{6}\lamlamref{i^+}{i^-}{j^-}{k^-} +\frac{1}{6}\lamlam{i^+}{i^-}{j^-}{k^-} \biggr) \\
&+\sum_{j\prec k\prec l\in L^t_i}\biggl(\frac{1}{6}\lamlamref{i^+}{j^-}{k^-}{l^-} +\frac{1}{6}\lamlam{i^+}{j^-}{k^-}{l^-}\biggr)
+\sum_{j\in L^b_i,\, k\in L^t_i}\frac{1}{4}\lamlamref{i^+}{i^-}{j^-}{k^-} \\
&+\sum_{j\in L^b_i}\biggl(\strutbottom{i^-}{j^-} -\frac{1}{2}\lambdagraph{0.4}{i^+}{i^-}{j^-} +\frac{1}{12}\Hgraph{i^+}{i^+}{i^-}{j^-} +\frac{1}{12}\lamlamref{i^+}{i^-}{j^-}{j^-} -\frac{1}{8}\Hbottom{i^-}{i^-}{j^-}{j^-} +\frac{1}{8}\dumbbell{i^-}{j^-}\biggr) \\
&+\sum_{j\prec k\in L^b_i}\biggl(-\frac{1}{2}\Ybottom{i^-}{j^-}{k^-} +\frac{1}{6}\lamlam{i^+}{i^-}{j^-}{k^-} +\frac{1}{6}\lamlamref{i^+}{i^-}{j^-}{k^-} +\frac{1}{12}\comb{i^-}{j^-}{j^-}{k^-} +\frac{1}{12}\Hbottom{i^-}{j^-}{k^-}{k^-} \biggr) \\
&+\sum_{j\prec k\prec l\in L^b_i}\biggl(\frac{1}{6}\comb{i^-}{j^-}{k^-}{l^-} +\frac{1}{6}\Hbottom{i^-}{j^-}{k^-}{l^-}\biggr) \Biggr)
+\sum_{(j,k)\in E}\biggl(-\strutbottom{j^-}{k^-}+\frac{1}{8}\dumbbell{j^-}{k^-}\biggr).
\end{align*}
Using this result, we now compute $(-1)^{\# E}\left((\Id_g\otimes(\emptyset - \Ztilde(Y))^{\otimes n}_{n})\circ \Ztilde_{2}(\Psi\circ \gamma)\right)^Y$, where the superscript $Y$ denotes the projection appearing in Section~\ref{subsec:Jacobi}.
Since
\[
(\Id_g\otimes(\emptyset - \Ztilde(Y))^{\otimes n}_{n})\circ \Ztilde_{2}(\Psi\circ \gamma)
= (\Id_g\otimes \Ztilde_1(Y)^{\otimes n})\circ \Ztilde_{2}(\Psi\circ \gamma)
\]
and $\Ztilde_1(Y)^{\otimes n}$ does not have repeated labels,
it suffices to consider Jacobi diagrams in $\Ztilde_{2}(\Psi\circ \gamma)$ which do not have the same labels in $\{j^-\mid j\in N\}$.
Therefore, in $\A_{n+2}^Y\otimes\Q/\tfrac{1}{2}\Z$, the above value is equal to
\begin{align*}
&\sum_{\{u,v\}} \frac{1}{4}\delta^{Y}_{u}(\delta^{Y}_{v}(J)) 
+\sum_{v\in U^+} \biggl(\frac{1}{4}\delta^{+}_{v}(J) +\frac{1}{12}\delta^{-}_{v}(J)\biggr) 
+\sum_{\substack{\{\{u,v\},\{u',v'\}\}\\ u\prec v,\, u'\prec v'}} \frac{1}{4}\lambda_{u,v}(\lambda_{u',v'}(J)) 
\\
&+\sum_{\substack{u,v,w\in U\\ u\prec v}} \frac{1}{4}\lambda_{u,v}(\delta^{Y}_{w}(J)) 
+\sum_{u\prec v\in U^+} \biggl(\frac{1}{4}H_{u,v}(J) +\frac{1}{6}H'_{u,v}(J)\biggr)
+\sum_{u\prec v\prec w\in U^+} \frac{1}{6}\lambda_{u,v,w}(J) \\
&+\sum_{\substack{\{u,v\}\\ \ell(u)=\ell(v)^\ast}} \frac{1}{4}H_{u,v}(J) 
+\sum_{v\in U^-}\frac{1}{12}\delta^{+}_{v}(J) 
+\sum_{v\in U^-}\frac{1}{8}\beta_{e(v)}(J) 
+\sum_{u\prec v\in U^-}\frac{1}{6}H'_{v,u}(J) \\
&+\sum_{u\prec v\prec w\in U^-} \frac{1}{6}\lambda_{u,v,w}(J) 
+\sum_{\text{$e$: internal edge}}\frac{-1}{8}\beta_e(J). 
\end{align*}
Here, $\frac{1}{4}\delta^{Y}_{u}(\delta^{Y}_{v}(J))$ is obtained from two $-\frac{1}{2}\lambdagraph{0.4}{i^+}{i^-}{k^-}$, 
$\frac{1}{4}\delta^{+}_{v}(J)$ from $\frac{1}{4}\Hgraph{i^+}{i^+}{i^-}{k^-}$, 
$\frac{1}{12}\delta^{-}_{v}(J)$ from $\frac{1}{12}\lamlam{i^+}{i^-}{i^-}{k^-}$, 
$\frac{1}{4}\lambda_{u,v}(\lambda_{u',v'}(J))$ from two $-\frac{1}{2}\lambdagraph{0.4}{i^\pm}{j^-}{k^-}$, 
$\frac{1}{4}\lambda_{u,v}(\delta^{Y}_{w}(J))$ from $-\frac{1}{2}\lambdagraph{0.4}{i^\pm}{j^-}{k^-}$ and $-\frac{1}{2}\lambdagraph{0.4}{i^+}{i^-}{k^-}$, 
$\frac{1}{4}H_{u,v}(J)$ from $\frac{1}{4}\Hgraph{i^\pm}{i^+}{j^-}{k^-}$, 
$\frac{1}{6}H'_{u,v}(J)$ from $\frac{1}{4}\lamlamref{i^+}{i^-}{j^-}{k^-}$ and four other similar terms, 
$\frac{1}{6}\lambda_{u,v,w}(J)$ from $\frac{1}{6}\lamlamref{i^\pm}{j^-}{k^-}{l^-} +\frac{1}{6}\lamlam{i^\pm}{j^-}{k^-}{l^-}$, 
$\frac{1}{8}\beta_{e(v)}(J)$ from $\frac{1}{8}\dumbbell{i^-}{j^-}$, 
$\frac{-1}{8}\beta_e(J)$ from $\frac{1}{8}\dumbbell{j^-}{k^-}$.

Similarly, we compute $(-1)^{\# E}\left((\Id_g\otimes(\emptyset - \Ztilde(Y))^{\otimes n}_{n+1})\circ \Ztilde_{1}(\Psi\circ \gamma)\right)^Y$:
\begin{align*}
&\sum_{Y} \frac{1}{4}\delta^Y(J\sqcup Y)
+\sum_{Y} \frac{1}{4}\lambda(J\sqcup Y) \\
&+\sum_{u\in U(J),\, v\in U(\delta_u^{\shortmid\shortmid}(J))} \frac{1}{4}\delta_v^Y(\delta_u^{\shortmid\shortmid}(J))
+\sum_{\substack{u\in U(J),\, v,w\in U(\delta_u(J))\\ U(J)\cap \{v,w\}\neq \emptyset}} \frac{1}{4}\lambda_{v,w}(\delta^{\shortmid\shortmid}_{u}(J)).
\end{align*}

Finally, we compute $(-1)^{\# E}\left((\Id_g\otimes(\emptyset - \Ztilde(Y))^{\otimes n}_{n+2})\circ \Ztilde_{0}(\Psi\circ \gamma)\right)^Y$.
Note that $\Ztilde_{\leq 3}(Y)$ is given by $\exp_\sqcup$ of diagrams in Lemma~\ref{lem:Ycob}.
Let
\[
\delta^{YY}_{u,v}\Bigl(
\begin{tikzpicture}[scale=0.3, baseline={(0,0.4)}, densely dashed]
  \draw (1,0) -- (1,2);
  \draw (1,2) -- (0,3) node[anchor=south] {\Small $\ell(u)$};
  \draw (1,2) -- (2,3) node[anchor=south] {\Small $\ell(v)$};
  \draw (0,0) -- (2,0);
\end{tikzpicture}\Bigr)
=
\begin{tikzpicture}[scale=0.3, baseline={(0,0.4)}, densely dashed]
  \draw (1,0) -- (1,2);
  \draw (1,2) -- (0,3) node[anchor=south] {\Small $\ell(u)$};
  \draw (1,2) -- (2,3) node[anchor=south] {\Small $\ell(v)$};
  \draw (5,0) -- (5,2);
  \draw (5,2) -- (4,3) node[anchor=south] {\Small $\ell(u)$};
  \draw (5,2) -- (6,3) node[anchor=south] {\Small $\ell(v)$};
  \draw (0,0) -- (6,0);
\end{tikzpicture},
\quad
\delta^{\shortmid\shortmid\shortmid'}_{u,v}\Bigl(\Ygraph{0.4}{\ell(u)}{\ell(v)}{}\Bigr)
=
\begin{tikzpicture}[scale=0.4, baseline={(0,0)}, densely dashed]
  \draw (0,-1) -- (0,1) node[anchor=south] {\Small $\ell(u)$};
  \draw (2,0) -- (2,1) node[anchor=south] {\Small $\ell(v)$};
  \draw (4,0) -- (4,1) node[anchor=south] {\Small $\ell(u)$};
  \draw (0,0) -- (5,0) node[anchor=north] {\Small $\ell(v)$};
\end{tikzpicture}.
\]
Then, in $\A_{n+2}^Y\otimes\Q/\tfrac{1}{2}\Z$, $(-1)^{\# E}\left((\Id_g\otimes(\emptyset - \Ztilde(Y))^{\otimes n}_{n+2})\circ \Ztilde_{0}(\Psi\circ \gamma)\right)^Y$ equals
\begin{align*}
&\sum_{Y}\frac{1}{3!}J\sqcup Y^{\sqcup 2}
+\sum_{\text{$\{u,v\}$: leaf pair}} \frac{-1}{2}\delta^{YY}_{u,v}(J)
+\sum_{Y}\frac{1}{4}\delta^{\shortmid\shortmid}(J\sqcup Y)
+\sum_{\text{$\{u,v\}$: non-leaf pair}} \frac{1}{4}\delta^{\shortmid\shortmid}_{u}(\delta^{\shortmid\shortmid}_{v}(J)) 
\\
&+\sum_{v\in U}\frac{1}{6}\delta^{\shortmid\shortmid\shortmid}_{v}(J) 
+\sum_{\text{$\{u,v\}$: leaf pair}}\frac{-1}{4}\delta^{\shortmid\shortmid\shortmid'}_{u,v}(J)
+\sum_{\text{$e$: internal edge}}\frac{1}{8}\beta_e(J),
\end{align*}
where the second term is obtained by connecting two $-\Ytop{j^+}{k^+}{l^+}$ and $\frac{1}{2}\Htop{p^+}{p^+}{q^+}{r^+}$ with two $-\strutbottom{l^-}{p^-}$, and the last term is obtained by connecting $\frac{1}{2}\Htop{j^+}{j^+}{k^+}{l^+}$ and $\frac{1}{2}\Htop{p^+}{p^+}{q^+}{r^+}$ with two $-\strutbottom{j^-}{p^-}$ and by the AS and IHX relations.
Furthermore, it follows from the AS and IHX relations that
\[
\sum_{\text{$\{u,v\}$: non-leaf pair}} \frac{1}{4}\delta^{\shortmid\shortmid}_{u}(\delta^{\shortmid\shortmid}_{v}(J)) 
+\sum_{\text{$\{u,v\}$: leaf pair}} \frac{-1}{4}\delta^{\shortmid\shortmid\shortmid'}_{u,v}(J)
=
\sum_{\{u,v\}} \frac{1}{4}\delta^{\shortmid\shortmid}_{u}(\delta^{\shortmid\shortmid}_{v}(J)).
\]
Combining the three computations above, we obtain the desired formula.
\end{proof}

\section{Computation of the group $Y_n\I\C/Y_{n+1}$}
\label{sec:Yn/Yn+1}
In this section, we investigate the abelian group $Y_n\I\C/Y_{n+1}$ for $n=5,6,7$.
More precisely, we give the proofs of Theorems~\ref{thm:Y6/Y7} and \ref{thm:deg-by-deg} in Sections~\ref{subsec:Y6/Y7} and \ref{subsec:Y7/Y8}, respectively.

\subsection{Computation of $Y_5\I\C/Y_6$}
\label{subsec:Y5/Y6}
This subsection is devoted to giving an upper bound and lower bound of the size of $\tor(Y_5\I\C/Y_6)$.

\begin{proposition}
Let $g$ be a non-negative integer.
Then, the abelian group $\tor(Y_5\I\C/Y_6)$ is isomorphic to $(\Z/2\Z)^r$ for some $r$ satisfying
\[
4g^3+6g^2 \leq r \leq 4g\binom{2g+1}{3}+4g^3+6g^2.
\]
\end{proposition}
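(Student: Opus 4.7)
My plan is as follows. The surgery map $\ss_5\colon \A^c_5 \to Y_5\I\C/Y_6$ is surjective and, by \cite[Theorem~7.11]{CHM08}, becomes an isomorphism after tensoring with $\Q$, so $\Ker\ss_5 \subset \tor\A^c_5$ and hence $\tor(Y_5\I\C/Y_6) = \tor\A^c_5/\Ker\ss_5$. The first task is to verify that $\tor\A^c_5$ is annihilated by $2$; this is a combinatorial statement about uni-trivalent graphs with five trivalent vertices modulo AS, IHX, and self-loop, and can be checked by a case analysis on the isomorphism type of the underlying graph, noting that the $3$-torsion phenomenon appearing in $\A^c_6$ that powers Theorem~\ref{thm:Y6/Y7} does not arise for internal degree $5$. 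This immediately yields $\tor(Y_5\I\C/Y_6)\cong(\Z/2\Z)^r$ for some $r\geq 0$, and it remains to bound $r$ from both sides.

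For the upper bound, I would compute $\dim_{\Z/2\Z}\tor\A^c_5$ using the loop-degree decomposition $\A^c_5 = \bigoplus_{l=0}^{3}\A^c_{5,l}$: for each $l$ list the finitely many isomorphism types of connected uni-trivalent graphs of internal degree $5$ and first Betti number $l$, enumerate labelings of the univalent vertices by $\{1^\pm,\dots,g^\pm\}$, and identify which classes become $2$-torsion modulo the AS and IHX relations (essentially those labelings that are fixed up to sign by an automorphism of the underlying graph). Summing the contributions is expected to yield the total $4g\binom{2g+1}{3}+4g^3+6g^2$, and since $\tor(Y_5\I\C/Y_6)$ is a quotient of $\tor\A^c_5$, this bounds $r$ from above.

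For the lower bound, I would use the homomorphism $\bar{z}_6\colon Y_5\I\C/Y_6\to\A^c_6\otimes\Q/\Z$ together with the explicit clasper-surgery formula for $\bar{z}_6\circ\ss_5$ from \cite[Theorem~1.1]{NSS22GT}, parallel to the arguments carried out for $n=3$ in \cite{NSS22GT} and for $n=4$ in \cite{NSS22JT}. The plan is to exhibit an explicit family of $4g^3+6g^2$ representatives of $\tor\A^c_5$, most likely drawn from the tree and $1$-loop summands and indexed by short colored tuples in $\{1^\pm,\dots,g^\pm\}$, and to verify via the formula that the resulting elements of $\A^c_6\otimes\Q/\Z$ are $\Z/2\Z$-linearly independent, forcing $r\geq 4g^3+6g^2$.

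The main obstacle will be combinatorial bookkeeping at both ends. On the upper-bound side, one has to handle the increasingly symmetric graphs populating the higher-loop summands $\A^c_{5,2}$ and $\A^c_{5,3}$ carefully, since these are precisely the types that produce $2$-torsion. On the lower-bound side, the formula of \cite[Theorem~1.1]{NSS22GT} mixes different graph types, so showing $\Z/2\Z$-linear independence of the $\bar{z}_6$-images of the chosen family requires tracking these contributions carefully. The residual gap of $4g\binom{2g+1}{3}$ between the two bounds precisely measures the $2$-torsion in $\A^c_5$ (presumably concentrated in the loopier part) that the invariants $\bar{z}_6$ and $\zz_7$ available to us cannot detect; closing it would demand genuinely finer invariants.
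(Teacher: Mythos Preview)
Your overall strategy---show $\tor\A^c_5$ is a $\Z/2\Z$-vector space, bound $r$ above by its dimension, and bound $r$ below via $\bar{z}_6$---is the same as the paper's. But the upper-bound step contains a real error.

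You assert that ``summing the contributions is expected to yield the total $4g\binom{2g+1}{3}+4g^3+6g^2$'' for $\dim_{\Z/2\Z}\tor\A^c_5$. In fact the paper computes (using \cite[Corollary~1.2]{CST12L}, \cite[Proposition~5.2]{NSS22GT}, \cite[Lemma~4.4]{NSS22JT}, and \cite[Lemma~5.30]{CDM12}) that
\[
\tor\A^c_{5,0}\cong (H\otimes L_3)\otimes\Z/2\Z,\quad
\tor\A^c_{5,1}\cong H^{\otimes 3}\otimes\Z/2\Z,\quad
\tor\A^c_{5,2}\cong H^{\otimes 2}\otimes\Z/2\Z,\quad
\A^c_{5,3}=0,
\]
so $\dim_{\Z/2\Z}\tor\A^c_5 = 4g\binom{2g+1}{3}+8g^3+4g^2$, which is strictly larger than the stated upper bound. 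The missing ingredient is that $\Ker\ss_{5,1}\cong(\Z/2\Z)^{4g^3-2g^2}$, established in \cite[Theorem~1.1 and Remark~3.18]{NSS22JT}; subtracting this from $\dim_{\Z/2\Z}\tor\A^c_5$ is what produces the bound in the proposition. Without invoking that known piece of $\Ker\ss_5$, your argument only gives the weaker $r\le 4g\binom{2g+1}{3}+8g^3+4g^2$.

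A secondary misconception: you say the residual gap is ``presumably concentrated in the loopier part'', but the term $4g\binom{2g+1}{3}$ is exactly $\dim_{\Z/2\Z}\tor\A^c_{5,0}$---it comes entirely from trees. Correspondingly, for the lower bound the paper does not use tree diagrams at all; it uses $1$-loop diagrams $O(a,b,c,b,a)$ (via $\bar{z}_{6,1}$, contributing $4g^3+2g^2$) together with $2$-loop diagrams $\theta(a,a;;b)$ (via $\bar{z}_{6,3}$, contributing $(2g)^2$). Your plan to draw the family ``from the tree and $1$-loop summands'' would likely run into trouble, since $\bar{z}_6$ on tree torsion lands in $\A^c_{6,1}\otimes\Q/\Z$ and linear independence there is not what the paper establishes.
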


\begin{proof}
Since the homomorphism $\ss_5\colon \A^c_5 \to Y_5\I\C/Y_6$ is surjective and an isomorphism over $\Q$, we have $(\tor\A^c_5)/\Ker\ss_5 \cong \tor(Y_5\I\C/Y_6)$.
To investigate the left-hand side, we investigate the abelian group $\A^c_5=\bigoplus_{l=0}^{3} \A^c_{5,l}$.
We first have $\tor\A^c_{5,0} \cong (H\otimes L_3)\otimes\Z/2\Z$ by \cite[Corollary~1.2]{CST12L} whose rank is $2g\frac{1}{3}((2g)^3-2g) = 4g\binom{2g+1}{3}$ by Witt's formula for (see \cite[Theorem~5.11]{MKS04} for example).
Next, \cite[Proposition~5.2]{NSS22GT} shows $\tor\A^c_{5,1} \cong H^{\otimes 3}\otimes\Z/2\Z$ and \cite[Lemma~4.4]{NSS22JT} implies $\tor\A^c_{5,2} \cong \tor\A^c_{1,0} \cong H^{\otimes 2}\otimes\Z/2\Z$.
Finally, we have $\A^c_{5,3}=0$ by \cite[Lemma~5.30]{CDM12}.
Thus, $\rank(\tor\A^c_5) = 4g\binom{2g+1}{3}+(2g)^3+(2g)^2$.

Let us give the upper bound.
We have 
\[
\Ker\ss_{5,1} = \Ker(\pi\circ\ss_{5,1}) \cong (\Z/2\Z)^{4g^3-2g^2},
\]
where the second isomorphism is in \cite[Theorem~1.1]{NSS22JT}, and the first equality comes from \cite[Remark~3.18]{NSS22JT}.
Hence, $r\leq \rank(\tor\A^c_5)-(4g^3-2g^2)$ as desired.
To give the lower bound, we estimate the size of the image of $\bar{z}_6\colon \tor(Y_5\I\C/Y_6)\to \A^c_6\otimes\Q/\Z$.
It follows from the proof of \cite[Theorem~1.1]{NSS22JT} that elements $\bar{z}_{6,1}(\ss_6(O(a,b,c,b,a))) = \frac{1}{2}O(a,b,c,c,b,a)$ generate a submodule of rank $4g^3+2g^2$.
\cite[Theorem~1.1]{NSS22GT} shows $\bar{z}_{6,3}(\ss_6(\theta(a,a;;b))) = \bu^{(2)}(O(a,b))$ and these elements generate a submodule of rank $(2g)^2$ by the proof of Proposition~\ref{prop:A6_free} in the next subsection, where $\bu^{(2)}$ is a map $\A^c_{2,1}\to \A^c_{6,3}$ defined in \cite[Definition~4.1]{NSS22JT}.
Therefore, $r\geq 4g^3+2g^2+(2g)^2$.
\end{proof}

\begin{remark}
To determine the above $r$ exactly, we would need to investigate $\Ker\ss_{5,0}$.
\end{remark}

\subsection{Computation of $Y_6\I\C/Y_7$}
\label{subsec:Y6/Y7}
Here, we use Theorem~\ref{thm:formula} to prove Theorem~\ref{thm:Y6/Y7} which asserts that $\tor(Y_6\I\C/Y_7)$ is generated by torsion elements of order $3$.

Recall that clasper surgery induces an exact sequence
\[
0\to \Ker\ss_6 \to \A^c_{6}\xrightarrow{\ss_6} Y_6\I\C/Y_7 \to 0.
\]
We compute the composite map
\[
\A^c_{6,2} \xrightarrow{\ss_6} Y_6\I\C/Y_7 \xrightarrow{\zz_{8,4}} \A^c_{8,4}\otimes\Q/\tfrac{1}{2}\Z.
\]

Let $\A^c_{n,l}(a_1,\dots,a_m)$ denote the submodule of $\A^c_{n,l}$ generated by Jacobi diagrams whose labels are precisely $a_1,\dots,a_m$.
For instance, $\A^c_{1,0}(a,a,b)$ is isomorphic to $\Z/2\Z$ generated by $T(a,a,b)$ for $a,b\in\{1^\pm,\ldots,g^\pm\}$.
We recall from \cite[Section~4.1]{NSS22JT} that the \emph{spine} of a Jacobi diagram $J$ is defined to be the graph obtained by collapsing edges incident to univalent vertices until there is no univalent vertex.

\begin{lemma}
\label{lem:spine_simple}
When $l\geq 3$, the module $\A^c_{n,l}$ is generated by Jacobi diagrams whose spines are simple graphs.
\end{lemma}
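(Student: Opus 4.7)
The plan is to eliminate non-simple features of the spine---namely self-loops and multi-edges---using the IHX, AS, and self-loop relations, and to show that this reduction terminates with simple spines when $l \geq 3$. The bound $l \geq 3$ is sharp, since a connected trivalent graph with $b_1 = l$ has $V = 2(l-1)$ vertices and $E = 3(l-1)$ edges, leaving no room for a simple graph when $l \in \{1,2\}$.

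The first step is to remove self-loops. A self-loop in the spine at a vertex $v$ corresponds in $J$ to a closed internal path $v = u_0 - u_1 - \cdots - u_k = v$ passing through trivalent vertices with tree branches attached that get collapsed in the spine. When $k = 1$ the self-loop relation gives $J = 0$. When $k \geq 2$ I apply the IHX relation at the edge $u_0 u_1$ with respect to the two other half-edges at $u_1$, so that one of the two resulting summands has a closed path of length $k - 1$ at $v$, while the other replaces the loop-edge at $v$ by a tree branch and so has no self-loop at $v$ at all. Iterating on $k$ eliminates the self-loop at $v$.

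The second step is to remove multi-edges, where the hypothesis $l \geq 3$ plays its crucial role: the spine then has $V = 2(l-1) \geq 4$ vertices, so there is a spine vertex $w$ distinct from the endpoints $v_1, v_2$ of a given multi-edge. A multi-edge of multiplicity $\mu \geq 2$ between $v_1$ and $v_2$ corresponds in $J$ to $\mu$ internally disjoint chords between $v_1$ and $v_2$; applying IHX at a suitable edge on one of these chords re-routes it through the region of the spine containing $w$, reducing the multiplicity to $\mu - 1$ while preserving $b_1 = l$. Combining the two steps by induction on the lexicographic pair (number of spine self-loops, number of spine multi-edges) expresses $J$ as a $\Z$-linear combination of Jacobi diagrams with simple spines. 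The main obstacle is ensuring that the IHX move used to break a multi-edge does not introduce further complications that the self-loop elimination cannot undo, which I would address by always choosing the IHX edge on a chord of minimal length between $v_1$ and $v_2$, so that the only possible new self-loop has length one and is killed outright by the self-loop relation.
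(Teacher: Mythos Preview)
Your strategy matches the paper's (kill spine self-loops first, then multi-edges, via IHX and the self-loop relation), but the specific IHX moves you choose do not give the claimed reductions. In the self-loop step, write the four half-edges at the ends of $u_0u_1$ as $a$ (towards $u_{k-1}$) and $b=e$ (external) at $u_0=v$, and $c$ (towards $u_2$) and $d=h_1$ (the hair at $u_1$). The IHX term pairing $(a,c)\mid(b,d)$ indeed has a loop of length $k-1$, but the other term pairs $(a,d)\mid(b,c)$: the vertex carrying $b$ and $c$ now sits on the cycle through $u_2,\dots,u_{k-1}$, giving a spine self-loop of the \emph{same} $J$-length $k$ (the other new vertex is suppressed in the spine because its third edge is the hair $h_1$). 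So your induction on $k$ stalls. The paper avoids this by applying IHX at the \emph{external} spine edge $e$ (after first sliding all hairs off it so that it becomes a single $J$-edge), which converts the self-loop into a multi-edge and creates no new self-loop.

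The multi-edge step has the same flaw. If you apply IHX to a length-one chord $a$ between $v_1$ and $v_2$ (with $b$ the other chord and $c_1,c_2$ the external edges at $v_1,v_2$), then one IHX term joins both ends of $b$ at a single vertex, producing a spine self-loop along $b$---not of length one, since it is $b$ and not $a$ that forms the loop, so your proposed fix does not apply---while the other term has the two new vertices joined both by the IHX edge and by $b$, hence still a multi-edge. Neither term decreases your lexicographic complexity. The paper instead performs IHX at the external spine edge $(u,u')$, which genuinely separates the two parallel edges when $u'\neq v'$; the exceptional configuration $u'=v'$ (where a naive move would recreate a self-loop) is singled out and handled with two IHX moves in succession.
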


\begin{proof}
First note that a graph is said to be \emph{simple} if it contains no self-loop and no multiple edge.
Let $J$ be a Jacobi diagram whose spine contains self-loops.
Here, the assumption implies that the spine is a connected trivalent graph with at least four vertices.
For any self-loop, let $e$ be the edge (in the spine) connecting the loop and the rest.
All edges (of $J$) attached to $e$ can be moved to the rest by the IHX relation.
Then, we eliminate the loop by applying the IHX relation to $e$.
Applying this process to every self-loop, we express $J$ as a linear combination (over $\Z$) of Jacobi diagrams $J'$ whose spines have no self-loops.

Now, the spine of $J'$ could have multiple edges.
Let $e_1$ and $e_2$ be multiple edges connecting vertices $u$ and $v$.
Let $u'$ (resp.\ $v'$) be the vertex adjacent to $u$ (resp.\ $v$) different from $v$ (resp.\ $u$).
In the case of $u'\neq v'$, one can eliminate the multiple edges by the IHX relation for $(u,u')$ without creating new multiple edges and self-loops.
In the case of $u'= v'$, using the IHX relation twice, we eliminate the multiple edges as follows:
\[
\begin{tikzpicture}[scale=0.6, baseline={(0,0.3)}, densely dashed]
 \draw (0,0) circle [radius=1];
 \draw (-1,0) -- (1,0) node[at start, anchor=east] {\Small $u$} node[at end, anchor=west] {\Small $v$};
 \draw (0,2) -- (0,1) node[anchor=south west] {\Small $u'=v'$};
 \draw (-1,2) -- (1,2);
 \draw[dotted] (-1,2) -- (-2,2);
 \draw[dotted] (1,2) -- (2,2);
 \node at (0,0.3) {\Small $e_1$};
 \node at (0,-0.7) {\Small $e_2$};
\end{tikzpicture}%
\quad \leadsto \quad
\begin{tikzpicture}[scale=0.6, baseline={(0,-0.1)}, densely dashed]
 \draw (0,0) circle [radius=1];
 \draw (0,-1) -- (0,1);
 \draw (1,0) -- (2,0);
 \draw (-1,0) -- (-2,0);
 \draw[dotted] (-2,0) -- (-3,0);
 \draw[dotted] (2,0) -- (3,0);
\end{tikzpicture}\ .
\]
This completes the proof.
\end{proof}

\begin{proposition}
\label{prop:A6_free}
$\A^c_{6,l}$ is a free $\Z$-module unless $l=2$.
\end{proposition}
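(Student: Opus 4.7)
The plan is to handle the decomposition $\A^c_6 = \bigoplus_{l=0}^4 \A^c_{6,l}$ one Betti number at a time, treating the four cases $l \in \{0, 1, 3, 4\}$ separately. This is the natural organization because the excluded value $l = 2$ is precisely where the $3$-torsion of $\A^c_6$ lives (cf.\ Remark~\ref{rem:Ishikawa} and the subsequent proof of Theorem~\ref{thm:Y6/Y7}), and the other four strata have qualitatively different geometric descriptions that call for different techniques.

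For $l = 0$ (trees, with $u = 8$ univalent leaves), I would invoke the Conant--Schneiderman--Teichner classification of torsion in the tree module via \cite[Corollary~1.2]{CST12L}. This is already used in Section~\ref{subsec:Y5/Y6} to produce the $2$-torsion $(H\otimes L_3)\otimes\Z/2\Z$ in $\tor\A^c_{5,0}$; the relevant feature is that such $2$-torsion appears only when $\ideg$ is odd, so since $\ideg = 6$ is even, $\A^c_{6,0}$ is torsion-free. For $l = 1$ (one-loop, with $u = 6$), I would use the presentation of $\A^c_{n,1}$ as a quotient of $H^{\otimes n}$ by the signed action of the dihedral group $D_n$ coming from the AS relation, together with the IHX relations on loop edges. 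For $n = 6$ a direct orbit-stabilizer count (comparable in spirit to the $l = 1$ argument in \cite[Proposition~5.2]{NSS22GT}) matches the $\Q$-dimension and rules out $2$-torsion, yielding a free module.

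For $l = 3$ and $l = 4$ I would apply Lemma~\ref{lem:spine_simple} to reduce to Jacobi diagrams whose spines are simple graphs. The simple connected trivalent graphs on $4$ vertices (relevant for $l = 3$, where $u = 2$ legs are attached) consist only of $K_4$, while those on $6$ vertices (relevant for $l = 4$, with $u = 0$) are just $K_{3,3}$ and the triangular prism $Y_3$. For each such spine the automorphism group is explicit, so the generators of $\A^c_{6,l}$ modulo the AS relation and residual IHX relations among simple-spined diagrams can be enumerated by hand, in each case producing either the zero module or an explicit free abelian group of computable rank.

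The main obstacle will be the $l = 1$ case: the signed $D_6$-action on $H^{\otimes 6}$ can in principle introduce $2$-torsion from monomials stabilized by an orientation-reversing element (a reflection, or rotations of orders $2, 3, 6$ that combine with the AS signs), so a careful orbit analysis is required to conclude that the coinvariants form a free $\Z$-module. By contrast, the $l = 3, 4$ cases are combinatorially tractable once Lemma~\ref{lem:spine_simple} has cut down the generating set, and the $l = 0$ case is essentially an appeal to existing literature.
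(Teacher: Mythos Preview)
Your overall organization is the same as the paper's: split by loop number, cite \cite[Corollary~1.2]{CST12L} for $l=0$, appeal to \cite[Proposition~5.2]{NSS22GT} (or redo its orbit argument) for $l=1$, and use Lemma~\ref{lem:spine_simple} for $l\in\{3,4\}$.  You also correctly identify the possible simple spines ($K_4$ for $l=3$; the triangular prism and $K_{3,3}$ for $l=4$; the paper calls the former $\bu(\theta)$ and the latter $\bu^{(2)}(\theta)$).

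The genuine difference is in how freeness is established for $l=3,4$.  The paper first pushes the IHX reduction further, showing that $\A^c_{6,4}$ is cyclic on $\bu^{(2)}(\theta)$ and that $\A^c_{6,3}(a,b)$ is cyclic on $\bu^{(2)}(O(a,b))$, and then certifies that these generators have infinite order by evaluating the $\sl_2(\mathbb{C})$ weight system $W_{\sl_2(\mathbb{C})}$ (obtaining $-6$ and a nonzero tensor, respectively).  Your plan is instead to enumerate all AS/IHX relations among simple-spined diagrams and read off freeness directly, essentially a Smith-normal-form computation.  Both routes succeed here; the weight-system argument is shorter and more conceptual once the module has been made cyclic, while your approach is self-contained but requires tracking more relations by hand.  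If you follow your route, you should still record the intermediate reduction to a single generator in each case, since without it the enumeration of relations is noticeably larger.
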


\begin{proof}
We consider $l=0,1,3,4$ since $\A^c_{6,l}$ is trivial for $l\geq 5$.
The cases $l=0,1$ follow from \cite[Corollary~1.2]{CST12L} and \cite[Proposition~5.2]{NSS22GT}, respectively.
Next, we consider $\A^c_{6,4}$ which is a module generated by Jacobi diagrams with no univalent vertex.
By Lemma~\ref{lem:spine_simple}, it suffices to consider simple trivalent graphs with $6$ vertices, which are either the $1$-skeleton of a triangular prism $\bu^{(2)}(\theta)$ or the complete bipartite graph $K_{3,3}$, where $\theta$ denotes the theta graph.
The latter is changed into the former by the IHX relation, and thus $\A^c_{6,4}$ is generated by $\bu^{(2)}(\theta)$.
Here $\bu^{(2)}(\theta)$ is of infinite order since $W_{\sl_2(\mathbb{C})}(\bu^{(2)}(\theta))=-6$, where $W_{\sl_2(\mathbb{C})}$ is the weight system associated with the Lie algebra $\sl_2(\mathbb{C})$.
For details on weight systems, we refer the reader to \cite[Definition~6.2 and Example~6.3]{NSS22JT} or \cite[Section~6.3]{CDM12}.

Finally, we discuss $\A^c_{6,3}$.
By Lemma~\ref{lem:spine_simple}, $\A^c_{6,3}(a,b)$ is generated by $\bu(\theta)$ attached with two hairs whose vertices are colored with $a$ and $b$, respectively, where a hair is an edge incident to one univalent vertex.
Moreover, the IHX relation implies that $\A^c_{6,3}(a,b)$ is generated by $\bu^{(2)}(O(a,b))$.
Here, it is of infinite order since
\[
W_{\sl_2(\mathbb{C})}(\bu^{(2)}(O(a,b)))=-2\sum_{i=1}^3(a\otimes e_i)(b\otimes e_i)
\]
is non-trivial.
\end{proof}

Recall the notation $\theta(a_1,\dots,a_p;b_1,\dots,b_q;c_1,\dots,c_r)$ introduced in Theorem~\ref{thm:deg-by-deg}.

\begin{proposition}
\label{prop:A6_torsion}
For $a,b\in\{1^\pm,\ldots,g^\pm\}$, $\A^c_{6,2}(a,a,a,b)$ is isomorphic to $\Z/3\Z\oplus \Z$ and generated by $\theta(a,b;a;a)$ and $\theta(a,b;;a,a)$.
\end{proposition}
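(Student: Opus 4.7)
The plan is to enumerate the theta-spined generators of $\A^c_{6,2}(a,a,a,b)$ and to use the IHX relation to extract the module structure.

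First, I reduce to the theta spine. By an argument parallel to the proof of Lemma~\ref{lem:spine_simple}, every dumbbell-spined Jacobi diagram (the only other connected trivalent graph on two vertices, as counted by $b_1=2$) can be rewritten as a combination of theta-spined ones by first using IHX to push hairs off each self-loop and then applying the self-loop relation. Hence $\A^c_{6,2}(a,a,a,b)$ is generated by theta-type diagrams $\theta(\vec a;\vec b;\vec c)$ whose four hair labels form the multiset $\{a,a,a,b\}$.

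Next, I enumerate these generators modulo the AS relation and the $S_3\times\Z/2\Z$ symmetry permuting the three edges of the theta graph and swapping its two trivalent vertices. Running through the four unordered partitions of $4$ into three nonnegative parts, together with the placement of the single $b$ among the hairs, yields a short explicit list that includes both named generators $\theta(a,b;;a,a)$ and $\theta(a,b;a;a)$.

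Then, I apply the IHX relation to each internal edge of each enumerated diagram. Each application expresses one theta-spined diagram as a difference of two others (possibly after reducing auxiliary dumbbell spines appearing in the $H$ or $X$ term back to theta spines by Step 1). Assembling these linear relations shows that every generator reduces to a combination of $\theta(a,b;;a,a)$ and $\theta(a,b;a;a)$, and that a specific combination of IHX applications, reflecting the three-fold symmetry available because the three copies of $a$ in $\theta(a,b;a;a)$ lie one per edge of the theta graph, yields the additional relation $3\,\theta(a,b;a;a)=0$. Finally, the infinite order of $\theta(a,b;;a,a)$ is confirmed by evaluating a weight system such as $W_{\sl_2(\mathbb C)}$, in the spirit of the proof of Proposition~\ref{prop:A6_free}.

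The main obstacle is the third step: systematically deriving and solving the IHX relations while carefully tracking the AS signs under the $S_3\times\Z/2\Z$ symmetry, and isolating the combination of IHX applications whose coefficients sum to $3$ rather than to $2$ (the parity that governs the previously known torsion in $Y_n\I\C/Y_{n+1}$). Although each individual IHX application is mechanical, the combinatorial bookkeeping is the heart of the argument.
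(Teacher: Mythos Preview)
Your plan is the same as the paper's in outline---reduce to the theta spine, enumerate the diagrams up to AS and the symmetry of $\theta$, and harvest IHX relations---but as written it has a gap at the end.

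From your Step~3 you obtain that the module is \emph{generated} by $\theta(a,b;;a,a)$ and $\theta(a,b;a;a)$ together with the relation $3\,\theta(a,b;a;a)=0$, i.e.\ a surjection $\Z\oplus\Z/3\Z\twoheadrightarrow\A^c_{6,2}(a,a,a,b)$. Your weight-system check certifies only that $\theta(a,b;;a,a)$ has infinite order; it says nothing about whether $\theta(a,b;a;a)$ is actually nonzero. So you have not excluded the possibility that the module is just $\Z$. A weight system with values in a characteristic~$0$ algebra cannot detect the $3$-torsion you need here, so this final step cannot be repaired by tweaking the choice of $W$.

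There are two ways to close the gap, and the paper uses both. For $a=b$ it writes down the \emph{complete} list of IHX relations among the four AS-classes $\theta(a,a,a,a;;)$, $\theta(a,a,a;a;)$, $\theta(a,a;a,a;)$, $\theta(a,a;a;a)$ as an explicit $4\times 3$ integer matrix and computes its Smith normal form $\mathrm{diag}(1,1,3)$; this is a genuine presentation, so the result $\Z/3\Z\oplus\Z$ follows with no external input (and the weight-system step is unnecessary). For $a\neq b$ the paper avoids redoing the larger enumeration: it observes that the label-change $b\mapsto a$ gives a surjection $\A^c_{6,2}(a,a,a,b)\twoheadrightarrow\A^c_{6,2}(a,a,a,a)\cong\Z/3\Z\oplus\Z$, while generation by two elements together with $3\,\theta(a,b;a;a)=0$ already gives the reverse inequality, so the surjection is an isomorphism. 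If you want to keep your uniform treatment, you must make explicit that the IHX relations you list exhaust all relations (i.e.\ produce the presentation matrix and its Smith normal form), not merely exhibit one relation among them.
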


\begin{proof}
By \cite[Proposition~4.2]{NSS22JT}, it suffices to consider the theta graph.
Let us first discuss the case $a=b$.
Under the AS relation, every Jacobi diagram in $\A^c_{6,2}(a,a,a,a)$ is equivalent to one of $\theta(a,a,a,a;;)$, $\theta(a,a,a;a;)$, $\theta(a,a;a,a;)$, or $\theta(a,a;a;a)$.
Considering all the relations among these four elements coming from the IHX (and AS) relations such as $\theta(a,a,a,a;;)+2\theta(a,a,a;a;)=0$,
we obtain a presentation of the module $\A^c_{6,2}(a,a,a,a)$ and its Smith normal form:
\[
\begin{pmatrix}
1&0&0\\
2&1&0\\
0&1&0\\
0&-1&3
\end{pmatrix}
\leadsto
\begin{pmatrix}
1&0&0\\
0&1&0\\
0&0&3\\
0&0&0
\end{pmatrix}.
\]
This implies that $\A^c_{6,2}(a,a,a,a)\cong \Z/3\Z \oplus \Z$, which is generated by $\theta(a,a;a;a)$ and $\theta(a,a;;a,a)=\theta(a,a;a,a;)$.

Next, let us discuss the case $a\neq b$.
One can check that $\A^c_{6,2}(a,a,a,b)$ is generated by $\theta(a,b;a;a)$ and $\theta(a,b;;a,a)$.
On the other hand, we have a surjective homomorphism $\A^c_{6,2}(a,a,a,b)\twoheadrightarrow \A^c_{6,2}(a,a,a,a)$ defined by replacing $b$ with $a$.
Since $3\theta(a,b;a;a)=0$ by the AS and IHX relations, the homomorphism must be an isomorphism.
\end{proof}

\begin{remark}
\label{rem:Ishikawa}
Katsumi Ishikawa informed the first author about the existence of torsion elements rather than $2$-torsions, and the above explicit elements were found by the authors.
In particular, he announced that $\tor\A^c_{6,2}(a,a,a,a)\cong \Z/3\Z$.
\end{remark}

\begin{remark}
More generally, for $a_1,\dots,a_k,b \in \{1^\pm,\dots,g^\pm\}$, it holds that
\[
3\theta(a_1,\dots,a_k,b; a_1,\dots,a_k; a_1,\dots,a_k) =0 \in \A^c_{3k+3,2}
\]
by the AS and IHX relations.
\end{remark}

\begin{remark}
By \cite[Theorem~1.3]{NSS22JT}, we have an isomorphism
\[
\bu\colon \A^c_{4,1}\to \A^c_{6,2}/\ang{\Theta_6^{\geq 1}}.
\]
Here recall from \cite[Proposition~5.2]{NSS22GT} that $\A^c_{4,1}(a,a,a,b)\cong\Z$.
As a corollary of Proposition~\ref{prop:A6_torsion}, $\bu$ induces
\[
\A^c_{4,1}(a,a,a,b)\cong \A^c_{6,2}(a,a,a,b)/{\tor}.
\]
\end{remark}

By a computer-aided calculation, we can obtain a presentation of the module $\A^c_{6,2}(a,b,c,d)$ and its Smith normal form in much the same way as the proof of Proposition~\ref{prop:A6_torsion}.
As a consequence, we obtain the following.

\begin{proposition}
\label{prop:A6_rest}
Suppose any three of $a,b,c,d\in\{1^\pm,\ldots,g^\pm\}$ are not the same.
Then $\A^c_{6,2}(a,b,c,d)$ is a free $\Z$-module.
\end{proposition}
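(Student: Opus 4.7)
The plan is to follow, case by case, the strategy used in the proof of Proposition~\ref{prop:A6_torsion}. By \cite[Proposition~4.2]{NSS22JT}, every class in $\A^c_{6,2}(a,b,c,d)$ is represented by a Jacobi diagram whose spine is the theta graph $\theta$, i.e.\ by a copy of $\theta$ carrying four hairs whose univalent endpoints are labeled by $a,b,c,d$. Under the hypothesis that no label appears three or more times, the multiset $\{a,b,c,d\}$ falls into three cases: all four labels distinct; two of one label and two further distinct labels, $\{a,a,b,c\}$; and two pairs, $\{a,a,b,b\}$. In each case I will enumerate the finitely many theta-with-hairs diagrams, that is, all ways of distributing the four hairs onto the three edges of $\theta$ together with an ordering of the hairs along each edge, viewed modulo the AS relations at the six trivalent vertices.

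Next, I will list the IHX relations among these generators. Exactly as in the proof of Proposition~\ref{prop:A6_torsion}, they come in two flavors: IHX applied at an internal edge joining two adjacent hair attachments, which slides a hair past its neighbor along the same edge of $\theta$, and IHX applied at an edge joining a hair attachment to a base vertex of $\theta$, which moves a hair across a base vertex and thereby redistributes it among the three edges of $\theta$. Assembling all these relations into a presentation matrix, in the style of the $4\times 3$ matrix appearing in the proof of Proposition~\ref{prop:A6_torsion}, and computing its Smith normal form will yield the abelian group structure of $\A^c_{6,2}(a,b,c,d)$.

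The main obstacle is computational: in the case of four distinct labels the number of ordered hair configurations is considerably larger than in Proposition~\ref{prop:A6_torsion}, so the Smith normal form is best produced by a computer algebra system rather than by hand. Conceptually, however, the $\Z/3\Z$ summand found in Proposition~\ref{prop:A6_torsion} was forced by the threefold symmetry among the three edges of $\theta$ permuting identical hair configurations, together with the AS and IHX relations at the base vertices of $\theta$; this mechanism requires at least three hairs sharing a common label. Under the present hypothesis that no label occurs at least three times, the analogous relations degenerate into several independent identifications rather than a single cyclic constraint, so I expect the invariant factors of the Smith normal form to consist only of $0$'s and $1$'s, whence $\A^c_{6,2}(a,b,c,d)$ is free.
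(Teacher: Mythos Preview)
Your proposal takes essentially the same approach as the paper: reduce to theta-spine diagrams via \cite[Proposition~4.2]{NSS22JT}, enumerate the generators and IHX relations, and compute the Smith normal form by computer. The paper's proof is nothing more than the assertion that this computer-aided calculation was carried out, so your plan matches it exactly; just be aware that your final paragraph is a heuristic for why one should \emph{expect} freeness, not a proof, and the actual verification still rests on running the Smith normal form computation.
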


\begin{figure}[h]
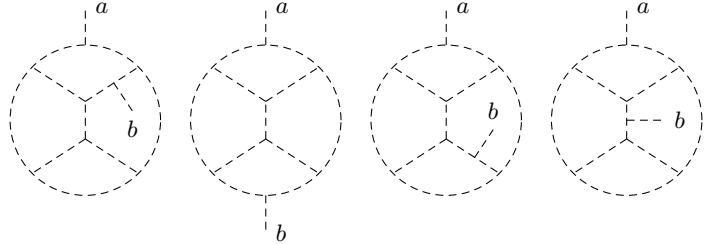

 \centering
$
\Pone{a}{b}
\quad
\Ptwo{a}{b}
\quad
\Pthree{a}{b}
\quad
\Pfour{a}{b}
$
 \caption{Four Jacobi diagrams denoted by $P_k(a,b)$ ($k=1,2,3,4$).}
 \label{fig:A84}
\end{figure}

\begin{proposition}
\label{prop:A84}
For $a,b\in\{1^\pm,\ldots,g^\pm\}$, $\A^c_{8,4}(a,b)$ is a free abelian group with basis $\{P_1(a,b),P_2(a,b)\}$ \textup{(}see Figure~\ref{fig:A84}\textup{)}.
\end{proposition}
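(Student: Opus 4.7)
The plan is to first invoke Lemma~\ref{lem:spine_simple}, valid since $l=4\geq 3$, to reduce to connected Jacobi diagrams whose spine is a simple connected trivalent graph. Collapsing the two edges containing the univalent vertices and smoothing the resulting bivalent vertices shows that such a spine must have $6$ trivalent vertices and $9$ edges. The only simple connected trivalent graphs on $6$ vertices are the triangular prism and the complete bipartite graph $K_{3,3}$. Applying the IHX relation along an edge of $K_{3,3}$ (exactly as in the proof of Proposition~\ref{prop:A6_free}) produces a prism, so $\A^c_{8,4}(a,b)$ is generated by diagrams obtained from the prism by subdividing two of its edges and attaching an $a$-hair and a $b$-hair at the new vertices.

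Next I would enumerate the finite list of candidate generators by classifying pairs of hair positions on the prism modulo its automorphism group (of order $12$), together with the AS relation. The prism has two edge-orbits, the three rungs joining the two triangles and the six rim edges forming the triangles, so pairs of subdivision points fall into a manageable number of cases, including configurations with both hairs placed on a common edge. This produces a finite candidate list containing $P_1(a,b), P_2(a,b), P_3(a,b), P_4(a,b)$ along with possibly a few others. The crucial step is then to apply IHX relations on the prism with the hairs present, in order to express every such generator, and in particular $P_3(a,b)$ and $P_4(a,b)$, as a $\Z$-linear combination of $P_1(a,b)$ and $P_2(a,b)$. I anticipate this combinatorial reduction to be the main obstacle, since each IHX rewrite may both reposition the hairs and reintroduce $K_{3,3}$ subgraphs that must be re-eliminated; a presentation-plus-Smith-normal-form computation, in the spirit of Propositions~\ref{prop:A6_torsion} and \ref{prop:A6_rest} and likely computer-assisted, should yield the complete set of relations and identify the quotient with $\Z^2$.

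Finally, to confirm the $\Z$-linear independence of $P_1(a,b)$ and $P_2(a,b)$, I would evaluate the $\sl_2(\mathbb{C})$ weight system $W_{\sl_2(\mathbb{C})}$ on each and check that the two resulting elements span a rank-$2$ submodule; this is a finite Lie-algebraic computation for each fixed pair of colors and handles both the case $a\neq b$ and the case $a=b$. Combining the three steps proves that $\{P_1(a,b), P_2(a,b)\}$ is a $\Z$-basis of $\A^c_{8,4}(a,b)$.
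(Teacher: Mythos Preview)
Your reduction to the prism via Lemma~\ref{lem:spine_simple} and the elimination of $K_{3,3}$ is exactly what the paper does, and your Step~2 is a more cautious version of the paper's argument: the paper simply lists $P_1,\dots,P_4$ as the generators and records the relations $-P_1(a,b)+P_2(a,b)=P_3(a,b)$, $P_4(a,b)=0$, $P_k(a,b)=P_k(b,a)$ directly, without a Smith normal form computation. So no computer assistance is needed there.

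The genuine gap is in your Step~3. The $\sl_2(\mathbb{C})$ weight system cannot separate $P_1(a,b)$ and $P_2(a,b)$: a Jacobi diagram with exactly two univalent vertices is sent by $W_{\mathfrak{g}}$ into $(\mathfrak{g}\otimes\mathfrak{g})^{\mathfrak{g}}$, and for any simple Lie algebra this invariant subspace is one-dimensional (spanned by the Casimir). Hence $W_{\sl_2}(P_1)$ and $W_{\sl_2}(P_2)$ are automatically proportional, and the weight system can detect at most rank~$1$. The same obstruction applies to $\sl_N$ for every $N$, so passing to a larger simple Lie algebra does not help.

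The paper circumvents this by first \emph{closing up} the diagram: gluing the $a$-leg to the $b$-leg defines a homomorphism $\A^c_{8,4}(a,b)\to\A^c_{8,5}$ into the space of connected closed trivalent diagrams with $8$ vertices, which by \cite[Table~7.1]{CDM12} has rank~$2$ over~$\Q$. Since $P_1$ and $P_2$ already generate $\A^c_{8,4}(a,b)$, the gluing map is surjective over~$\Q$ and hence an isomorphism over~$\Q$; linear independence of $P_1,P_2$ over~$\Z$ follows. If you want to stay with weight systems, you can recover this by gluing first and then applying $W_{\sl_N}$ to the resulting closed diagrams as a polynomial in~$N$; but applying $W_{\sl_2}$ directly to the open diagrams, as you propose, will not work.
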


\begin{proof}
In the same way as the proof of Proposition~\ref{prop:A6_free}, we see that $\A^c_{8,4}$ is generated by $\bu^{(2)}(\theta)$ attached with two hairs, that is, the Jacobi diagrams listed in Figure~\ref{fig:A84}.
One can see that 
\[
-P_1(a,b)+P_2(a,b)=P_3(a,b),\ P_4(a,b)=0,\text{ and } P_k(a,b)=P_k(b,a)
\]
for $k=1,2,3$, and hence $\A^c_{8,4}(a,b)$ is generated by $P_1(a,b)$ and $P_2(a,b)$.
On the other hand, we have a homomorphism $\A^c_{8,4}(a,b) \to \A^c_{8,5}$ by gluing two univalent vertices.
According to \cite[Table~7.1]{CDM12}, this map induces an isomorphism over $\Q$.
Therefore, $P_1(a,b)$ and $P_2(a,b)$ are linearly independent over $\Z$.
\end{proof}

\newcommand{\lambdaex}{
\begin{tikzpicture}[scale=0.4, baseline={(0,0)}, densely dashed]
 \draw (0,0) circle [radius=2];
 \draw (-2,0)--(2,0);
 \draw (135:2) .. controls +(-1,1) and +(0,-1) .. (-1,4);
 \draw (0,0)--(0,4);
 \draw (0,-2) .. controls +(0.4,0.4) and +(2,0) .. (0,4);
 \draw (0,4)--(-2,4) node[anchor=east] {\Small $a$};
 \draw (30:2)--(30:3) node[anchor=south] {\Small $b$};
\end{tikzpicture}}
\newcommand{\lambdarefex}{
\begin{tikzpicture}[scale=0.4, baseline={(0,0)}, densely dashed]
 \draw (0,0) circle [radius=2];
 \draw (-2,0)--(2,0);
 \draw (0,-2) .. controls +(1,1.7) and +(0,-1) .. (1,4);
 \draw (0,0)--(0,4);
 \draw (135:2) .. controls +(-1,1) and +(-2,0) .. (0,4);
 \draw (0,4)--(2,4) node[anchor=west] {\Small $a$};
 \draw (30:2)--(30:3) node[anchor=south] {\Small $b$};
\end{tikzpicture}}
\newcommand{\Pfive}[2]{
\begin{tikzpicture}[scale=0.25, baseline={(0,-0.2)}, densely dashed]
 \draw (0,0) circle [radius=4];
 \draw (0,-1)--(0,1);
 \draw (0,1)--(45:4);
 \draw (0,1)--(135:4);
 \draw (0,-1)--(-45:4);
 \draw (0,-1)--(-135:4);
 \draw (4,0)--(6,0) node[anchor=south] {\Small $#2$};
 \draw (0,4)--(0,6) node[anchor=west] {\Small $#1$};
\end{tikzpicture}}

\begin{proof}[Proof of Theorem~\ref{thm:Y6/Y7}]
We first recall that $\ss\colon \A^c_6 \to Y_6\I\C/Y_7$ is surjective and induces an isomorphism over $\Q$.
It follows from Propositions~\ref{prop:A6_free}, \ref{prop:A6_torsion}, and \ref{prop:A6_rest} that $\tor(Y_6\I\C/Y_7)$ is isomorphic to $(\Z/3\Z)^r$ for some $r$ satisfying
\[
r\leq \rank_{\Z/3\Z}(\tor\A^c_{6}) = (2g)^2 = 4g^2.
\]
Let us show $r\geq \binom{2g}{2}$ by the map
\[
\zz_{8,4}\circ\ss_6\colon \A^c_{6,2}\to \A^c_{8,4}\otimes\Q/\tfrac{1}{2}\Z.
\]
Since $\A^c_{8,4} = \bigoplus_{a,b}\A^c_{8,4}(a,b)$, if $(\zz_{8,4}\circ\ss_6)(\theta(a,b;a;a))\neq 0$ is shown for distinct $a,b\in \{1^\pm,\dots,g^\pm\}$, then we conclude that $r\geq \binom{2g}{2}$.
By Theorem~\ref{thm:formula}, we have
\begin{align*}
& (\zz_{8,4}\circ\ss_6)(\theta(a,b;a;a))
= (-1)^{2+1}\sum_{u\prec v\prec w\in U} \frac{1}{6}\lambda_{u,v,w}(\theta(a,b;a;a)) \\
&= \frac{1}{6}\biggl(\lambdaex +\ \lambdarefex\biggr)
\\
&= \frac{1}{6}\biggl(
-
\begin{tikzpicture}[scale=0.4, baseline={(0,0)}, densely dashed]
 \draw (0,0) circle [radius=2];
 \draw (-2,0)--(2,0);
 \draw (135:2) .. controls +(-1,1) and +(0,-1) .. (-1,4);
 \draw (0,0)--(225:2);
 \draw (0,-2) .. controls +(0.4,0.4) and +(2,0) .. (0,4);
 \draw (0,4)--(-2,4) node[anchor=east] {\Small $a$};
 \draw (30:2)--(30:3) node[anchor=south] {\Small $b$};
\end{tikzpicture}
+
\begin{tikzpicture}[scale=0.4, baseline={(0,0)}, densely dashed]
 \draw (0,0) circle [radius=2];
 \draw (-2,0)--(2,0);
 \draw (135:2) .. controls +(-1,1) and +(0,-1) .. (-1,4);
 \draw (0,0)--(-45:2);
 \draw (0,-2) .. controls +(0.4,0.4) and +(2,0) .. (0,4);
 \draw (0,4)--(-2,4) node[anchor=east] {\Small $a$};
 \draw (30:2)--(30:3) node[anchor=south] {\Small $b$};
\end{tikzpicture}
-
\begin{tikzpicture}[scale=0.4, baseline={(0,0)}, densely dashed]
 \draw (0,0) circle [radius=2];
 \draw (-2,0)--(2,0);
 \draw (0,-2) .. controls +(1,1.7) and +(0,-1) .. (1,4);
 \draw (0,0)--(0,2);
 \draw (135:2) .. controls +(-1,1) and +(-2,0) .. (0,4);
 \draw (0,4)--(2,4) node[anchor=west] {\Small $a$};
 \draw (30:2)--(30:3) node[anchor=south] {\Small $b$};
\end{tikzpicture}
+\ 
\begin{tikzpicture}[scale=0.4, baseline={(0,0)}, densely dashed]
 \draw (0,0) circle [radius=2];
 \draw (-2,0)--(2,0);
 \draw (0,-2) .. controls +(1,1.7) and +(0,-1) .. (1,4);
 \draw (0,0)--(150:2);
 \draw (120:2) .. controls +(-1,1) and +(-2,0) .. (0,4);
 \draw (0,4)--(2,4) node[anchor=west] {\Small $a$};
 \draw (30:2)--(30:3) node[anchor=south] {\Small $b$};
\end{tikzpicture}
\biggr).
\end{align*}
The first term cancels with the fourth term, and the other two terms are equal to
\begin{align*}
\Pfive{a}{b}-\ \Ptwo{a}{b}
\ = -P_3(a,b)
= P_1(a,b)-P_2(a,b).
\end{align*}
Here, Proposition~\ref{prop:A84} implies that $\frac{1}{6}(P_1(a,b)-P_2(a,b))\neq 0$ in $\A^c_{8,4}\otimes\Q/\tfrac{1}{2}\Z$.
This completes the proof.
\end{proof}

\begin{remark}
The authors do not know whether $\theta(a,b;a;a)-\theta(b,a;b;b) \in \Ker\ss_6$ or not.
\end{remark}

\subsection{Computation of $Y_7\I\C/Y_8$ and $\Ker\ss_{n,l}$}
\label{subsec:Y7/Y8}
Let us prove that the inclusion $\bigoplus_{l\geq 0}\Ker\ss_{7,l} \subset \Ker\ss_{7}$ is strict.
A key of the proof is a homomorphism $\bar{z}_{8}\colon Y_7\I\C/Y_8 \to \A^c_8\otimes\Q/\Z$.

\begin{lemma}
\label{lem:A_8_2}
For distinct $a,b\in \{1^\pm,\dots,g^\pm\}$, the diagram $\theta(a;a,a;a,b,a)$ is a primitive element in $\A^c_{8,2}$.
\end{lemma}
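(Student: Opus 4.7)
The plan is to interpret primitivity as the assertion that $\theta(a;a,a;a,b,a)$ generates an infinite cyclic direct summand of $\A^c_{8,2}$; equivalently, that it is of infinite order and is not a proper integer multiple of any element. Since the multiset of colours $\{a,a,a,a,a,b\}$ grades $\A^c$, it suffices to prove primitivity inside the direct summand $M=\A^c_{8,2}(a,a,a,a,a,b)$.

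First I would enumerate a spanning set of $M$. Any connected Jacobi diagram with $b_1=2$ has, after contracting all degree-two vertices of its spine, a theta graph as its spine (two trivalent vertices joined by three edges). Hence every generator of $M$ is of the form $\theta(\sigma_1;\sigma_2;\sigma_3)$, where the $\sigma_i$ are sequences whose concatenation is a permutation of $(a,a,a,a,a,b)$. Modulo the $\Sigma_3$-action permuting strands and the AS relation reversing each $\sigma_i$ (with a sign), the possible hair distributions $(|\sigma_1|,|\sigma_2|,|\sigma_3|)$ are $(6,0,0),(5,1,0),(4,2,0),(4,1,1),(3,3,0),(3,2,1),(2,2,2)$, and the unique $b$-hair can be placed in each slot.

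Next I would record all IHX relations: each internal edge of the theta spine carrying at least one hair yields an IHX reduction expressing one diagram as a signed sum of two others. Combined with the strand-permutation and strand-reversal identifications, this furnishes a finite presentation of $M$. Computing the Smith normal form of the resulting presentation matrix, in the style of the computations underlying Propositions~\ref{prop:A6_torsion} and \ref{prop:A6_rest}, should exhibit a direct summand isomorphic to $\Z$ with $\theta(a;a,a;a,b,a)$ appearing with invariant factor $1$, which proves the claim.

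The main obstacle is combinatorial bookkeeping: the number of generators and of IHX relations is moderately large, and the calculation is most conveniently carried out by computer. A cleaner alternative would be to construct an explicit homomorphism $\varphi\colon M \to \Z$ with $\varphi(\theta(a;a,a;a,b,a))=\pm 1$, for instance by evaluating a weight system such as $W_{\sl_2(\mathbb{C})}$ with a carefully chosen specialisation $H_1(\Sigma_{g,1};\Z)\to\sl_2(\mathbb{C})$ designed so that only the target diagram contributes a unit; whether this succeeds depends sensitively on how the weight system interacts with the IHX-reduced basis of $M$, so I would keep the Smith normal form route as the default and use weight systems only as a sanity check.
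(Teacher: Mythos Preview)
Your proposal is correct and follows essentially the same route as the paper: both restrict to the direct summand $\A^c_{8,2}(a,a,a,a,a,b)$, use that the spine must be a theta graph, and then verify primitivity by a computer-aided presentation computation. The only difference is one of efficiency: the paper first argues by hand, via AS and IHX, that every diagram can be rewritten in the form $\theta(\ast;\ast;a,b,a)$, so that only the two generators $\theta(a;a,a;a,b,a)$ and $\theta(a,a,a;;a,b,a)$ remain, and then checks by computer that these form a $\Z$-basis; you instead propose to feed the full list of theta diagrams and IHX relations into a Smith normal form computation.
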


\begin{proof}
By the AS and IHX relations, each Jacobi diagram in $\A^c_{8,2}(a,a,a,a,a,b)$ is expressed as a linear combination of diagrams of the form $\theta(\ast;\ast;a,b,a)$.
Therefore, $\A^c_{8,2}(a,a,a,a,a,b)$ is generated by $\theta(a;a,a;a,b,a)$ and $\theta(a,a,a;;a,b,a)$.
Moreover, by \cite[Proposition~4.2]{NSS22JT} and a computer program, we check that the two elements form a basis over $\Z$.
\end{proof}

\begin{proof}[Proof of Theorem~\ref{thm:deg-by-deg}]
It follows from \cite[Corollary~3.17]{NSS22JT} that the sum
\[
O(a,a,a,b,a,a,a)+O(b,a,a,a,a,a,b)+\theta(a;a;a,b,a)+\theta(a,a,a;a;b)
\]
lies in $\Ker\ss_7$.
Hence, it suffices to see that
\[
O(a,a,a,b,a,a,a)+O(b,a,a,a,a,a,b) \notin \Ker\ss_7.
\]
Its image under the map
\[
Y_7\I\C/Y_8 \xrightarrow{\bar{z}_{8,2}} \A^c_{8,2}\otimes\Q/\Z \xrightarrow{\pr} \A^c_{8,2}(a,a,a,a,a,b)\otimes\Q/\Z
\]
is equal to
\begin{align}
\frac{1}{2}\theta(a;;a,a,b,a,a) +\frac{1}{2}\theta(a,a;a;a,b,a) +\frac{1}{2}\theta(a,a,a,a;a;b) +\frac{1}{2}\theta(a,a,a,a,a;;b)
\label{eq:aaaaab}
\end{align}
by \cite[Theorem~1.1]{NSS22GT}.
The sum of the first two terms equals $\frac{1}{2}\theta(a,a,a;;a,b,a)$ by the AS and IHX relations.
In a similar way, we see that \eqref{eq:aaaaab} is equal to $\frac{1}{2}\theta(a;a,a;a,b,a)$.
Thus, Lemma~\ref{lem:A_8_2} completes the proof.
\end{proof}

\begin{remark}
The proof answers negatively to the question in \cite[Remark~3.18]{NSS22JT}.
In much the same way, for $g\geq 2$ and distinct colors $a_1,a_2,a_3,a_4 \in \{1^\pm,\dots,g^\pm\}$, we can show that 
\[
\hspace{-2em}
O(a_1,a_2,a_3,a_4,a_3,a_2,a_1)+O(a_4,a_3,a_2,a_1,a_2,a_3,a_4)+\theta(a_1;a_2;a_3,a_4,a_3)+\theta(a_2,a_1,a_2;a_3;a_4)
\]
lies in the gap of $\bigoplus_{l\geq 0}\Ker\ss_{7,l} \subset \Ker\ss_{7}$.
\end{remark}

\def\cprime{$'$} \def\cprime{$'$} \def\cprime{$'$}

\end{document}